\newtheorem{thm}{Theorem}
\newtheorem{prop}{Proposition}
\newtheorem{lem}{Lemma}
\newtheorem{cor}{Corollary}
\newtheorem{defn}{Definition}
\newtheorem{example}{Example}
\newtheorem{rem}{Remark}
\renewcommand{\Re}{\operatorname{Re}}
\numberwithin{equation}{section}
\title{Observability for Schr\"odinger equations with quadratic Hamiltonians}
\author[A. Waters]{Alden Waters}
\address{ University of Groningen, Bernoulli Institute,
Nijenborgh 9,
9747 AG Groningen,
The Netherlands} \email{a.m.s.waters@rug.nl}
\begin{document}

\begin{abstract}
We consider time dependent harmonic oscillators and construct a parametrix to the corresponding Schr\"odinger equation using Gaussian wavepackets. This parametrix of Gaussian wavepackets is precise and tractable. Using this parametrix we prove $L^2$ and $L^2\textendash L^{\infty}$ observability estimates on unbounded domains $\omega$ for a restricted class of initial data. This data includes a class of compactly supported piecewise $C^1$ functions which have been extended from characteristic functions. Initial data of this form which has the bulk of its mass away from $\omega^c=\Omega$, a connected bounded domain, is observable, but data centered over $\Omega$ must be very nearly a single Gaussian to be observable. We also give counterexamples to established principles for the simple harmonic oscillator in the case of certain time dependent harmonic oscillators. 
\end{abstract}

\maketitle

\textbf{Keywords:} control theory, Schr\"odinger equations, observability \\

\textbf{MSC classes:} 35R01, 35R30, 35L20, 58J45, 35A22

\section{Introduction}
We start by recalling some results for bounded domains. Let $D$, $D_0$ be bounded smooth subdomains of $\mathbb{R}^d$ with $D_0\subset D$. We consider the controlled linear Schr\"odinger equation 
\begin{align}\label{Sbounded}
&i\partial_tu(t,x)=-\Delta u(t,x)+V(t,x)u(t,x)+f(t,x)\mathbf{1}_{D_0}(x) \quad \mathrm{in}\quad (0,T)\times D \\& \nonumber
u(t,x)=0 \quad\mathrm{on}\quad (0,T)\times\partial D\\& \nonumber
u_0(x)=u(0,x) \quad \mathrm{in}\quad D \nonumber
\end{align}
here $u(t,x)$ is the \textit{state}, $f(t,x)$ is the \textit{control}, and $V(t,x)$ is the \textit{potential}. All can be complex functions. It is known that the system is well-posed in $L^2(D)$ with controls in $L^2((0,T)\times D_0)$ for $L^{\infty}((0,T)\times D)$ potentials. 

Starting with \cite{lebeau} it was shown that provided the Hamiltonian flow and the observation set satisfies the Geometric Control Condition, then the solution to the Schr\"odinger equation \eqref{Sbounded} with an $L^{\infty}$ potential is controllable to another state $u_T=u(T,x)$. Later using Carleman estimates, \cite{LT, bp} and \cite{LTZ, LTZ2} reduced the assumptions on the domain for $L^{\infty}$ time independent and time dependent potentials, respectively,  c.f. \cite{zuazua} for a survey of such results.  If a solution is observable, then often by duality the solution to the adjoint equation is controllable \cite{lions}. Interior observability typically amounts to a solution $u$ satisfying an estimate of the form 
\begin{align*}
\|u_0\|_{L^2(D)}\leq C_T\|u\|_{L^2((0,T)\times D_0)}
\end{align*}
for some nonzero $C_T$, $0<T<\infty$ ($f=0$). Most of the references mentioned here establish observability estimates.

While much is known about the observability problem for Schr\"odinger equations on a bounded domain with  $L^{\infty}$ potentials, much less is known for general operators in the free space, that is when \eqref{Sbounded} has $D$ replaced with $\mathbb{R}^d$ and $D_0$, the support of the control, is replaced with $\omega=\mathbb{R}^d\setminus\overline{\Omega}$ where $\Omega$ is a bounded domain. Correspondingly, one wants to  establish observability for 
\begin{align}\label{hamq}
&i\partial_tu=\left(-\kappa_1(t)\Delta+\kappa_2(t)|x|^2\right)u \quad 0<t\leq T,\quad x\in\mathbb{R}^d\\& \nonumber
u(0,x)=u_0(x).
\end{align}
where $\kappa_1\in C^1([0,T]), \kappa_2\in C([0,T])$, and $u(0,x)$ is real valued and in a weighted $L^2(\mathbb{R}^d)$ space. This amounts to establishing a bound for the solution
of \eqref{hamq} as 
\begin{align*}
\|u_0\|_{L^2(\mathbb{R}^d)}\leq C_T\|u\|_{L^2((0,T)\times \omega)}
\end{align*}
which is challenging as $\omega$ is no longer bounded. While it seems that this would be an easier problem because the background space is $\mathbb{R}^d$, the potential itself is also not compactly supported. The Schr\"odinger operator in the free space with a non-compactly supported potential behaves much differently.  The only results for control and observability for this problem in arbitrary dimensions are in \cite{jkarel} for the case of the simple harmonic oscillator $\kappa_1(t)=\kappa_2(t)=1/2$.  Our goal is then to establish observability for a more general class of time-dependent harmonic oscillators. We end up establishing an approximate observability theorem for a limited class initial data $u_0$, and an observability estimate for some examples of 'Gaussian-like' initial data. Approximate observability here means that $u(t,x)$ satisfies
\begin{align}\label{etao}
\|u_0\|_{L^2(\mathbb{R}^d)}-\eta\leq C_T(\|u\|_{L^2((0,T)\times \omega)}+T\eta)
\end{align}
where $\eta>0$ can be made very small. This error term has to do with the fact that translated real Gaussians are dense in $L^1(\mathbb{R}^d)$ and hence can approximate $L^2\cap L^{\infty}(\mathbb{R}^d)$ functions, but are not a basis for either space. Whenever $\eta$ depending on the $L^{\infty}(\mathbb{R}^d)$ norm of $u_0$ is small enough that the $\eta$ 'errors' in \eqref{etao} are absorbed into the $L^2(\mathbb{R}^d)$ norm of $u_0$ we call this an $L^2\textendash L^{\infty}(\mathbb{R}^d)$ observability estimate. 

 The underlying idea here is the use of sophisticated Gaussian wave packets. We prove an explicit approximation theorem which decomposes $f\in L^2(\mathbb{R}^d,e^{x^2})$ into a finite sum of Gaussians up to some quantifiable error, which is based on \cite{calc}. Thus there are two main goals here, one is to construct a nearly explicit parametrix using this decomposition and the other is to prove some localization properties of solutions to the Schr\"odinger equation in the form of observability estimates. While a least squares approximation may require fewer terms with a higher degree of accuracy, we give an explicit characterization of the decomposition needed to create a completely explicit parametrix with a high degree of accuracy. This parametrix in Theorem \ref{parametrix} was largely inspired by the Fourier integral operators developed in \cite{karel}. The derivation of the theorem constructing the parametrix is based on properties of Hermite functions and polynomials.  
  
%Unfortunately because  our proof techniques for observability require positivity of the coefficients of the sums of the Gaussians in the original decomposition, we are not able to characterize all the data which is observable. This assumption is strong, but otherwise some difficult Fresnel integrals must be analyzed since the propagated Gaussians have complex phase functions. However there are no known previous results for time dependent operators with both $\kappa_1(t)$ and $\kappa_2(t)$ time dependent, and because both coefficients depend on time in some examples they are not reducible to previous case via change of variables. The author is also unaware of any explicit parametrices in the literature for such operators. 
 
To see why this is important, we remark that \emph{any} quantum mechanical system with a potential energy $V(x)$ has local equilibrium points which can be analysed by the model for a quadratic Hamiltonian, \eqref{hamq} with $\kappa_1(t)=\kappa_2(t)=1/2$. In other words, Taylor expanding $V(x)$ around the point $x_0$ gives:
\begin{align*}
V(x)\approx V(x_0)+\nabla V(x_0)(x-x_0)+\frac{1}{2}\nabla^2V(x_0)(x-x_0)^2.
\end{align*}
If $x_0$ is a critical point, then the second term vanishes. Translating $x_0$ to zero we obtain:
\begin{align*}
V(x)\approx V(0)+\frac{1}{2}\nabla^2V(0)x^2,
\end{align*}
and we see that the model is reduced to the one of the harmonic oscillator. The Hamiltonian associated to this operator is $\frac{1}{2}(|p|^2+|x|^2)$. The Hamiltonian ray path for this operator is computed by solving the system of ODE's
\begin{align*}
\frac{dx(t)}{dt}=p(t) \quad \frac{dp(t)}{dt}=-x(t).
\end{align*}
The key point is that this operator has a cusp at zero, e.g. one point for which the Hamiltonian ray path is such that $\frac{dx(t)}{dt}=0$ which can affect the validity of a Gaussian wavepacket construction. Making sense of this phenomenon and the regions where a solution to a much more general time-dependent classical problem \eqref{hamq} are concentrated is one of the goals of this paper. The examples presented here all represent toy models of time dependent metrics and potentials $V(t,x)$ in the free space background case around spatial equilibrium. Establishing parametrices and observability for such potentials is key to understanding the behavior of free space quantum mechanical waves. 

The use of Gaussian wavepackets has been around since the work of \cite{hagedorn} and \cite{cordobafefferman}. For the semi-classical Schr\"odinger equation in the free space
\begin{align} \label{semiclass}
&ih\partial_tu=(-h^2\Delta+V)u \quad \mathrm{in} \quad (0,T)\times \mathbb{R}^d\\& \nonumber
u(0,x)=u_0(x)
\end{align}
where $u_0\in C^{\infty}(\mathbb{R}^d)$, the leading order term of $u$ up to a high degree of accuracy in $h$ has been shown to take the form of a Gaussian Fourier Integral Operator (FIO). This description holds for very general time-dependent Hamiltonians see \cite{robert1, robert2, robert3, Liu} and also \cite{laptevsigal}, and finite times $T$. However for the classical Schr\"odinger equation the situation is a bit different as we do not have the added advantage of calculating errors in terms of the scale of the semi-classical parameter $h$. For control theory results in the semi-classical case, we direct the reader to \cite{Macia1, Macia2, Macia3}. 

We still have the property in the classical case presented here that the FIOs which are constructed for the non-autonomous problem are Gaussian distributions when the Hamiltonian is quadratic c.f. \cite{karel} for a full treatment. Hence the reduction to the model in \eqref{hamq}.  A Gaussian FIO applied to a Gaussian function is again, another Gaussian, which motivates our choice of approximation. The FIO construction now allows us to create a parametrix solution to the Schr\"odinger equation consisting of a finite collection of tractable propagated wavepackets whose properties, while technical, are not impossible to describe. Some same principles from the compact case related to the Geometric Control Condition still carry over. Moreover, because the representation used here is very close to the classical Hermite functions, the analysis of inner products on $L^2(\omega)$ spaces has a direct correlation to spectral theory analysis. Using separation of variables it is usually desirable to prove $\langle\psi_n,\psi_m\rangle_{L^2(\omega)}$ is small in order that the $\|\psi_n\|^2_{L^2(\omega)}$ terms dominate for some $\psi_n$ a spectral basis for the underlying space ($\mathbb{R}^d$, in this case and $\omega=\mathbb{R}^d\setminus \overline{\Omega}$ with $\Omega$ a bounded domain). We see the analogue of this idea directly for the dynamic wavepackets in Lemma \ref{lowerinner} and the proof of Theorem \ref{main3} in the text. Unfortunately this means we have to restrict the class of initial data for the proofs. 

What the theorems here say is that initial data which can be approximated by Gaussians (including some piecewise $C^1$ compactly supported functions which have been extended from characteristic functions) are $L^2\textendash L^{\infty}(\mathbb{R}^d)$ observable far away from the 'hole', $\Omega$ and only initial data which is very nearly a Gaussian is observable when the support of the bulk of the 'mass' ($L^2(\mathbb{R}^d)$ norm) of the initial data sits over the 'hole'.  This makes sense intuitively as initial data which is Gaussian propagates as another Gaussian. Far away from the 'hole' we see most of the mass of the Gaussian if it started off there, and near to the complement of $\Omega$, the 'hole', we recover almost nothing if the mass started over $\Omega$. We give some explicit examples of such initial data and $\kappa_1(t)$ and $\kappa_2(t)$ in Section \ref{apps}. The various values of $\kappa_1(t)$ and $\kappa_2(t)$ can make the behaviour of the position and the spread of the parametrix quite different even though the overall shape is still Gaussian in the spacial variable. It is curious that in the case of the $1d$ harmonic oscillator $\kappa_1=\kappa_2=1/2$ that observability on these types of domains $\omega$ is true for all $f\in L^2(\mathbb{R})$ if $T\geq \pi$ because the solutions are periodic in time, c.f, Theorem 2.2 \cite{karelhypo}. Therefore in this case our construction still has a gap to be filled because the parametrix here is only valid until $T=\frac{\pi}{2}$ for $\kappa_1=\kappa_2=1/2$. While this parametrix could be extended to $T\geq \pi$ the problem is still that the class of data which is approximately observable depends on location of its support. This gap in our proof techniques cannot easily be closed because it would involve analyzing lower bounds on $\mathrm{erfc}(z)$ for complex $z$. However there are no known results on observability when $\kappa_1(t)$ and $\kappa_2(t)$ both depend on time. 

 For the free Schr\"odinger equation and classical harmonic oscillator, the basis of Hermite functions and principles of spectral decomposition have already resulted in our Theorem \ref{main} with $\eta=0$ in \cite{miller, jkarel, wangwang, wangwang2, kp}. However one does not expect these techniques to extend themselves to parametrices or observability in the case of time dependent operators as these are purely time separable techniques. Indeed, to this end some counterexamples are shown to established principles for the time independent simple harmonic oscillator in Example \ref{main2}.  Therefore the theorems here to try new techniques which may be applicable to a more general class of operators which are not accessible with spectral theory directly. 
 
\section{Background and Main Theorems}
\subsection{Time dependent quadratic operators} We consider a class of time-dependent quadratic operators:
\begin{align*}
L=-\kappa_1(t)\Delta+\kappa_2(t)|x|^2
\end{align*}
where $0\leq \kappa_2(t)$ depends continuously on time $t$ for $0\leq t\leq T$, and for simplicity we also assume $\kappa_1(t)$ is bounded below by a positive constant and $\kappa_1\in C^1([0,T])$. 
We are interested in observability for the non-autonomous Cauchy problem
\begin{align}\label{nonauto}
&\partial_tu(t,x)+iLu(t,x)=0 \quad 0<t\leq T,\quad x\in\mathbb{R}^d\\& \nonumber
u_0=u(0,x).
\end{align} 
Because of the non $L^2(\mathbb{R}^d)$ term $|x|^2$ in the operator, we need a different definition of well-posedness. To this end we set 
\begin{align*}
B=\{u\in L^2(\mathbb{R}^d): x^{\alpha}D_x^{\beta}u\in L^2(\mathbb{R}^d),\,\, \alpha, \beta\in \mathbb{N}^d_0, \,\, |\alpha+\beta|\leq 2\}. 
\end{align*} 
The space $B$ is a Hilbert space equipped with the norm
\begin{align*}
\|u\|_{B}^2=\sum\limits_{\substack{\alpha,\beta\in \mathbb{N}_0^d\\|\alpha+\beta|=2}}\|x^{\alpha}D_x^{\beta}u\|_{L^2(\mathbb{R}^d)}^2.
\end{align*}
Because there is no easy condition that guarantees the existence of classical solutions for non-autonomous Cauchy problems, it was shown in \cite{karel}, that the equation \eqref{nonauto} has a B-valued solution and that this solution exists and is unique. We recall the definition of a B-valued solution. 
\begin{defn}[B-valued solutions \cite{karel}]
A continous function $u\in C^0([0,T],B)$ is a B-valued solution of the non-autonomous Cauchy problem \eqref{nonauto} if $u\in C^1([0,T], L^2(\mathbb{R}^d))$, then equation \eqref{nonauto} is satisfied in $L^2(\mathbb{R}^d)$. 
\end{defn}
 A portion of Theorem 1.2, by Pravda-Starov in \cite{karel} gives that for every $u_0\in B$, the non-autonomous Cauchy problem \eqref{nonauto} and its adjoint have a unique B-valued solution, and this solution is unitary for all $t\in [0,T]$. Theorem 1.2 of \cite{karel} is actually more general and includes complex-valued quadratic operators $iL$ with a non-positive real part for their Weyl symbol. For time independent quadratic Hamiltonians, the theorem is due to H\"ormander \cite{hormandermehler}. We could also analyze complex-valued quadratic operators with the same methodology presented here, but the computations for completely general operators become very difficult quickly, unless a specific example is specified.  

\subsection{Statement of the Main Theorems}
We recall that the classical Wiener's Tauberian theorem in \cite{wiener} says that the span of translates of functions are dense in $L^1(\mathbb{R})$ if they have Fourier transform which is non vanishing everywhere. This theorem implies that for $f\in L^1(\mathbb{R})$ and $\eta\in (0,1)$ that there exists a finite number $N$ and real numbers $\alpha_n$ and $\beta_n$ such that 
\begin{align}
\int\limits_{\mathbb{R}}\left|f(x)-\sum\limits_{|n|\leq N}\beta_n e^{-(x-\alpha_n)^2}\right|\,dx<\eta.
\end{align}
It does not tell what the coefficients $\beta_n$ are nor what the $\alpha_n$ might be. This statement can be generalized to higher dimensions. Furthermore since real Gaussians cannot be made into an orthonormal basis for $L^2(\mathbb{R}^d)$, there is also no least squares method to generate the $\beta_n$ and $\alpha_n$. In order to build an explicit parametrix for $L^2(\mathbb{R}^d)$ functions, we start by approximating them in a different way in Theorem \ref{approx2} below. Theorem \ref{approx2} can be thought of as more explicit than the implication of Wiener's Tauberian theorem applied to estimate $L^2(\mathbb{R}^d)\cap L^{\infty}(\mathbb{R}^d)$ functions since it specifies the coefficients and errors needed for approximation of $L^2(\mathbb{R}^d,e^{x^2})$ functions. 

Let $h_k(x)$ be the $k^{th}$ Hermite function. We prove the following Theorem expanding on the work of \cite{calc}. 
\begin{thm}\label{approx2}
Let $f$ be in $L^2(\mathbb{R}^d,e^{x^2})$, and $N$ a fixed natural number with $N>2$ and $\epsilon_0\in (0,1)$. Let $n=(n_1, . .. ,n_d)$ and $k=(k_1, . . .,k_d)$ be such that $n_i, k_i\in \{0, . . .,N\}$ for all $i$ and define
\begin{align}\label{cn}
&c_n=\\& 
\frac{1}{n_1!\sqrt{\pi}}\sum\limits_{k_1=n_1}^{N_1}\frac{(-1)^{n_1-k_1}}{(k_1-n_1)!(2\epsilon_0)^{k_1}} \,.\, .\, . \frac{1}{n_d!\sqrt{\pi}}\sum\limits_{k_d=n_d}^{N_d}\frac{(-1)^{n_d-k_d}}{(k_d-n_d)!(2\epsilon_0)^{k_d}}\int\limits_{\mathbb{R}^d}f(x)e^{x^2}\frac{\partial^{k}}{\partial x^{k}}e^{-x^2}\,dx \nonumber
\end{align}
for $N_i \in \{0, . . .,N\}$ such that $|(N_1, . . .,N_d)|=N$. We then have that 
\begin{align*}
&\left(\int\limits_{\mathbb{R}^d}\left|f(x)-\sum\limits_{|n|\leq N}c_n e^{-|x+n\epsilon_0|^2}\right|^2\,dx\right)^{\frac{1}{2}}\leq (e^NN\epsilon_0)^d\|f\|_{L^2(\mathbb{R}^d)}+E_N. \nonumber
\end{align*} 
The term $E_N$ is given by 
 \begin{align*}
E_N=\sum \limits_{|n|=N+1}^{\infty}|d_n|^2 \quad\quad d_n=\int\limits_{\mathbb{R}^d}f(x)e^{\frac{x^2}{2}}h_n(x)\,dx.
\end{align*}
\end{thm}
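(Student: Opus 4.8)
The plan is to pass through the Hermite basis, reinterpret the truncated Hermite series of $f$ as a finite-difference approximation, and then estimate the finite-difference error; I will present the one-dimensional computation, the general case being the corresponding tensor product. First I would observe that multiplication by $e^{x^2/2}$ is an isometry of $L^2(\mathbb{R}^d,e^{x^2})$ onto $L^2(\mathbb{R}^d)$, so $g:=fe^{x^2/2}\in L^2(\mathbb{R}^d)$ and, since $\{h_n\}$ is an orthonormal basis, $g=\sum_n d_n h_n$ with $d_n=\int f e^{x^2/2}h_n\,dx$ exactly as stated. Using the Rodrigues formula $\frac{d^k}{dx^k}e^{-x^2}=(-1)^kH_k(x)e^{-x^2}$ together with $h_k=(2^kk!\sqrt\pi)^{-1/2}H_k e^{-x^2/2}$, one rewrites
\[
e^{-x^2/2}h_k(x)=(2^kk!\sqrt\pi)^{-1/2}(-1)^k\tfrac{d^k}{dx^k}e^{-x^2},
\]
so that $f=\sum_k d_k(2^kk!\sqrt\pi)^{-1/2}(-1)^k\frac{d^k}{dx^k}e^{-x^2}$. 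Truncating this expansion at $|k|\le N$ leaves the Hermite tail $\sum_{|k|>N}d_k e^{-x^2/2}h_k$, whose $L^2$ norm is at most $\big(\sum_{|k|>N}|d_k|^2\big)^{1/2}$ because $e^{-x^2/2}\le 1$; this is the contribution recorded as $E_N$.

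The crux of the construction is to replace each $\frac{d^k}{dx^k}e^{-x^2}$ in the truncated sum by the $k$-th forward finite difference of step $\epsilon_0$, namely $\epsilon_0^{-k}\sum_{j=0}^k(-1)^{k-j}\binom{k}{j}e^{-(x+j\epsilon_0)^2}$. Collecting, for each shift $n$, the total coefficient of $e^{-(x+n\epsilon_0)^2}$ arising from all $k$ with $n\le k\le N$, one recovers exactly the coefficients $c_n$ of \eqref{cn}: this is a direct computation which uses $\int f e^{x^2}\frac{d^k}{dx^k}e^{-x^2}\,dx=(-1)^k\int fH_k\,dx=(-1)^k(2^kk!\sqrt\pi)^{1/2}d_k$ to identify the integral in \eqref{cn} with $d_k$, and matches the factor $\binom{k}{n}$ against $\frac{(-1)^{n-k}}{(k-n)!(2\epsilon_0)^k}$. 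In other words, $\sum_{|n|\le N}c_n e^{-|x+n\epsilon_0|^2}$ is precisely the finite-difference approximant of the truncated series for $f$, which both explains the shape of the coefficients and reduces the theorem to an error estimate.

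Third, and this is where the real work lies, I would estimate the finite-difference error $\sum_{|k|\le N}d_k(2^kk!\sqrt\pi)^{-1/2}(-1)^k\big[\tfrac{d^k}{dx^k}e^{-x^2}-\mathrm{FD}_k\big]$ in $L^2$. Expanding the scaled finite difference minus the derivative as its Taylor series in $\epsilon_0$, each term is a multiple of $\frac{d^{k+m}}{dx^{k+m}}e^{-x^2}=(-1)^{k+m}H_{k+m}e^{-x^2}$, whose $L^2$ norm grows like $(2^{k+m}(k+m)!\sqrt\pi)^{1/2}$. Summing over $m\ge 1$ and over $|k|\le N$, controlling the factorial ratios and using Cauchy--Schwarz with Bessel's inequality $\sum_k|d_k|^2\le\|fe^{x^2/2}\|_{L^2}^2$, produces the leading factor $e^N N\epsilon_0$ in one dimension. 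The main obstacle is precisely this bookkeeping: the normalizations $2^kk!$ and the higher-derivative norms grow factorially, and one must show that against the $\epsilon_0^m/m!$ decay of the finite-difference remainder they sum to a clean, $f$-uniform constant of size $e^N N\epsilon_0$ rather than something blowing up uncontrollably with $N$.

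Finally, because in $d$ dimensions the Hermite functions, the Gaussians $e^{-|x+n\epsilon_0|^2}$, and the coefficients $c_n$ all factor as products over the coordinates, I would run the one-dimensional estimate coordinatewise and combine, so that the one-dimensional error factor appears to the $d$-th power. Adding the finite-difference and Hermite-tail contributions then yields the stated bound $(e^NN\epsilon_0)^d\|f\|_{L^2(\mathbb{R}^d)}+E_N$.
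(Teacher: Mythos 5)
Your setup matches the paper's proof (Proposition \ref{approx1} plus the tensor-product reduction): the isometry $f\mapsto fe^{x^2/2}$, the identification $\int fe^{x^2}\frac{d^k}{dx^k}e^{-x^2}\,dx=(-1)^k(2^kk!\sqrt{\pi})^{1/2}d_k$, the replacement of each $\frac{d^k}{dx^k}e^{-x^2}$ by its forward finite difference of step $\epsilon_0$, the resummation over shifts (Iverson's technique) that produces exactly the coefficients \eqref{cn}, and the tail estimate giving $E_N$ are all the paper's argument. The divergence is in your third step, and there the gap is genuine. The paper does \emph{not} expand the finite-difference error as an infinite Taylor series in $\epsilon_0$: it uses Taylor's theorem with the \emph{Lagrange form of the remainder}, so that the error in approximating $\frac{d^k}{dx^k}e^{-x^2}$ involves only the single derivative of order $k+1$, evaluated at intermediate points $\xi_j\in(x,x+k\epsilon_0)$, with combinatorial prefactor $\epsilon_0\sum_j\binom{k}{j}j^{k+1}/(k+1)!\lesssim \epsilon_0 e^k k$. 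The price is a supremum over the intermediate points, which the paper controls by the translation-invariant weighted bound $\int e^{x^2}\bigl|\frac{d^{k+1}}{dx^{k+1}}e^{-x^2}\bigr|^2dx=2^{k+1}(k+1)!\sqrt{\pi}$ (giving $2(k+1)$ after normalization). Crucially, no derivative beyond order $k+1$ ever appears, so there is no factorial blow-up, and the bound is uniform in $\epsilon_0\in(0,1)$.

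Your route, by contrast, cannot deliver the stated constant for the full stated range of $\epsilon_0$. Writing $\epsilon_0^{-k}\Delta_{\epsilon_0}^k=\sum_{m\ge0}a_{k,m}\epsilon_0^m D^{k+m}$ with $a_{k,m}$ the $z^m$-coefficient of $\bigl((e^z-1)/z\bigr)^k$, the $m$-th term of your error, after dividing by the normalization $(2^kk!\sqrt{\pi})^{1/2}$, has size $a_{k,m}\epsilon_0^m2^{m/2}\sqrt{(k+m)!/k!}$. Since $a_{k,m}\ge\binom{k}{m}2^{-m}$ (take $m$ of the $k$ factors to contribute their linear term) and $(k+m)!/k!\ge k^m$, summing absolute values over $1\le m\le k$ gives at least $\bigl(1+\epsilon_0\sqrt{k/2}\bigr)^k-1$. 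For $k=N$ and $\epsilon_0\gtrsim N^{-1/2}$ this exceeds $e^NN\epsilon_0$ by a superexponential margin: the factorial growth of the Hermite norms genuinely beats the $\epsilon_0^m/m!$ decay. This is exactly the obstacle you flagged as ``the real work,'' but it is not a matter of careful bookkeeping; with term-by-term estimates the series bound is provably too large, so the method as described fails to prove the theorem for arbitrary $\epsilon_0\in(0,1)$ (it does close, with an even better, polynomial-in-$N$ constant, in the regime $\epsilon_0\lesssim N^{-3/2}$, which happens to be the regime the paper later uses). The fix is the paper's: truncate the Taylor expansion at first order with the Lagrange remainder so that only one extra derivative, and one power of $\epsilon_0$, ever enters.
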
 
In some cases $E_N=0$ such as $f=\sum\limits_{|k|\leq N}\alpha_k h_k e^{-\frac{x^2}{2}}$ with $\alpha_k\in \mathbb{R}$. Bounds on $E_N$ for $H^3([-M,M]^d)$ functions can be found in Proposition \ref{compact1}. 

We use this wavepacket approximation to generate accurate solutions to the Schr\"odinger equation. We must make some assumptions on the time span of the solutions in order for the solution not to have any singularities. As such we make the following definition.
\begin{defn}\label{flow}
 Let $H=\kappa_1(t)|p|^2+\kappa_2(t)|x|^2$ be the Hamiltonian associated to \eqref{nonauto} where $\kappa_1(t), \kappa_2(t)$ are continuous functions. The solution $(x(t),p(t))$  to  
\begin{align*}
\frac{dx_i(t)}{dt}=2\kappa_1(t)p_i(t) \quad \frac{dp_i(t)}{dt}=-2\kappa_2(t)x_i(t) \quad i=1, . . .,d\quad  0\leq t\leq T
\end{align*} 
with $(x(0),p(0))$ such that $x(0)=(1,. . . ,1)$, $p(0)=(0, . . .,0)$ are the Hamiltonian trajectories. We say it is non-zero if for all times $t$, $0\leq t\leq T$, if $x_i(t)\neq 0$ for all $i=1, . . .,d$ and the quantity
\begin{align*}
a(t)=e^{-\int\limits_0^t\frac{p_i(s)\kappa_1(s)}{x_i(s)}\,ds }
\end{align*}
is bounded. This quantity is the amplitude of a one dimensional wavepacket. 
\end{defn}
\begin{prop}\label{maincorollary}
The potential pairs $\kappa_1(t)=\kappa_2(t)=1/2$ (harmonic oscillator) and $\kappa_1(t)=\frac{e^{2at}}{2}, \kappa_2(t)=\frac{e^{-2at}}{2}$, $a>1/2$ (Cadirola-Kanai oscillator) 
have non-zero Hamiltonian flows for $T=\frac{\pi}{2}, \infty$, respectively. Thus they satisfy the conditions of Theorem \ref{main} with these particular $T_D$. For the Hamiltonian with $\sigma$ a constant \begin{align*}
H(t)=\frac{|p|^2}{2(t+d)^a}+\frac{\sigma^2(t+d)^b|x|^2}{2}, \quad a,b,d>0
\end{align*}
it is possible to find a nonzero $T_D$ depending on d,b,a since the solutions to the Hamiltonian ODEs are given in terms of Bessel functions. 
\end{prop}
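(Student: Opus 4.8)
The plan is to treat the three families separately, but in each case to reduce the first-order Hamiltonian system of Definition \ref{flow} to a single second-order linear ODE for the position component $x_i(t)$, solve it explicitly, and then read off both the first zero of $x_i$ and the amplitude. The decisive simplification is that the amplitude in Definition \ref{flow} can be expressed through $x_i$ alone. Since $\dot x_i = 2\kappa_1(t)p_i$, we have $\kappa_1(t)p_i=\tfrac12\dot x_i$, so
\[
\frac{p_i(s)\kappa_1(s)}{x_i(s)}=\frac{\dot x_i(s)}{2x_i(s)}=\frac12\frac{d}{ds}\ln|x_i(s)|,
\]
and therefore, using $x_i(0)=1$,
\[
a(t)=\exp\Big(-\tfrac12\big(\ln|x_i(t)|-\ln|x_i(0)|\big)\Big)=|x_i(t)|^{-1/2}.
\]
Thus the flow is non-zero on an interval exactly when $x_i$ has no zero there, the amplitude being automatically finite away from zeros of $x_i$. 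The data $x_i(0)=1$, $p_i(0)=0$ become $x_i(0)=1$, $\dot x_i(0)=0$, and since every coordinate satisfies the same scalar equation it suffices to work with $d=1$.

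For the harmonic oscillator $\kappa_1=\kappa_2=\tfrac12$ the system is $\dot x=p$, $\dot p=-x$, whose solution with the rest data is $x(t)=\cos t$; this is non-vanishing on $[0,\tfrac{\pi}{2})$ with first zero at $\tfrac{\pi}{2}$, and $a(t)=(\cos t)^{-1/2}$ is finite there, giving a non-zero flow up to $T=\tfrac{\pi}{2}$. For the Caldirola--Kanai pair, eliminating $p$ (via $p=e^{-2at}\dot x$) produces a constant-coefficient equation $\ddot x\mp 2a\dot x+x=0$, where the sign of the first-order term is fixed by the placement of the exponentials, with characteristic roots $r_\pm=\mp a\pm\sqrt{a^2-1}$. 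I would insert the rest data to fix the two constants and then verify that, in the admissible non-oscillatory regime, the resulting combination of exponentials keeps a single sign for all $t\ge 0$; since $x(t)$ then never returns to zero, $a(t)=|x(t)|^{-1/2}$ is finite at every finite time and the flow is non-zero for $T=\infty$.

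For the power-law Hamiltonian, with $\kappa_1=\tfrac12(t+d)^{-a}$ and $\kappa_2=\tfrac12\sigma^2(t+d)^b$, the same elimination gives, after the substitution $\tau=t+d$,
\[
x''(\tau)+\frac{a}{\tau}x'(\tau)+\sigma^2\tau^{\,b-a}x(\tau)=0.
\]
This is a Bessel equation in disguise: setting $x(\tau)=\tau^{(1-a)/2}Z_\nu(\beta\tau^\gamma)$ with
\[
\gamma=\frac{b-a+2}{2},\qquad \beta=\frac{\sigma}{\gamma},\qquad \nu=\frac{1-a}{b-a+2},
\]
reduces it to Bessel's equation of order $\nu$ for $Z_\nu$, so the general solution is $x(\tau)=\tau^{(1-a)/2}\big(C_1J_\nu(\beta\tau^\gamma)+C_2Y_\nu(\beta\tau^\gamma)\big)$. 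I would fix $C_1,C_2$ from $x(d)=1$, $x'(d)=0$ using the standard derivative and Wronskian identities for $J_\nu,Y_\nu$, and then take $T_D$ to be any value strictly below $\tau^\ast-d$, where $\tau^\ast>d$ is the first zero of this combination. Such a zero is positive and isolated because $x(d)=1\neq 0$ and $x$ is continuous, and its location is pinned down by the known asymptotics and tabulated zeros of Bessel functions; this produces a genuinely nonzero $T_D=T_D(a,b,d,\sigma)$ on which $a(t)=|x(t)|^{-1/2}$ stays finite. In every case the non-vanishing of $x_i$ together with finiteness of the amplitude is precisely the hypothesis required to apply Theorem \ref{main} with the stated $T_D$.

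I expect the Bessel case to be the main obstacle: converting the soft existence of a first zero into an explicit parameter-dependent lower bound for $T_D$ requires controlling the combination $C_1J_\nu+C_2Y_\nu$ near $\tau=d$, since $C_1,C_2$ — and hence $\tau^\ast$ — depend sensitively on $a,b,d,\sigma$ through the argument $\beta d^\gamma$ and the order $\nu$. One must also confirm $\gamma>0$ (so that $\tau\mapsto\tau^\gamma$ is monotone and the zeros of $x$ are in bijection with the Bessel zeros) and treat the degenerate values $b=a-2$ and half-integer $\nu$ separately. A secondary but real subtlety is the Caldirola--Kanai sign analysis, namely pinning down the exact range of $a$ for which the exponential combination fixed by the rest data is of one sign for all $t\ge 0$ rather than crossing zero at a finite caustic time.
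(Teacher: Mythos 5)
Your overall strategy is the same as the paper's: Proposition \ref{maincorollary} is proved in Section \ref{apps} precisely by reducing the Hamiltonian system to the scalar second-order equation $\ddot x-\tfrac{\dot\kappa_1}{\kappa_1}\dot x+4\kappa_1\kappa_2\,x=0$ with rest data, solving it in closed form, and letting $T_D$ be (below) the first zero of $x$. Your identity $a(t)=|x(t)|^{-1/2}$ is a clean way to see that boundedness of the amplitude is automatic away from zeros of $x$; the paper contains the same fact implicitly (Lemma \ref{y1} gives $y_1=\tfrac{v'}{4\kappa_1 v}$ with $v=x$, hence $a=|v|^{-1/2}$, and Example \ref{CK} notes $a(t)=\sqrt{y_2(t)}$ with $y_2=1/x$). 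The harmonic-oscillator and Bessel cases are handled at the same level of rigor as the paper: in the Bessel case the paper likewise only invokes the closed-form solution, continuity and $x(0)=1$ to conclude a nonzero $T_D$ exists, so your worry about explicit parameter dependence of $T_D$ is not an obstacle to matching the stated claim.

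The genuine gap is the Caldirola--Kanai case, which you leave as a deferred verification, and the deferral sits exactly where the statement's correctness is decided. Carry out your own plan for the sign that matches the proposition as literally written, $\kappa_1=\tfrac{e^{2at}}{2}$, $\kappa_2=\tfrac{e^{-2at}}{2}$: elimination gives the \emph{anti-damped} equation $\ddot x-2a\dot x+x=0$. For $a\ge 1$ the roots $r_\pm=a\pm\sqrt{a^2-1}$ are both positive, and the rest-data solution $x(t)=\tfrac{r_+e^{r_-t}-r_-e^{r_+t}}{r_+-r_-}$ vanishes at the finite time $t^*=\tfrac{\ln(r_+/r_-)}{r_+-r_-}$; for $\tfrac12<a<1$ the solution $e^{at}\bigl(\cos\omega t-\tfrac{a}{\omega}\sin\omega t\bigr)$, $\omega=\sqrt{1-a^2}$, also has a finite zero. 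So your step ``non-oscillatory $\Rightarrow$ single sign'' is false for that sign, and $T_D=\infty$ fails. The claim only holds for the \emph{damped} placement, $\ddot x+2a\dot x+\sigma^2x=0$ in the overdamped regime $a>\sigma$, where $x(t)=e^{-at}\bigl(\cosh\lambda t+\tfrac{a}{\lambda}\sinh\lambda t\bigr)>0$ for all $t\ge0$ with $\lambda=\sqrt{a^2-\sigma^2}$; this is the convention the paper's own Example \ref{CK} actually uses (the proposition's displayed $\kappa_1,\kappa_2$ and threshold $a>1/2$ are inconsistent with that example). A complete proof must commit to the damped convention and give this one-line positivity argument; as written, your proposal stops short of it, and for the other admissible reading of the statement the verification you postpone cannot succeed.
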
 
Proposition \ref{maincorollary} is shown in Section \ref{apps}. 
We build an explicit parametrix for $0\leq t\leq T$ with $T$ in the definition above for generic time dependent Schr\"odinger equations for initial data $u_0=f(x)e^{-\frac{|x|^2}{2}}$ with $f\in L^2(\mathbb{R}^d)$ in Theorem \ref{parametrix} stated below whose proof is given in Section \ref{wavepacketconstruction}. 
\begin{thm}\label{parametrix}
Let $u_0=f(x)e^{-|x|^2/2}$, with $f$ in $L^2(\mathbb{R}^d)$. Then for some $\eta\in (0,1)$ there is an $N$ sufficiently large such that
\begin{align}\label{approxH}
||u_0-\sum\limits_{|n|\leq N}d_nh_n(x)e^{\frac{-|x|^2}{2}}||_{L^2(\mathbb{R}^d)}\leq \eta
\end{align}
where $d_n=\langle f,h_n\rangle_{L^2(\mathbb{R}^d)}$. 
We let $\epsilon_0\in (0,1)$ be small and $a_n=n\epsilon_0$ with $n=(n_1, . . .,n_d)$, $k=(k_1, . . .,k_d)$ such that $n_i, k_i\in \{0, . . .,N\}$ for all $i$ and define
\begin{align}\label{cdn}
&c_n=\\& 
\frac{1}{n_1!\sqrt{\pi}}\sum\limits_{k_1=n_1}^{N_1}\frac{(-1)^{n_1}}{(k_1-n_1)!(2\epsilon_0)^{k_1}}\, .\, .\, . \frac{1}{n_d!\sqrt{\pi}}\sum\limits_{k_d=n_d}^{N_d}\frac{(-1)^{n_d}}{(k_d-n_d)!(2\epsilon_0)^{k_d}}d_k \nonumber
\end{align}
for $N_i \in \{0, . . .,N\}$ such that $|(N_1, . . .,N_d)|=N$.
Let $u$ be the solution to \eqref{nonauto} for all times $t$ with $0\leq t\leq T$ and $T$ in Definition \ref{flow} with this type of initial data, $u_0$, then
\begin{align}\label{twoterm}
\left(\int\limits_0^T\int\limits_{\mathbb{R}^d}\left|u(t,x)-\sum\limits_{|n|\leq N}c_n\phi_n(t,x)\right|^2\,dx\,dt\right)^{\frac{1}{2}}\leq \left(\eta+(e^{N}N\epsilon_0)^d)\right)T
\end{align}
where
\begin{align*}
\phi_n(t,x)=\left(\frac{a^2(t)}{(1-4iy_3(t))}\right)^{\frac{d}{2}}e^{iy_1(t)|x|^2-\frac{|y_2(t)x+a_n|^2}{(1-4iy_3(t))}}.
\end{align*}
The functions $y_1(t),y_2(t),y_3(t)$ are solutions to the Riccati equations (recalled in \eqref{Riccati} Section \ref{wavepacketconstruction}) associated with the Hamiltonian in Definition \ref{flow}. 
\end{thm}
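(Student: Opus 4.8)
The plan is to combine the Hermite approximation of Theorem \ref{approx2} with the exact action of the propagator on Gaussian wavepackets, and then to control the error in the space--time $L^2$ norm by the triangle inequality.

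First I would fix the decomposition. By \eqref{approxH} the initial datum $u_0$ is within $\eta$ in $L^2(\mathbb{R}^d)$ of the truncated Hermite expansion $\sum_{|n|\le N}d_nh_n(x)e^{-|x|^2/2}$. The point of Theorem \ref{approx2}, specialized to $f$ with the weight absorbed, is that this same finite Hermite sum can be rewritten as a finite linear combination $\sum_{|n|\le N}c_n e^{-|x+n\epsilon_0|^2}$ of \emph{real translated Gaussians}, with the $c_n$ given explicitly by \eqref{cdn}, up to an $L^2$ error of size $(e^NN\epsilon_0)^d\|f\|_{L^2}$ (the $E_N$ term vanishes here because we have truncated at level $N$, so only the first error term survives). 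Thus at $t=0$ we have
\begin{align*}
\Bigl\|u_0-\sum_{|n|\le N}c_n e^{-|x+n\epsilon_0|^2}\Bigr\|_{L^2(\mathbb{R}^d)}\le \eta+(e^NN\epsilon_0)^d.
\end{align*}

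Next I would apply the solution operator. Since \eqref{nonauto} has a unique unitary $B$-valued solution (Pravda-Starov's theorem quoted from \cite{karel}), the propagator $U(t)$ is an $L^2$ isometry for each $t\in[0,T]$. Writing $u(t,x)=U(t)u_0$, isometry gives, for every fixed $t$,
\begin{align*}
\Bigl\|u(t,\cdot)-\sum_{|n|\le N}c_n\,U(t)\,e^{-|x+n\epsilon_0|^2}\Bigr\|_{L^2(\mathbb{R}^d)}=\Bigl\|u_0-\sum_{|n|\le N}c_n e^{-|x+n\epsilon_0|^2}\Bigr\|_{L^2(\mathbb{R}^d)}\le \eta+(e^NN\epsilon_0)^d.
\end{align*}
The crucial identification is that $U(t)$ applied to the shifted Gaussian $e^{-|x+a_n|^2}$ is exactly $\phi_n(t,x)$. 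Here I would invoke the FIO/Gaussian-propagation structure: because the Hamiltonian $H=\kappa_1(t)|p|^2+\kappa_2(t)|x|^2$ is quadratic, a Gaussian evolves into a Gaussian, and plugging the complex-Gaussian ansatz into \eqref{nonauto} reduces the PDE to the scalar Riccati system for $y_1,y_2,y_3$ together with the amplitude ODE defining $a(t)$ in Definition \ref{flow}. The non-zero-flow hypothesis guarantees $x_i(t)\neq0$ and $a(t)$ bounded on $[0,T]$, so the $\phi_n$ are genuine (non-singular) wavepackets and the factor $(1-4iy_3(t))^{-d/2}$ stays finite. This is where I would be most careful, matching the variable-separated $d$-dimensional product against the one-dimensional Riccati solutions, and checking the amplitude normalization so that $\phi_n(0,x)=e^{-|x+a_n|^2}$.

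Finally I would integrate in time. The per-time bound above is uniform in $t\in[0,T]$, so squaring and integrating over $(0,T)$ yields
\begin{align*}
\int_0^T\Bigl\|u(t,\cdot)-\sum_{|n|\le N}c_n\phi_n(t,\cdot)\Bigr\|^2_{L^2(\mathbb{R}^d)}\,dt\le T\bigl(\eta+(e^NN\epsilon_0)^d\bigr)^2,
\end{align*}
and taking square roots gives exactly \eqref{twoterm}. The main obstacle I anticipate is not the estimate itself, which is a clean isometry-plus-triangle-inequality argument, but the verification step in the middle: establishing rigorously that the propagated shifted Gaussian equals the closed-form $\phi_n$ with the stated Riccati/amplitude data, and that the non-zero-flow condition of Definition \ref{flow} is precisely what keeps these Gaussians from degenerating on $[0,T]$. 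Once that identification is in place, the bound follows immediately from unitarity of $U(t)$ and the approximation Theorem \ref{approx2}.
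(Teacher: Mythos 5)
Your proposal is correct and follows essentially the same route as the paper's own proof: truncate in the Hermite basis (the paper's \eqref{approxH}), convert the finite Hermite sum to translated Gaussians via Theorem \ref{approx2} with vanishing tail $E_N$ (the paper's Corollary \ref{candidate}), identify the propagated Gaussians with the closed-form $\phi_n$ (the paper's Lemma \ref{gaussianwavepackets}, built on the FIO of Lemma \ref{FIOlem}), and control the time integral by unitarity of the propagator from \cite{karel}. The only cosmetic difference is that your isometry argument actually yields the factor $T^{1/2}$ rather than $T$ on the right-hand side of \eqref{twoterm}, a harmless discrepancy that is equally present in the paper's own one-line time-integration step.
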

This parametrix is very explicit as all the functions involved in the representation have an exact form.  Indeed, the $y_1, y_2,y_3$ can be computed in terms of rational functions for the Cadirola Kanai oscillator (Example \ref{CK} in Section \ref{apps} here) and for the harmonic oscillator $$\left(-\frac{\tan(t)}{2},\frac{1}{\cos(t)},-\frac{\tan(t)}{2}\right)$$ (c.f. Exercise 11.1, p.129 \cite{grigissjostrand}) and standard Schr\"odinger equation $$(y_1(t)=0,y_2(t)=1,y_3(t)=-t).$$ 

We now make the following definition about the class of $\mathcal{A}$ which are candidates for being approximately observable. 
\begin{defn}\label{as1} We let the class of admissible functions $\mathcal{A}$ be defined as follows. Let $f\in L^2(\mathbb{R}^d)$ be such that there exists a natural number $N$ with $N>2$, $\eta\in (0,1)$ $\tilde{a}_n\in \mathbb{R}^d$, and $\tilde{c}_n\in\mathbb{R}_+$ so we can approximate $f$ as 
\begin{align}\label{etadef}
&\left(\int\limits_{\mathbb{R}^d}\left|f(x)-\sum\limits_{|n|\leq N}\tilde{c}_ne^{-|x+\tilde{a}_n|^2}\right|^2\,dx\right)^{\frac{1}{2}}\leq  \eta
\end{align} 
where $n=(n_1, . .. ,n_d)$ $n_i\in \{0, . . .,N\}$, $|n|\leq N$. We also assume that $f$ has bounded $B$ norm. 
\end{defn} 
The $\tilde{a}_n$ and $\tilde{c}_n$ in the above definition may or may not coincide with the $a_n$ and $c_n$ given in Theorem \ref{approx2} if we further assume $f\in L^2(\mathbb{R}^d,e^{x^2}\,dx)$. 
We give an elementary example (Lemma \ref{stepg}) in Section \ref{approxg} that shows this class $\mathcal{A}$ includes some compactly supported piecewise $C^1$ functions which have been extended from characteristic functions. We prove the following two approximate observability theorems with $u_0\in\mathcal{A}$. We can think of each theorem having conditions on the spread of the initial data, $$\max\limits_{\substack{n,m\\|m|,|n|\leq N}} |\tilde{a}_n-\tilde{a}_m|,$$ and placement of the support, but not on the coefficients $\tilde{c}_n$ in the definition of $\mathcal{A}$. We say a bounded domain $\Omega\subset \mathbb{R}^d$ is centered at the origin, if the ball containing $\Omega$ has center at the origin. 
\begin{thm}\label{main}
Let $\Omega$ be a bounded domain centered at the origin and assume $\omega=\mathbb{R}^d\setminus\overline{\Omega}$. Assume the Hamiltonian associated to \eqref{nonauto} is non-zero on the interval $[0,T_D]$. Then, there exists a corresponding nonzero constant $C_{T}$ for all $T\in (0,T_D)$ with $T<\infty$ depending on $\Omega$, the $L^{\infty}([0,T])$ norm of $\kappa_2(t)$, and the $C^1([0,T])$ norm of $\kappa_1(t)$ such that solutions to the non-autonomous evolution equation \eqref{nonauto} with initial data $u_0\in \mathcal{A}$ satisfy the following inequality 
\begin{align}\label{obs}
\|u_0\|_{L^2(\mathbb{R}^d)}-\eta\leq C_{T}\left(\|u\|_{L^2((0,T)\times\omega)}+T\eta\right)
\end{align} 
provided $\tilde{a}_n=\varepsilon n$ with $\varepsilon$ sufficiently small. In particular, $\varepsilon$, for fixed $N$ satisfies inequality \eqref{req} essentially giving $\varepsilon\leq \frac{e^{-C_1N}}{C_2N}$ for $C_1$, $C_2$ constants depending on time and $\Omega$ when $2N<\mathrm{diam}(\Omega)$.
\end{thm}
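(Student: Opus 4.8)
The plan is to combine the exactness of Gaussian propagation under a quadratic Hamiltonian with the unitarity of the flow, thereby reducing the observability inequality to a single lower bound on the $L^2((0,T)\times\omega)$ mass of a finite sum of propagated wavepackets, and then to extract that lower bound from near-orthogonality estimates for the $\phi_n$ on $\omega$.

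First I would fix $u_0\in\mathcal A$ and set $g=\sum_{|n|\le N}\tilde c_n e^{-|x+\tilde a_n|^2}$, so that $\|u_0-g\|_{L^2(\mathbb R^d)}\le\eta$ by \eqref{etadef}. Because the Hamiltonian is quadratic, each real Gaussian $e^{-|x+\tilde a_n|^2}$ propagates \emph{exactly} to the wavepacket $\phi_n(t,x)$ of Theorem \ref{parametrix} (with $a_n$ replaced by $\tilde a_n$): no parametrix error is incurred for genuinely Gaussian data, since a Gaussian FIO applied to a Gaussian is again a Gaussian. Writing $v(t,x)=\sum_{|n|\le N}\tilde c_n\phi_n(t,x)$ for the exact solution with data $g$, the unitarity of the $B$-valued flow (Theorem 1.2 of \cite{karel}) gives $\|u(t,\cdot)-v(t,\cdot)\|_{L^2(\mathbb R^d)}=\|u_0-g\|_{L^2}\le\eta$ and $\|v(t,\cdot)\|_{L^2(\mathbb R^d)}=\|g\|_{L^2}$ for every $t\in[0,T]$. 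Restricting to $\omega$, the triangle inequality gives $\|u-v\|_{L^2((0,T)\times\omega)}\le\eta\sqrt T$, hence $\|u\|_{L^2((0,T)\times\omega)}\ge\|v\|_{L^2((0,T)\times\omega)}-\eta\sqrt T$. Since $\|g\|_{L^2}\ge\|u_0\|_{L^2}-\eta$, the theorem reduces to a bound of the form $\|v\|_{L^2((0,T)\times\omega)}\ge c\sqrt T\,\|g\|_{L^2}$ for a constant $c=c(\Omega,\kappa_1,\kappa_2,T)>0$, after which $C_T=1/c$ and the additive $\eta$, $T\eta$ errors are carried through to yield \eqref{obs}.

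The core step is this lower bound, the dynamical analogue of the ``diagonal dominates'' principle, which I would isolate as Lemma \ref{lowerinner}. Expanding,
\begin{align*}
\int_0^T\|v(t)\|^2_{L^2(\omega)}\,dt=\sum_{|n|,|m|\le N}\tilde c_n\tilde c_m\int_0^T\langle\phi_n(t),\phi_m(t)\rangle_{L^2(\omega)}\,dt,
\end{align*}
so I would first compute the diagonal terms $\int_0^T\|\phi_n(t)\|^2_{L^2(\omega)}\,dt$ directly from the explicit Gaussian form of $\phi_n$: each is a strictly positive Gaussian-tail integral over the exterior of $\Omega$, bounded below by a quantity controlled by the non-zero flow condition of Definition \ref{flow} (so that the amplitude $a(t)$ and the envelope widths stay bounded and the packet is never entirely swallowed by $\Omega$ on a positive-measure set of times). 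Crucially, the coefficients $\tilde c_n$ are positive by the definition of $\mathcal A$, so there is no cancellation and the diagonal contribution is bounded below by $(\sum_n\tilde c_n)^2$ times the minimal diagonal mass; the same positivity controls $\|g\|_{L^2}$ from above by this sum, so the two sides match.

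The main obstacle is controlling the off-diagonal integrals $\int_0^T\langle\phi_n,\phi_m\rangle_{L^2(\omega)}\,dt$ so that they cannot overwhelm the (possibly exponentially small) diagonal terms. Here I would use the hypothesis $\tilde a_n=\varepsilon n$ with $\varepsilon$ small: since all centers lie within $O(\varepsilon N)$ of one another, each $\phi_n(t)$ differs from $\phi_0(t)$ by a translation of size $O(\varepsilon N)$ in the propagated frame, so $\langle\phi_n,\phi_m\rangle_{L^2(\omega)}$ agrees with $\|\phi_0\|^2_{L^2(\omega)}$ up to a relative error of size $O(\varepsilon N\,e^{CN})$, the factor $e^{CN}$ accounting for the growth of the wavepacket normalizations and of the coefficients entering through the alternating sums in \eqref{cn}. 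Demanding that this relative error be smaller than the diagonal mass forces $\varepsilon\le e^{-C_1 N}/(C_2 N)$ as in \eqref{req}, while $2N<\mathrm{diam}(\Omega)$ guarantees that the cluster of centers genuinely sits inside $\Omega$, so that the tail/escape geometry used for the diagonal lower bound is the correct one. Once the off-diagonal terms are absorbed the sum is diagonal-dominated, the bound $\|v\|_{L^2((0,T)\times\omega)}\ge c\sqrt T\,\|g\|_{L^2}$ follows, and the chain of inequalities above yields the theorem. I expect the delicate point throughout to be making $c$ \emph{uniform} over all admissible coefficient vectors $(\tilde c_n)$ while tracking its dependence on $\Omega$ — precisely where the exponential smallness of $\varepsilon$ must be spent.
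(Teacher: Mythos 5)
Your proposal shares the paper's skeleton: approximate $u_0$ by a positively-weighted sum of Gaussians up to the $\eta$ error, propagate each Gaussian exactly via Lemma \ref{gaussianwavepackets}, use unitarity of the flow to carry the error in time, and convert $\sum_n\tilde c_n$ back into $\|u_0\|_{L^2}$ by Minkowski (the paper's Lemma \ref{chainlem2}). But your key lower bound on $\|\sum_n\tilde c_n\phi_n\|_{L^2((0,T)\times\omega)}$ is obtained by a genuinely different mechanism. The paper deliberately refuses to expand the square: it invokes the Diaz--Metcalf complementary triangle inequality (Lemma \ref{reversecomplex}) with a single reference function $\psi_1$ whose phase is matched to the propagated packets; Lemma \ref{ass1} shows that for $\tilde a_n=\varepsilon n$ with $\varepsilon$ satisfying \eqref{req} every $\phi_n(t,\cdot)$ has real correlation with $\psi_1$ bounded below, whence $\|\sum_n c_n\phi_n(t,\cdot)\|_{L^2(\omega)}\geq \delta(t,R_0)\sum_n\|c_n\phi_n(t,\cdot)\|_{L^2(B_{R_0}^c)}$ (Lemma \ref{chainlemma}); the individual tail bounds of Lemma \ref{lowerinner} then finish the chain and give the explicit constant \eqref{CT}. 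You instead expand $\|v(t)\|^2_{L^2(\omega)}$ into the full quadratic form and argue perturbatively that \emph{all} inner products $\langle\phi_n,\phi_m\rangle_{L^2(\omega)}$, diagonal and off-diagonal, lie within a small relative error of the common positive value $\|\phi_0(t)\|^2_{L^2(\omega)}$, because all centers are within $O(\varepsilon N)$ of each other in the propagated frame. This is exactly the Fresnel-type cross-term computation the paper says it is avoiding, but your version is viable precisely because you never need the exact values: the phase of $\phi_n\overline{\phi_m}$ is linear in $x$ with slope $O(\varepsilon N)$, the Gaussian envelope absorbs the linear growth, and the resulting absolute error, of size $O(\varepsilon N)$ times the \emph{full} mass $\|\phi_0\|^2_{L^2(\mathbb{R}^d)}$, must be beaten down below the exponentially small tail mass $\|\phi_0\|^2_{L^2(\omega)}$; that ratio (not the growth of normalizations, as you suggest) is what forces $\varepsilon\lesssim e^{-C_1N}/(C_2N)$, the same exponential smallness as \eqref{req}. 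Both routes give a constant uniform over the coefficient vector because all contributions are (approximately) positive.

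Two points need tightening. First, your claim that ``the diagonal contribution is bounded below by $(\sum_n\tilde c_n)^2$ times the minimal diagonal mass'' is false as stated: the diagonal alone only yields $\sum_n\tilde c_n^2$, which can be smaller than $(\sum_n\tilde c_n)^2$ by a factor of the number of packets. What rescues your argument is that the off-diagonal terms are \emph{also} approximately equal to the same positive quantity, so the full quadratic form is $\approx\bigl(\sum_n\tilde c_n\bigr)^2\|\phi_0(t)\|^2_{L^2(\omega)}$; the conclusion is right, but it is the common-value structure, not diagonal dominance, that does the work, and your write-up should say so. Second, the perturbation estimate must be uniform in $t\in[0,T]$: the effective centers are $\tilde a_n/y_2(t)$ and the widths are governed by $A(t)$ in \eqref{adef}, so you need the bounds of Lemma \ref{Glemma} (available under the non-zero flow hypothesis, with constants depending on the stated norms of $\kappa_1,\kappa_2$) to keep the error constants, and hence the admissible range of $\varepsilon$, independent of $t$; this is where the theorem's dependence of $C_T$ on $\Omega$, $\kappa_1$, $\kappa_2$ enters your version of the argument.
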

Note that we explicitly describe $C_T$ in the text for all values of $\mathrm{diam}(\Omega)$ and give examples which show this condition \eqref{req} gives a nonempty set of $\varepsilon$ for each $N$ and $T$ and $\Omega$. We call this an approximate observability inequality since spectral representations such in \cite{karelhypo} and \cite{karelparabolic} in the literature use exact representations of $u_0\in L^2(\mathbb{R}^d)$ to establish their observability inequalities. Here our $\eta$, even though it is small, can depend on another norm of the initial data such as the $L^{\infty}$ norm, see Lemma \ref{stepg} in Section \ref{approxg}. 
\begin{example}
Take $u_0=\sum\limits_{|k|\leq N} b_k e^{-|x+k\varepsilon|^2}$ where $b_k\geq 0$ are (any) real positive numbers. Consider $\Omega$ to be the ball of radius $R>1$. We have that $\eta=0$, and since $u_0$ is in $\mathcal{A}$
\begin{align*}
\|u_0\|_{L^2(\mathbb{R}^d)}\leq C_{T}\|u\|_{L^2((0,T)\times\omega)}
\end{align*}
holds provided \eqref{req} holds. This inequality \eqref{req} gives $\varepsilon\leq \frac{e^{-C_1NR}}{C_2N}$ where $C_1$ and $C_2$ are constants depending on time for $N<R$. These constants are computed in Section \ref{apps} for various pairs of $\kappa_1(t)$ and $\kappa_2(t)$. 
\end{example}
While it may seem that this class of functions is very small, and indeed it is, there is no known observability results for these types of oscillators in which \emph{both} $\kappa_1(t)$ and $\kappa_2(t)$ depend on time. Furthermore this says that if the bulk of the $L^2(\mathbb{R}^d)$ initial data class sits over the origin, over the center of $\Omega$-- the 'hole" then they must be very nearly perturbations of a Gaussian to be observable. Intuitively, the second theorem says that if we stay far away from $\Omega$ then the initial data does not reach the 'hole'. There is only one similar result in the literature in the $d=1$ case for $\kappa_1=\kappa_2=1/2$ which is Theorem 1.5 of \cite{gensheng}. Their theorem states that the support of solution must intersect the observation region in a way which is bounded below by a Gaussian to be $L^2(\mathbb{R}^d)$ observable. 
\begin{thm}\label{main3}
Let $\Omega$ be a bounded domain centered at the origin and assume $\omega=\mathbb{R}^d\setminus\overline{\Omega}$. Assume the Hamiltonian associated to \eqref{nonauto} is non-zero on the interval $[0,T_D]$. Then, there exists a corresponding nonzero constant $C_{T}$ for all $T\in (0,T_D)$, with $T<\infty$, depending on $\Omega$, the $L^{\infty}([0,T])$ norm of $\kappa_2(t)$, and the $C^1([0,T])$ norm of $\kappa_1(t)$ such that solutions to the non-autonomous evolution equation \eqref{nonauto} with initial data $u_0\in \mathcal{A}$ satisfy the following inequality 
\begin{align}\label{obs}
\|u_0\|_{L^2(\mathbb{R}^d)}-\eta\leq C_{T}\left(\|u\|_{L^2((0,T)\times\omega)}+T\eta\right)
\end{align} 
provided that for $|\alpha_N|=\max\limits_{\substack{n,m\\|m|,|n|\leq N}} |\tilde{a}_n-\tilde{a}_m|$ we have 
 \begin{align}\label{R1}
\max\limits_{n}|\tilde{a}_n|>\sqrt{\frac{|\alpha_N|^2+2-2\log(\sqrt{|y_2(t)|(1+16(y_3(t))^2)^{-1}})}{4(y_2(t))^2(1+16(y_3(t))^2)^{-1}}}+\frac{|\alpha_N|}{y_2(t)}+\frac{\mathrm{diam}(\Omega)}{2}
 \end{align}
 The functions $y_2(t),y_3(t)$ are solutions to the Riccati equations (recalled in \eqref{Riccati} Section \ref{wavepacketconstruction}) associated with the Hamiltonian in Definition \ref{flow}
\end{thm}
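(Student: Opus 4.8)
The plan is to exploit unitarity of the propagator together with the explicit Gaussian parametrix to show that data whose bulk is placed far from the hole $\Omega$ deposits only an exponentially small fraction of its $L^2$-mass over $\Omega$, so that almost all of the (conserved) mass is captured on $\omega$. Since $u_0\in\mathcal{A}$, fix the approximating data $v_0=\sum_{|n|\le N}\tilde c_n e^{-|x+\tilde a_n|^2}$ with $\|u_0-v_0\|_{L^2(\mathbb{R}^d)}\le\eta$ from \eqref{etadef}. Because the Hamiltonian is quadratic, each translated Gaussian propagates \emph{exactly} into the wavepacket $\phi_n(t,x)$ of Theorem \ref{parametrix} with $a_n$ replaced by $\tilde a_n$, so the $B$-valued solution with data $v_0$ is $v(t,\cdot)=\sum_{|n|\le N}\tilde c_n\phi_n(t,\cdot)$. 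I would first transfer the approximation error: since the flow is unitary on $B$ (Theorem 1.2 of \cite{karel}), $\|u(t,\cdot)-v(t,\cdot)\|_{L^2(\mathbb{R}^d)}=\|u_0-v_0\|_{L^2(\mathbb{R}^d)}\le\eta$ for every $t$, whence $\|u-v\|_{L^2((0,T)\times\mathbb{R}^d)}\le T^{1/2}\eta$. Unitarity also gives $\|v(t,\cdot)\|_{L^2(\mathbb{R}^d)}=\|v_0\|_{L^2(\mathbb{R}^d)}$, so that
\[\|v\|_{L^2((0,T)\times\omega)}^2=T\|v_0\|_{L^2(\mathbb{R}^d)}^2-\int_0^T\|v(t,\cdot)\|_{L^2(\Omega)}^2\,dt,\]
and the whole problem reduces to bounding the mass of $v$ over the hole $\Omega$.

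Next I would estimate $\|v(t,\cdot)\|_{L^2(\Omega)}$. From the form of $\phi_n$, its modulus is the normalized Gaussian
\[|\phi_n(t,x)|^2=\frac{a(t)^{2d}}{(1+16y_3(t)^2)^{d/2}}\exp\!\Big(-\frac{2\,|y_2(t)x+\tilde a_n|^2}{1+16y_3(t)^2}\Big),\]
centred at $x=-\tilde a_n/y_2(t)$. Since $\Omega$ is contained in the ball $B(0,\mathrm{diam}(\Omega)/2)$ about the origin, for $x\in\Omega$ one has $|y_2(t)x+\tilde a_n|\ge|\tilde a_n|-|y_2(t)|\,\mathrm{diam}(\Omega)/2$, and the triangle inequality for the spread gives $|\tilde a_n|\ge\max_m|\tilde a_m|-|\alpha_N|$ for every $n$. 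Feeding these into the Gaussian integral over $\Omega$ yields an exponentially small bound $\|\phi_n(t,\cdot)\|_{L^2(\Omega)}\le\gamma\,\rho(t)$, uniform in $n$, where $\gamma=\|e^{-|x|^2}\|_{L^2(\mathbb{R}^d)}$ is the (conserved) single-packet norm and $\rho(t)<1$. Condition \eqref{R1} is exactly the algebraic statement that the worst-case center is far enough from $\Omega$ — after accounting for the normalization prefactor (the $\log$ term), the spread offset $|\alpha_N|/y_2(t)$, and the radius $\mathrm{diam}(\Omega)/2$ of the hole — to force $\rho(t)$ below the threshold required in the final step.

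The remaining point is to convert the per-packet bound into a bound on the superposition, i.e. to control the non-orthogonality of the overlapping Gaussians. By the triangle inequality and Cauchy--Schwarz, $\|v(t,\cdot)\|_{L^2(\Omega)}\le\sum_n|\tilde c_n|\,\|\phi_n(t,\cdot)\|_{L^2(\Omega)}\le\gamma\rho(t)\sqrt{M}\big(\sum_n|\tilde c_n|^2\big)^{1/2}$, where $M=\#\{n:|n|\le N\}$. I would then invoke Lemma \ref{lowerinner}, whose content is that the off-diagonal inner products $\langle\phi_n,\phi_m\rangle_{L^2(\mathbb{R}^d)}$ are small enough that the Gram matrix of the packets is uniformly well-conditioned, to obtain the matching lower bound $\|v_0\|_{L^2(\mathbb{R}^d)}=\|v(t,\cdot)\|_{L^2(\mathbb{R}^d)}\ge c\,\gamma\big(\sum_n|\tilde c_n|^2\big)^{1/2}$. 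Combining the two gives $\|v(t,\cdot)\|_{L^2(\Omega)}\le\delta(t)\|v_0\|_{L^2(\mathbb{R}^d)}$ with $\delta(t)=c^{-1}\sqrt{M}\,\rho(t)$, and \eqref{R1} guarantees $\bar\delta^2:=T^{-1}\int_0^T\delta(t)^2\,dt<1$. Substituting into the mass identity from the first step yields $\|v\|_{L^2((0,T)\times\omega)}\ge T^{1/2}\sqrt{1-\bar\delta^2}\,\|v_0\|_{L^2(\mathbb{R}^d)}$, and finally transferring back through $\|u-v\|_{L^2((0,T)\times\mathbb{R}^d)}\le T^{1/2}\eta$ and $\|v_0\|_{L^2(\mathbb{R}^d)}\ge\|u_0\|_{L^2(\mathbb{R}^d)}-\eta$ produces \eqref{obs} with $C_T=(T(1-\bar\delta^2))^{-1/2}$, a constant depending on $\Omega$ and, through $y_2,y_3,a$, on $\|\kappa_2\|_{L^\infty([0,T])}$ and $\|\kappa_1\|_{C^1([0,T])}$.

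The main obstacle is the step controlling the cross terms: the approximating Gaussians are genuinely non-orthogonal and, once the spread $|\alpha_N|$ is small, nearly linearly dependent, so a naive triangle-inequality lower bound on $\|v_0\|$ degenerates; extracting the uniform lower bound $\|v_0\|\ge c\gamma(\sum|\tilde c_n|^2)^{1/2}$ with a constant $c$ independent of the coefficients $\tilde c_n$ is the crux, and is exactly where Lemma \ref{lowerinner} does the work. A secondary technical point is verifying that \eqref{R1}, which involves the time-dependent $y_2(t),y_3(t)$, forces $\delta(t)<1$ \emph{uniformly} on $[0,T]$; here the non-zero-flow hypothesis on $[0,T_D]$ with $T<T_D$ keeps $y_2(t),y_3(t),a(t)$ bounded and bounded away from degeneracy, so the supremum of $\delta(t)$ over $[0,T]$ is controlled.
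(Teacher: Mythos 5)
Your overall architecture --- approximate $u_0$ by $v_0=\sum_{|n|\le N}\tilde c_n e^{-|x+\tilde a_n|^2}$, propagate each Gaussian exactly via Lemma \ref{gaussianwavepackets}, transfer the $\eta$ error by unitarity, and then bound the mass left over the hole $\Omega$ --- is sound, and is in fact a ``complement'' version of what the paper does (the paper removes a half-space $[R_2,\infty)^d$ containing $\Omega$ rather than integrating over $\Omega$ itself). But the coefficient bookkeeping in your third step has a genuine gap. Your chain is $\|v(t,\cdot)\|_{L^2(\Omega)}\le\gamma\rho(t)\sum_n\tilde c_n\le\gamma\rho(t)\sqrt{M}\left(\sum_n\tilde c_n^2\right)^{1/2}$ with $M$ the number of packets and $\gamma=(\pi/2)^{d/4}$ the single-packet norm, followed by the lower bound $\|v_0\|\ge c\,\gamma\left(\sum_n\tilde c_n^2\right)^{1/2}$, giving $\delta(t)=c^{-1}\sqrt{M}\rho(t)$; you then assert that \eqref{R1} forces $\bar\delta<1$. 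It cannot: \eqref{R1} depends only on $|\alpha_N|$, $y_2$, $y_3$ and $\mathrm{diam}(\Omega)$, never on $M$. If all centers $\tilde a_n$ are clustered (so $|\alpha_N|$ is tiny) at the minimal distance permitted by \eqref{R1}, then $\rho(t)$ is a fixed constant of order $e^{-O(1)}$ while $M$ is arbitrarily large, so $\sqrt{M}\rho(t)>1$ and your final step collapses. The factor $\sqrt M$ is lost exactly where you pass from $\sum\tilde c_n$ to $\sqrt M(\sum\tilde c_n^2)^{1/2}$ and then bound $\|v_0\|^2$ from below only by its diagonal part $\gamma^2\sum\tilde c_n^2$: in the clustered regime $\|v_0\|^2\approx\gamma^2(\sum\tilde c_n)^2$, which exceeds the diagonal by a factor $M$, and that is precisely what you throw away. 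Separately, your appeal to Lemma \ref{lowerinner} misstates it: that lemma bounds $\sum_n\|c_n\phi_n(t,\cdot)\|_{L^2([-R,R]^d)}$ below by $\epsilon(t,R)\sum_n|c_n|$; it says nothing about conditioning of the Gram matrix of the packets (and no uniform conditioning can hold as the centers merge). The diagonal lower bound you actually invoke is true, but trivially and with $c=1$, because the $\tilde c_n$ and all Gaussian overlaps are positive.

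The repair is to never leave the $\ell^1$ scale, and this is where positivity of the $\tilde c_n$ --- which your argument barely exploits --- becomes essential. Keep $\|v(t,\cdot)\|^2_{L^2(\Omega)}\le\gamma^2\rho(t)^2\left(\sum_n\tilde c_n\right)^2$, and bound $\|v_0\|^2=\sum_{n,m}\tilde c_n\tilde c_m G_{nm}$ from below using $G_{nm}=\langle\phi_n(0,\cdot),\phi_m(0,\cdot)\rangle_{L^2(\mathbb{R}^d)}=(\pi/2)^{d/2}e^{-|\tilde a_n-\tilde a_m|^2/2}\ge\gamma^2e^{-|\alpha_N|^2/2}$, so that $\|v(t,\cdot)\|^2_{L^2(\Omega)}\le\rho(t)^2e^{|\alpha_N|^2/2}\|v_0\|^2$; the needed smallness $\rho(t)^2e^{|\alpha_N|^2/2}<1$ is exactly what the $|\alpha_N|^2$ term under the square root in \eqref{R1} is designed to deliver, uniformly on $[0,T]$. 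This is, in reorganized form, the paper's proof: there the discarded bilinear form is dominated term by term, $2e\left|\langle\tilde\phi_n(t,\cdot),\tilde\phi_m(t,\cdot)\rangle_{L^2([R_2,\infty)^d)}\right|\le\langle\phi_n(0,\cdot),\phi_m(0,\cdot)\rangle_{L^2(\mathbb{R}^d)}$, unitarity of the flow identifies $\sum_{n,m}c_nc_m\langle\tilde\phi_n(t,\cdot),\tilde\phi_m(t,\cdot)\rangle_{L^2(\mathbb{R}^d)}$ with $\|v_0\|^2$, and positivity of the coefficients and of the time-zero overlaps then yields a lower bound of the form $\left(1-\frac{1}{2e}\right)\|v_0\|^2$ for the observed mass, with no appearance of $M$ anywhere. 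With this one change your complement argument goes through and becomes essentially equivalent to the paper's.
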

Functions which are $L^{2}\textendash L^{\infty}(\mathbb{R}^d)$ observable are examined in Section \ref{conclusions} in Corollary \ref{linftyobs}. 
This theorem shows that Gaussian like data sufficiently far away from the 'hole" $\Omega$ is nearly observable. We also compute examples in Section \ref{apps} which show that the right hand side of \eqref{R1} is bounded and therefore this condition holds for a nonempty set of $u_0\in\mathcal{A}$. The idea behind the above theorem also lends itself to counterexamples when $\Omega$ is no longer bounded.
We recall the following definition which is a modification from the one in \cite{jkarel}.
\begin{defn}\label{geof} [Geometric Filling]
Let $\omega$ be a measurable subset of $\mathbb{R}^d$. We say that $\omega$ satisfies the geometric filling condition if
\begin{align*}
\liminf_{R_1\rightarrow +\infty} . . . \liminf_{R_d\rightarrow +\infty} \frac{|\omega\cap [-R_1,R_1]\times . . \times [-R_d,R_d]|}{|[-R_1,R_1]\times . . .\times [-R_d,R_d]|}>0
\end{align*}
holds. 
\end{defn} 
Given this definition, we have the following counterexample(s). 
\begin{example}\label{main2}
If the Geometric Filling condition in Definition \ref{geof} fails, then $\omega$ is not necessarily observable for all $u_0\in B$ in any dimension $d\geq 2$. However, the condition is not sufficient to guarantee observability of \eqref{nonauto} for all time dependent operators. In particular it is not enough to guarantee the observability inequality holds when the Hamiltonian flow does not change sign. 
\end{example}
The demonstration of the above statement is based on two counterexamples. We now remark that Theorem \ref{main} is present in the literature but only for the free space Schr\"odinger equation and the simple harmonic oscillator. 
\begin{rem} 
In \cite{jkarel} the authors prove exact control for the time-independent fractional Schr\"odinger equation
\begin{align*}
&(\partial_t-i(-\Delta+|x|^2)^s)u=0\\&
u(0,x)=u_0(x)
\end{align*} 
for $s\geq 1$ by giving very precise spectral estimates for $s\geq 1/2$ and $d\geq 1$, but only general principles for the time of observability. We are not able to handle the case when $s\neq 1$ here, since our methods are based on FIO solutions which do not exist for fractional harmonic oscillators, but we are able to say something about general time dependent quadratic Schr\"odinger equations. In contrast, the authors of \cite{gensheng} analyze control for 
\begin{align*}
&(\partial_t-i(-\Delta+|x|^{2m})u=0 \quad  m\in \mathbb{N} \\&
u(0,x)=u_0(x).
\end{align*}
We would expect that in the case $m>1$ FIO solutions also exist, so an extension to time dependent operators would also be possible. 
\end{rem}

Many of the lemmata and theorems presented in this text for building the solution are also applicable to the time-dependent heat equation under the Wick rotation $t\mapsto it$. Observability for the corresponding time-independent heat equation was considered using a spectral decomposition \cite{karelparabolic} and for the free space heat equation in \cite{wangwang3}. However at a certain critical points in the estimates we use the unitarity of the Schr\"odinger equation to establish observability, and it is unclear what the corresponding replacement analogues are to this property for the heat equation. In specific, we need unitarity of the Schr\"odinger equation for finding lower bounds for the parametrix solutions. However the parametrix estimates still hold under the Wick rotation.

\section{Construction of the FIO for generic $\kappa(t)$}

In this section we construct explicit examples of FIO solutions to \eqref{nonauto} which are only abstractly constructed in \cite{karel}. Our treatment is similar to \cite{grigissjostrand}, c.f. also \cite{hormander3}.  
\begin{lem}\label{FIOlem}
Assume the Hamiltonian flow (Definition \ref{flow})  is non-zero for all $0\leq t\leq T$. The associated FIO solution to \eqref{nonauto}  can be written as 
\begin{align*}
u(t,x)=\frac{1}{(2\pi)^{\frac{d}{2}}}\int\limits_{\mathbb{R}^d}e^{i\phi(t,x,\eta)}(a(t))^d\hat{u}_0(\eta)\,d\eta.
\end{align*}
The equality is understood for non-$L^1$ $u_0$ functions as in the sense of distributions as above. 
The phase is of the form 
\begin{align*}
i\phi(t,x,\eta)=iy_1(t)|x|^2+iy_2(t)x\cdot\eta+iy_3(t)|\eta|^2
\end{align*}
where $y_1(t), y_2(t), y_3(t)$ are functions of $t$ as determined by the following system of ordinary differential equations
\begin{align}\label{system}
&y'_1(t)=-4\kappa_1(t)(y_1(t))^2-\kappa_2(t) \quad y_1(0)=0\\ \nonumber
&y_2'(t)=-4\kappa_1(t)y_1(t)y_2(t)\quad y_2(0)=1\\ \nonumber
&y_3'(t)=-\kappa_1(t)y_2^2(t) \quad y_3(0)=0 \nonumber
\end{align}
and the amplitude satisfies
\begin{align*}
a(t)=e^{-2\int\limits_0^t y_1(s)\kappa_1(s)\,ds}.
\end{align*}
\end{lem}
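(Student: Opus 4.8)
The plan is to treat the stated formula as an ansatz, verify directly that it solves \eqref{nonauto} with the correct initial data, and then appeal to the uniqueness of $B$-valued solutions from \cite{karel} to conclude that it is \emph{the} solution. Since $L$ acts only in the $x$ variables and commutes with the $\eta$-integration, it suffices to check that the integrand $\psi(t,x,\eta)=e^{i\phi(t,x,\eta)}(a(t))^d$ satisfies $\partial_t\psi+iL_x\psi=0$ for each fixed $\eta$; the prefactor $(2\pi)^{-d/2}$ and the factor $\hat u_0(\eta)$ are inert under both operations.

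First I would compute the action of the operator on $\psi$. Writing $\nabla_x\phi=2y_1(t)x+y_2(t)\eta$ and $\Delta_x\phi=2d\,y_1(t)$, one obtains
\[
\Delta_x\psi=\bigl(2id\,y_1-4y_1^2|x|^2-4y_1y_2\,x\cdot\eta-y_2^2|\eta|^2\bigr)\psi,
\]
while $\partial_t\psi=\bigl(i\partial_t\phi+d\,a'/a\bigr)\psi$. Substituting these into $\partial_t\psi+iL_x\psi$ and factoring out $\psi$ leaves a polynomial in $x$ and $\eta$ whose coefficients must each vanish. Collecting the coefficient of $|x|^2$ gives $y_1'+4\kappa_1y_1^2+\kappa_2=0$, the coefficient of $x\cdot\eta$ gives $y_2'+4\kappa_1y_1y_2=0$, the coefficient of $|\eta|^2$ gives $y_3'+\kappa_1y_2^2=0$, and the $x,\eta$-independent term gives $a'/a+2\kappa_1y_1=0$. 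These are precisely the system \eqref{system} together with the amplitude equation, so the ansatz is an exact solution exactly when $y_1,y_2,y_3,a$ solve those equations. The initial conditions $y_1(0)=y_3(0)=0$, $y_2(0)=1$ force $\phi(0,x,\eta)=x\cdot\eta$ and $a(0)=1$, whence $u(0,x)=(2\pi)^{-d/2}\int e^{ix\cdot\eta}\hat u_0(\eta)\,d\eta=u_0(x)$ by Fourier inversion.

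It then remains to check that these ODEs admit finite solutions on $[0,T]$, and this is where the non-zero flow hypothesis of Definition \ref{flow} enters. The equation for $y_1$ is a Riccati equation, which in general can blow up in finite time; I would linearize it through the classical trajectory $(x(t),p(t))$ with $x(0)=1$, $p(0)=0$. A direct differentiation shows that $y_1(t)=p(t)/(2x(t))$ solves $y_1'=-4\kappa_1y_1^2-\kappa_2$ with $y_1(0)=0$, that $y_2(t)=1/x(t)$ solves the second equation, and that $y_3(t)=-\int_0^t\kappa_1(s)/x(s)^2\,ds$ solves the third; moreover $a(t)=\exp(-2\int_0^t y_1\kappa_1\,ds)$ coincides with the amplitude $\exp(-\int_0^t p\kappa_1/x\,ds)$ of Definition \ref{flow} since $2y_1=p/x$. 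Because the flow is assumed non-zero, $x(t)\neq0$ and $a(t)$ is bounded on $[0,T]$, so all four functions are well defined, real, and $C^1$ there.

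Finally I would address the interpretation for $u_0$ that are not in $L^1$. When $\hat u_0\in L^1(\mathbb{R}^d)$ the integral converges absolutely and differentiation under the integral sign is justified, yielding a classical solution. For general $u_0\in B$ (so $\hat u_0\in L^2$ with the weighted decay encoded by the $B$-norm), the phase $\phi$ is real, so the expression is read as a Fresnel-type oscillatory integral, equivalently as a tempered distribution depending continuously on $t$; it agrees with the $L^1$ case on a dense set and extends by continuity of the solution operator. Since \cite{karel} guarantees a \emph{unique} $B$-valued solution of \eqref{nonauto}, and the constructed $u$ both solves the equation and carries the correct initial data, the two must coincide, completing the proof. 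The principal technical point is the potential blow-up of the Riccati equation, which is exactly neutralized by the non-zero flow condition through the identity $y_1=p/(2x)$; by contrast the oscillatory-integral justification for non-integrable data is routine, given the well-posedness theory already in place.
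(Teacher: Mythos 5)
Your proposal is correct, and its computational core coincides with the paper's: plugging the quadratic-phase ansatz into the equation and matching the coefficients of $|x|^2$, $x\cdot\eta$, $|\eta|^2$ and the constant term is exactly the eikonal-plus-transport splitting the paper performs, and it yields the identical system \eqref{system} together with $a'=-2y_1\kappa_1 a$. The difference is in the logical framing. The paper invokes Theorem 1.3 of \cite{karel} \emph{for existence and structure} --- it is told in advance that the solution is an FIO with a quadratic phase, and then only has to determine the coefficient functions from the eikonal and transport equations; you instead treat the formula as a pure ansatz, verify it solves the equation, and invoke \cite{karel} only \emph{for uniqueness} of $B$-valued solutions to conclude the ansatz is the solution. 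Both uses of the reference are legitimate; your route is slightly more self-contained and makes the verification explicit, while the paper's route explains structurally why a quadratic phase is guaranteed rather than fortuitous. You also fold into the proof the well-posedness of the Riccati equation via the linearization $y_1=p/(2x)$ along the classical trajectory; the paper isolates this as a separate lemma (Lemma \ref{y1}, with $y_1=v'/(4\kappa_1 v)$ and $v=x(t)$, which is the same identity since $x'=2\kappa_1 p$). Your identifications $y_2=1/x(t)$, $y_3=-\int_0^t\kappa_1(s)x(s)^{-2}\,ds$ and the matching of $a(t)$ with the amplitude in Definition \ref{flow} all check out, so the non-zero flow hypothesis enters your argument in precisely the same place it enters the paper's.
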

\begin{proof} 
By Theorem 1.3 in \cite{karel}, as discussed in the previous section, the phase function $\phi(t,x,\eta)$ can be written in the form 
\begin{align*}
y_1(t)|x|^2+y_2(t)x\cdot\eta+y_3(t)|\eta|^2
\end{align*}
where $y_1(t), y_2(t),y_3(t)$ are continuous functions of time for $0\leq t\leq T$. 
The phase function will necessarily solve the eikonal equation 
\begin{align*}
\partial_t\phi+\kappa_1(t)|\nabla_x\phi|^2+\kappa_2(t)|x|^2=0.
\end{align*}  
This leads to the following system of ordinary differential equations 
\begin{align}\label{Riccati}
&-y'_1(t)=4\kappa_1(t)(y_1(t))^2+\kappa_2(t)\\ \nonumber
&-y_2'(t)=4\kappa_1(t)y_1(t)y_2(t)\\ \nonumber
&-y_3'(t)=\kappa_1(t)y_2^2(t). \nonumber
\end{align}
Using the condition $\nabla_x\phi(0,x)=\eta$ and assuming $y_1(t)$ is purely real, we obtain the initial conditions for the system. It remains to solve the the transport equation to find $a(t)$. We then have that
\begin{align*} 
a'(t)=-2y_1(t)\kappa_1(t)a(t) \quad a(0)=1
\end{align*}
and the proof is finished. 
\end{proof}
\begin{lem}\label{y1}
The solution $y_1(t)$ to the Riccati equation in \eqref{system} (first equation) is well-posed when the Hamiltonian flow is non-zero. 
\end{lem}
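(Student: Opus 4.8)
The plan is to recognize the first equation of \eqref{system} as a scalar Riccati equation and to linearize it through the Hamiltonian trajectories $(x(t),p(t))$ of Definition~\ref{flow}. First I would observe that, because every component shares the initial data $x_i(0)=1$, $p_i(0)=0$ and obeys the same pair of linear equations $x_i'=2\kappa_1(t)p_i$, $p_i'=-2\kappa_2(t)x_i$, the ratio $p_i(t)/x_i(t)$ is independent of $i$ wherever it is defined. I then claim that the candidate
$$\widetilde{y}_1(t)=\frac{p_i(t)}{2x_i(t)}$$
solves the initial value problem $y_1'=-4\kappa_1(t)y_1^2-\kappa_2(t)$, $y_1(0)=0$. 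The initial condition is immediate since $p_i(0)=0$, and a direct differentiation using the quotient rule and Hamilton's equations yields $\widetilde{y}_1'=-\kappa_2-\kappa_1(p_i/x_i)^2=-\kappa_2-4\kappa_1\widetilde{y}_1^2$, which is exactly the stated equation. This identity also explains the consistency of the two formulas for the amplitude $a(t)$ in Lemma~\ref{FIOlem} and in Definition~\ref{flow}, since $2\widetilde{y}_1=p_i/x_i$.

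Next I would settle existence and uniqueness. Since $\kappa_1\in C^1([0,T])$ and $\kappa_2\in C([0,T])$, the Hamiltonian system is linear with continuous coefficients, so $(x(t),p(t))$ exists, is unique, and is $C^1$ on all of $[0,T]$ by standard linear ODE theory. The right-hand side of the Riccati equation is a polynomial in $y_1$ with continuous time-dependent coefficients, hence locally Lipschitz in $y_1$; by Picard--Lindel\"of the solution is unique for as long as it exists, so $y_1=\widetilde{y}_1$.

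Finally, the crux is global existence on $[0,T]$, i.e.\ ruling out finite-time blow-up. A scalar Riccati equation generically does blow up (here, since $\kappa_1>0$ and $\kappa_2\geq 0$, one has $y_1'\leq 0$, so $y_1$ is nonincreasing from $y_1(0)=0$ and can only escape to $-\infty$), and the blow-up of $\widetilde{y}_1=p_i/(2x_i)$ occurs exactly when $x_i(t)$ reaches zero. The non-zero flow hypothesis of Definition~\ref{flow} asserts precisely that $x_i(t)\neq 0$ for all $t\in[0,T]$; since $x_i$ is continuous on the compact interval $[0,T]$, it is bounded away from zero there, whence $\widetilde{y}_1$ is continuous and bounded on $[0,T]$. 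Thus no blow-up occurs and $y_1$ is well-posed on the full interval. The main obstacle is conceptual rather than computational: one must identify the correct linearizing substitution $y_1=p_i/(2x_i)$ that ties the a priori delicate blow-up behaviour of the Riccati equation to the geometric non-vanishing condition on the classical flow; once this link is made, well-posedness reduces to standard ODE facts together with the compactness of $[0,T]$.
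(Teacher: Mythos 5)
Your proof is correct and takes essentially the same approach as the paper: the paper linearizes the Riccati equation via $y_1=\frac{v'}{4\kappa_1 v}$ where $v$ solves $v''-Rv'+Sv=0$ with $R=\frac{d}{dt}\ln\kappa_1$, $S=4\kappa_1\kappa_2$, and then observes $v=x(t)$, which is exactly your substitution $y_1=p_i/(2x_i)$ since $x_i'=2\kappa_1 p_i$. Your write-up is in fact more complete than the paper's terse proof, as you explicitly verify the initial condition, invoke Picard--Lindel\"of for uniqueness, and spell out why non-vanishing of $x_i$ on the compact interval rules out finite-time blow-up.
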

\begin{proof}
There are examples of $\kappa_1(t),\kappa_2(t)$ for which the Riccati equation is not well-posed (either at all, or after a certain interval) and a corresponding FIO solution cannot be constructed. While this is implied by the abstract constructions in \cite{hormandermehler}, \cite{hormanderq} and \cite{karel}, it is important to see explicitly in our section on observability estimates. Let $S(t)=4\kappa_1(t)\kappa_2(t)$, and $R(t)=\frac{d}{dt}\mathrm{ln}(\kappa_1(t))$. Then a solution $v(t)$ to 
\begin{align*}
v''-Rv'+Sv=0 \quad v(0)=1,\quad v'(0)=0
\end{align*}
is related to the Riccati solution (first equation in \eqref{system}) by $y_1=\frac{v'}{4\kappa_1 v}$. Therefore in order for the Riccati equation to be well-posed, $\kappa_1(t)$ must also be differentiable and nonzero, and $v$ must not be zero. (In addition to $\kappa_2(t)$ being continuous). Notice that in our case $x(t)=v$, which is why we assume the Hamiltonian flow is non-zero.  
\end{proof}
Theorem 1.3 in \cite{karel} then gives that the FIO is unitary when the flow is non-zero. This statement also follows by direct calculation in our case.  Notice that the second assumption in the definition is equivalent to $a(t)^2<\infty$. 

\section{Proof of Theorem \ref{approx2}, Approximation with Gaussians}\label{approxg}
We would like to use the following Hermite inspired expansion for all $f(x)$ in $L^2(\mathbb{R},e^{x^2}\,dx):$ 
\begin{align*}
f(x)=\sum\limits_{n=0}^{\infty} b_n \frac{d^n}{dx^n}e^{-x^2}
\end{align*}
where 
\begin{align*}
b_n=\frac{1}{2^nn!\sqrt{\pi}}\int\limits_{\mathbb{R}}f(x)e^{x^2}\frac{d^n}{dx^n}e^{-x^2}\,dx.
\end{align*}
 Indeed, recall that the Hermite polynomials are defined as 
\begin{align}\label{Hermitepoly}
H_n(x)=(-1)^ne^{x^2}\frac{d^n}{dx^n}e^{-x^2}.
\end{align} 
Then the set of Hermite functions 
\begin{align}\label{Hermite}
\left\{ h_n(x)=\frac{1}{\sqrt{2^n n! \sqrt{\pi}}}H_n(x)e^{-x^2/2}: n\in \mathbb{N} \right\}
\end{align}
is a well known orthonormal basis of $L^2(\mathbb{R})$. For any $g\in L^2(\mathbb{R})$ it is possible to write
\begin{align*}
g(x)=\sum\limits_{n=0}^{\infty} d_n h_n(x)
\end{align*}
where 
\begin{align*}
d_n=\int\limits_{-\infty}^{\infty}g(x)\overline{h_n(x)}\,dx.
\end{align*}
Applying the representation with $g(x)=f(x)e^{x^2/2}$ gives the desired formula for the $b_n$, and moreover we have then the following relationship: 
\begin{align}\label{relatie} 
b_n=d_n\frac{(-1)^n}{\sqrt{2^n n! \sqrt{\pi}}}.
\end{align}
Now we want to approximate the derivatives of $e^{-x^2}$ in $L^2(\mathbb{R})$. This proposition is loosely based on Proposition 2 in \cite{calc}, which in which the errors are done in a different norm only for compactly supported functions. 
\begin{prop}\label{approx1}
For all $f\in L^2(\mathbb{R}^d,e^{x^2}\,dx)$, $N$ a fixed natural number and $\epsilon_0\in (0,1)$ we have that 
\begin{align*}
&\left(\int\limits_{\mathbb{R}}\left|f(x)-\sum\limits_{n=0}^Nc_ne^{-(x+n\epsilon_0)^2}\right|^2\,dx\right)^{\frac{1}{2}}\leq Ne^{N}\epsilon_0\|f\|_{L^2(\mathbb{R})}+E_N
\end{align*} 
where 
\begin{align}\label{cn}
c_n=\frac{1}{n!\sqrt{\pi}}\sum\limits_{k=n}^N\frac{(-1)^{n-k}}{(k-n)!(2\epsilon_0)^k}\int\limits_{\mathbb{R}}f(x)e^{x^2}\frac{d^k}{dx^k}e^{-x^2}\,dx
\end{align}
and 
\begin{align*}
E_N=\left(\sum\limits_{n=N+1}^{\infty}|d_n|^2\right)^{\frac{1}{2}} \qquad d_n=(-1)^n\int\limits_{\mathbb{R}}f(x)e^{x^2}\frac{d^n}{dx^n}e^{-x^2}\,dx.
\end{align*}
\end{prop}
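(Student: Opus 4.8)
The plan is to recognize the finite Gaussian sum as a scaled finite-difference approximation of the truncated Hermite-type expansion of $f$, and then to control the two sources of error separately: the replacement of derivatives by finite differences, and the truncation of the Hermite series.

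First I would record the expansion set up above. Writing $g=fe^{x^2/2}\in L^2(\mathbb{R})$ and expanding $g=\sum_n d_n h_n$ gives $f=\sum_{k=0}^\infty b_k\frac{d^k}{dx^k}e^{-x^2}$ in $L^2(\mathbb{R})$, with $b_k=(-1)^kd_k/\sqrt{2^k k!\sqrt\pi}$ by \eqref{relatie}. Truncating at $k=N$ leaves a remainder whose $L^2$ norm is exactly $E_N=(\sum_{k>N}|d_k|^2)^{1/2}$, by Parseval and orthonormality of the $h_n$. So it remains to compare $\sum_{n=0}^N c_n e^{-(x+n\epsilon_0)^2}$ with $\sum_{k=0}^N b_k\frac{d^k}{dx^k}e^{-x^2}$. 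This is the combinatorial heart: unwinding \eqref{cn}, the substitution $\int f e^{x^2}\frac{d^k}{dx^k}e^{-x^2}\,dx=2^kk!\sqrt\pi\,b_k$ gives $c_n=\sum_{k=n}^N(-1)^{k-n}\binom{k}{n}\epsilon_0^{-k}b_k$, and interchanging the order of summation yields
\[
\sum_{n=0}^N c_n e^{-(x+n\epsilon_0)^2}=\sum_{k=0}^N\frac{b_k}{\epsilon_0^k}\sum_{n=0}^k(-1)^{k-n}\binom{k}{n}e^{-(x+n\epsilon_0)^2}=\sum_{k=0}^N b_k\,\frac{\Delta_{\epsilon_0}^k\psi}{\epsilon_0^k},
\]
where $\psi(y)=e^{-y^2}$ and $\Delta_{\epsilon_0}^k$ is the $k$-th forward difference of step $\epsilon_0$. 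Thus the Gaussian sum is precisely the truncated expansion with each derivative $\frac{d^k}{dx^k}e^{-x^2}$ replaced by its scaled forward difference $\epsilon_0^{-k}\Delta_{\epsilon_0}^k\psi$.

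Next I would estimate the finite-difference error term by term. Using the iterated-integral (Peano-kernel) representation $\epsilon_0^{-k}\Delta_{\epsilon_0}^k\psi(x)=\int_{[0,1]^k}\psi^{(k)}(x+\epsilon_0(t_1+\cdots+t_k))\,dt$, together with the $L^2$ modulus-of-continuity bound $\|\psi^{(k)}(\cdot+s)-\psi^{(k)}\|_{L^2}\le|s|\,\|\psi^{(k+1)}\|_{L^2}$ and Minkowski's integral inequality, one gets $\big\|\frac{d^k}{dx^k}e^{-x^2}-\epsilon_0^{-k}\Delta_{\epsilon_0}^k\psi\big\|_{L^2}\le\frac{k\epsilon_0}{2}\|\psi^{(k+1)}\|_{L^2}$. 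Since $\psi^{(k+1)}=(-1)^{k+1}H_{k+1}e^{-x^2}$ by \eqref{Hermitepoly} and $e^{-x^2}\le1$, the normalization \eqref{Hermite} gives $\|\psi^{(k+1)}\|_{L^2}\le\sqrt{2^{k+1}(k+1)!\sqrt\pi}$. Combining with $|b_k|=|d_k|/\sqrt{2^kk!\sqrt\pi}$ collapses the factorials, $|b_k|\,\|\psi^{(k+1)}\|_{L^2}\le\sqrt{2(k+1)}\,|d_k|$, and summing over $k\le N$ (Cauchy–Schwarz plus $\sum_k|d_k|^2\le\|g\|_{L^2}^2$) bounds the total finite-difference error by a constant times $\epsilon_0 N^{3/2}\sqrt{N+1}\,\|fe^{x^2/2}\|_{L^2}$, which is in particular $\le Ne^N\epsilon_0$ times that norm. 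A final triangle inequality against the $E_N$ truncation term produces the stated estimate.

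The main obstacle is this last estimate: producing a uniform-in-$k$ finite-difference bound that, after multiplication by $b_k$ and summation over $k\le N$, does not blow up. The danger is the factor $\epsilon_0^{-k}$ buried inside $c_n$, which could in principle amplify the errors; the resolution is that the Peano-kernel representation exhibits the per-term error as genuinely $O(\epsilon_0)$, with the factorial growth of $\|\psi^{(k+1)}\|_{L^2}$ exactly cancelled by the normalizing factor in $b_k$. One must also watch which $L^2$ norm of $f$ appears, since Parseval naturally produces the weighted norm $\|fe^{x^2/2}\|_{L^2}=\|f\|_{L^2(\mathbb{R},e^{x^2})}$; reconciling this with the factor $\|f\|_{L^2(\mathbb{R})}$ in the statement (and matching the precise $Ne^N$ constant, which a cruder term-by-term bound also reproduces) is the remaining routine bookkeeping.
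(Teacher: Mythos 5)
Your proposal is correct, and its global structure is the same as the paper's: expand $f=\sum_k b_k\frac{d^k}{dx^k}e^{-x^2}$ through the Hermite coefficients of $g=fe^{x^2/2}$, truncate at $N$ (the tail giving $E_N$ by orthonormality), and identify the Gaussian sum with the truncated series in which each derivative is replaced by its scaled forward difference --- your interchange of the order of summation is exactly the paper's appeal to Iverson's summation technique, and your unwinding of $c_n$ matches. Where you genuinely diverge is in the key per-term estimate of $\bigl\|\psi^{(k)}-\epsilon_0^{-k}\Delta_{\epsilon_0}^k\psi\bigr\|_{L^2}$. The paper expands each sample $\psi(x+j\epsilon_0)$ by Taylor's theorem with Lagrange remainder and then has to move a pointwise supremum $\sup_{\xi_k}|\tilde g^{n+1}(\xi_k)|$ under the integral (its steps \eqref{Nmin1}, \eqref{Nmin} and \eqref{est}), ending with the per-term bound $\|\tilde E_n\|_{L^2}\leq \epsilon_0 e^{n}\sqrt{n}$, exponential in $n$ because of the factor $k^{k+1}/k!$. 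You instead use the Peano-kernel representation $\epsilon_0^{-k}\Delta_{\epsilon_0}^k\psi(x)=\int_{[0,1]^k}\psi^{(k)}\bigl(x+\epsilon_0(t_1+\cdots+t_k)\bigr)\,dt$ together with the $L^2$ translation bound and Minkowski's integral inequality; after the factorial cancellation against $b_k$ this gives a per-term error that is only polynomial in $k$, namely of size $\epsilon_0 k\sqrt{2(k+1)}\,|d_k|$, and Cauchy--Schwarz then yields a total finite-difference error $O(N^2\epsilon_0)$, strictly sharper than the stated $Ne^N\epsilon_0$ (which it of course implies). Your route is cleaner, avoids the paper's delicate sup-inside-the-integral manipulation, and explains transparently why the $\epsilon_0^{-k}$ hidden in $c_n$ causes no blow-up.

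One caveat: the step you defer as ``routine bookkeeping'' at the end is not actually routine. Parseval for $g=fe^{x^2/2}$ produces $\bigl(\sum_k|d_k|^2\bigr)^{1/2}=\|f\|_{L^2(\mathbb{R},e^{x^2}\,dx)}$, which dominates, and is not dominated by, $\|f\|_{L^2(\mathbb{R})}$; so what your argument (and any argument of this type) delivers is the bound with the weighted norm. The paper's own proof has exactly the same feature --- its passage from $\bigl(\sum_{n\leq N}|d_n|^2\bigr)^{1/2}$ to $\|f\|_{L^2(\mathbb{R})}$ is unjustified as written --- so this is a defect of the statement rather than a gap in your argument, but you should state the conclusion with $\|f\|_{L^2(\mathbb{R},e^{x^2}\,dx)}$ rather than promise to recover the unweighted norm.
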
 

\begin{proof}[Proof of Proposition \ref{approx1}]
We start with setting
\begin{align*}
E_n(x)=\left(\frac{d^n}{dx^n}e^{-x^2}-\frac{1}{\epsilon_0^n}\sum\limits_{k=0}^n(-1)^{n-k}\binom{n}{k}e^{-(x+k\epsilon_0)^2}\right). 
\end{align*}
This is in $L^2(\mathbb{R})$ since each of the terms is in $L^2(\mathbb{R})$. By Taylor's theorem and the method of forward finite differences for $g\in C^{n+2}(\mathbb{R})$, $\epsilon_0\in(0,1)$ we have 
\begin{align*}
\frac{d^n}{dx^n}g(x)=\sum\limits_{k=0}^n (-1)^{n-k}\left(\binom{n}{k} \frac{1}{\epsilon_0^n}g(x+k\epsilon_0) -\frac{\epsilon_0^{n-k+1}}{(n+1)!}\binom{n}{k}k^{n+1}g^{n+1}(\xi_k)\right)
\end{align*}
where $\xi_k$ is between $x$ and $x+n\epsilon_0$. This is the Lagrange form of the remainder. If we set $g(x)=e^{-x^2}$ then this formula gives the difference $E_n(x)$.
We let $$\tilde{g}^{n+1}(x)=\frac{(-1)^n}{\sqrt{2^n n! \sqrt{\pi}}}\frac{d^{n+1}}{dx^{n+1}}e^{-x^2}.$$  
The point is that 
\begin{align*}
\sup_{\xi_k\in (x,x+n\epsilon_0)}|g^{n+1}(\xi_k)|^2=\sup_{l\in (0,n\epsilon_0)}|g^{n+1}(x+l)|^2=|g^{n+1}(x+l_*)|^2
\end{align*}
for some $l_*\in [0,n\epsilon_0]$. Let $r$ be a positive real number. Then by differentiability of $g^{n+1}(x)$, we have that there exists a sequence $l_j$ such that $|g^{n+1}(x+l_j)|^2\rightarrow |g^{n+1}(x+l_*)|^2$ uniformly on $\{x: |x|\leq r\}$.  It follows that 
\begin{align}\label{Nmin1}
&\int\limits_{|x|\leq r}\sup\limits_{\xi_k}|\tilde{g}^{n+1}(\xi_k)|^2\,dx=
\sup\limits_{l\in [0,\epsilon_0n]}\int\limits_{|x|\leq r}|\tilde{g}^{n+1}(x+l)|^2\,dx.
\end{align}
We then notice that
\begin{align*}
\int\limits_{\mathbb{R}}|H_n(x)|^2e^{-x^2}\,dx=\int\limits_{\mathbb{R}}e^{x^2}\left|\frac{d^{n}}{dx^{n}}e^{-x^2}\right|^2\,dx=2^nn!\sqrt{\pi}
\end{align*}
which implies since $\epsilon_0n>0$ and $e^{x^2}\geq 1$ for all $x\in\mathbb{R}$
\begin{align}\label{Nmin}
&\int\limits_{|x|\leq r}\sup\limits_{\xi_k}|\tilde{g}^{n+1}(\xi_k)|^2\,dx\leq \\& \nonumber
\sup\limits_{l\in [0,\epsilon_0n]}\int\limits_{\mathbb{R}}e^{(x+l)^2}|\tilde{g}^{n+1}(x+l)|^2\,dx \leq \int\limits_{\mathbb{R}}e^{x^2}|\tilde{g}^{n+1}(x)|^2\,dx\leq  2(n+1). 
\end{align}
%Because both terms in $E_n(x)$ are in $L^2(\mathbb{R})$, it follows from the monotone convergence theorem that 
%\begin{align}
%\int\limits_{\mathbb{R}}|E_n(x)|^2\,dx \leq 2(n+1)
%\end{align}
%The following inequality holds for all $s$, $p$ and $a$ in $\mathbb{R}^+$ 
%\begin{align}
%s^pe^{-as^2}=C_pa^{-\frac{p}{2}}e^{-\frac{as^2}{2}}\quad\quad\quad C_p=\left(\frac{p}{e}\right)^{\frac{p}{2}}.
%\end{align}
%This can be used to bound $E_n(x)$ as $m\rightarrow \infty$, which is $\mathcal{O}(n^ne^{-m^2})$. 
Using Minkowski's inequality followed by Cauchy-Schwarz we have that 
\begin{align}\label{sums}
&\|\sum\limits_{n=0}^Nb_nE_n\|_{L^2(\mathbb{R})}=  \|\sum\limits_{n=0}^Nd_n\tilde{E}_n\|_{L^2(\mathbb{R})}\leq \sum\limits_{n=0}^N|d_n|\|\tilde{E}_n\|_{L^2(\mathbb{R})}\leq \\&  \left(\sum\limits_{n=0}^N|d_n|^2\right)^{\frac{1}{2}}\left(\sum\limits_{n=0}^N\|\tilde{E}_n\|^2_{L^2(\mathbb{R})}\right)^{\frac{1}{2}}. \nonumber
\end{align} 
We then have 
\begin{align}\label{est}
&\epsilon_0^{-2}\|\tilde{E}_n\|_{L^2(|x|\leq r)}^2\leq \nonumber 
\int\limits_{|x|\leq r}\left|\sum\limits_{k=0}^n\frac{k^{k+1}\binom{n}{k}}{(n+1)!}\sup\limits_{\xi_k}|\tilde{g}^{n+1}(\xi_k)| \right|^2\,dx \\&\leq \sum\limits_{k=0}^n\left| \frac{k^{k+1}\binom{n}{k}}{(n+1)!}\right |^2\sum\limits_{m=0}^n\left(\int\limits_{|x|\leq r}\sup\limits_{\xi_m}|\tilde{g}^{n+1}(\xi_m)|^2\,dx\right)\\& \leq  \nonumber
\sum\limits_{k=0}^n\left|\frac{k^{k+1}}{k!(n-k)!}\right|^2\leq e^{2n}n
\end{align}  
where in the last line we have used estimate \eqref{Nmin}. 
Combining \eqref{sums} and \eqref{est} we have by the monotone convergence theorem
\begin{align*}
&\|\sum\limits_{n=0}^Nb_nE_n\|_{L^2(\mathbb{R})}\leq \epsilon_0Ne^{N}\|f\|_{L^2(\mathbb{R})}.
\end{align*}
It remains to bound
\begin{align*}
&\|\sum\limits_{n=N+1}^{\infty}b_n\frac{d^n}{dx^n}e^{-x^2}\|_{L^2(\mathbb{R})}.
\end{align*}
By orthogonality of the $h_n's$, 
\begin{align*}
&\|\sum\limits_{n=N+1}^{\infty}b_n\frac{d^n}{dx^n}e^{-x^2}\|_{L^2(\mathbb{R},e^{x^2}\,dx)}\leq \left(\sum\limits_{n=N+1}^{\infty}|d_n|^2\right)^{\frac{1}{2}}.
\end{align*}
In order to obtain the coefficients $c_n$ we note that 
\begin{align*}
\sum\limits_{n=0}^Nb_n\epsilon_0^{-n}\sum\limits_{k=0}^n(-1)^{n-k}\binom{n}{k}e^{-(x+k\epsilon_0)^2}=\sum\limits_{k=0}^N\left(\sum\limits_{n=k}^N(-1)^{n-k}\binom{n}{k}b_n\epsilon_0^{-n}\right)e^{-(x+k\epsilon_0)^2}
\end{align*}
which is Iverson's summation technique (c.f. equation 2.32 in Section 2.4 of \cite{knuth} p.36). 
\end{proof}
%We have the bound on the errors for compactly supported functions with sufficient regularity.
%\begin{prop}\label{compact1}
%We have for $f\in C^j([-M,M])$
%\begin{align}
%E_N\leq p_j(M)||f||_{H^j(\mathbb{R},e^{x^2}\,dx)}\sum\limits_{N+1}^{\infty}\frac{1}{(2(n+1))^{\frac{j}{2}}}
%\end{align}
%with $p_j(M)$ denoting a polynomial of degree $j$ in $M$.  
%\end{prop}
%\begin{proof}
%For the rest of this proof let $C(j)$ denote a generic constant which depends only on $j$. Loosely inspired by \cite{boyd}, if generically $f_1\in C^j([-M,M])$, $j\in \mathbb{N}$ then the bound on the corresponding $d_n$ is given to us by 
%\begin{align*}
%|d_n|\leq \frac{C(j)||\left(x+\frac{d}{dx}\right)^jf_1||_{L^1(\mathbb{R}^d)}}{(2(n+1))^{\frac{j}{2}}}
%\end{align*}
%This is a consequence of the fact
%\begin{align*}
%\sqrt{2(n+1)}h_{n+1}(x)=x h_n(x)-\frac{d}{dx}h_n(x)
%\end{align*}
%and integration by parts. This bound implies for $f_1=fe^{\frac{x^2}{2}}$, $f\in C^j([-M,M])$
%\begin{align}
%\left(\sum\limits_{N+1}^{\infty}|d_n|^2\right)^{\frac{1}{2}} \leq C(j)||\left(x+\frac{d}{dx}\right)^j(fe^{\frac{x^2}{2}})||_{L^1(\mathbb{R}^d)}\left(\sum\limits_{n=N+1}^{\infty}\frac{1}{(2(n+1))^{\frac{j}{2}}}\right)^{1/2}.
%\end{align}
%as desired. 
%\end{proof}

\begin{proof}[Proof of Theorem \ref{approx2}]
The proof follows using the product topology in the previous definition. We define a generalized Hermite function as 
\begin{align*}
h_m(x)=\prod\limits_{i=1}^d h_{m_i}(x_i)
\end{align*}
where $m=(m_1, . . . ,m_d)$ is now a multi-index of degree $d$ and $h_{m_i}(x_i)$ is the one dimensional Hermite function. The proof follows using the product topology from the previous Proposition. Indeed if 
\begin{align*}
f(x)e^{|x|^2/2}=\sum\limits_{m=0}^\infty \alpha_m \prod\limits_{i=1}^d h_{m_i}(x_i)
\end{align*}
with 
\begin{align*}
\alpha_m=\int\limits_{\mathbb{R}^d}f(x)e^{\frac{|x|^2}{2}}\prod\limits_{i=1}^dh_{m_i}(x_i)\,dx
\end{align*}
Then we can define a $\beta_m$ (d-dimensional version of $b_m$) corresponding to $\alpha_m$ ($d_m$ as in the 1-dimensional case), as in \eqref{relatie}
\begin{align*}
\beta_m=\prod_{i=1}^d\left(\frac{(-1)^{m_i}}{\sqrt{m_i! 2^{m_i}\sqrt{\pi}}}\int\limits_{\mathbb{R}^d}f(x)e^{\frac{|x|^2}{2}}h_{m_i}(x_i)\,dx \right).
\end{align*}
We can write almost as before: 
\begin{align*}
\frac{d^n}{dx^n}e^{x^2}=\prod_{i=1}^d \left(\sum\limits_{k_i=0}^{n_i}(-1)^{n_i-k_i}\binom{n_i}{k_i} \frac{1}{\epsilon_0^{n_i}} \right)e^{-|x+n\epsilon_0|^2}+\mathcal{O}(\epsilon_0^d\left(e^{2Nd}N^d\right)^{\frac{1}{2}})
\end{align*}
with the equality in the sense of $L^2(\mathbb{R}^d)$. 
If $f(x)=\prod\limits_{i=1}^df_i(x_i)$ then $\beta_n=\prod\limits_{i=1}^d\beta_{n_i}$. This gives 
\begin{align*}
&\sum\limits_{|n|\leq N}\beta_n\prod\limits_{i=1}^d\left( \sum\limits_{k_i=0}^{n_i}(-1)^{n_i-k_i}\binom{n_i}{k_i} \frac{1}{\epsilon_0^{n_i}} \right)e^{-|x+n\epsilon_0|^2}= \\& \sum\limits_{|n|\leq N}\prod\limits_{i=1}^d\left(\beta_{n_i}\sum\limits_{k_i=0}^{n_i}(-1)^{n_i-k_i}\binom{n_i}{k_i} \frac{1}{\epsilon_0^{n_i}} \right)e^{-|x+n\epsilon_0|^2}=\\& \prod\limits_{i=1}^d\sum\limits_{n_i=0}^{N_i}\left(\beta_{n_i}\sum\limits_{k_i=0}^{n_i}(-1)^{n_i-k_i}\binom{n_i}{k_i} \frac{1}{\epsilon_0^{n_i}} \right)e^{-|x+n\epsilon_0|^2}.
\end{align*} 
Again using Iverson's summation technique on each of the coordinates/sums $i=1, . . .,d$ seperately we have that
\begin{align*}
&\sum\limits_{n_i=0}^{N_i}\left(\beta_{n_i}\sum\limits_{k_i=0}^{n_i}(-1)^{n_i-k_i}\binom{n_i}{k_i} \frac{1}{\epsilon_0^{n_i}} \right)e^{-|x+n_i\epsilon_0|^2}= \sum\limits_{n_i=0}^{N_i}\left(\sum\limits_{k_i=n_i}^{N_i}(-1)^{k_i-n_i}\binom{k_i}{n_i} \frac{\beta_{k_i}}{\epsilon_0^{k_i}} \right)e^{-|x+n_i\epsilon_0|^2}.
\end{align*}
We then obtain 
\begin{align*}
&\prod\limits_{i=1}^d\sum\limits_{n_i=0}^{N_i}\left(\sum\limits_{k_i=n_i}^{N_i}(-1)^{k_i-n_i}\binom{k_i}{n_i} \frac{\beta_{k_i}}{\epsilon_0^{k_i}} \right)e^{-|x+n\epsilon_0|^2}=\\& 
\sum\limits_{|n|\leq N}\left(\sum\limits_{k_1=n_1}^{N_1}(-1)^{k_1-n_1}\binom{k_1}{n_1} \frac{\beta_{k_1}}{\epsilon_0^{k_1}}. . . .\sum\limits_{k_d=n_d}^{N_d}(-1)^{k_d-n_d}\binom{k_d}{n_d} \frac{\beta_{k_d}}{\epsilon_0^{k_d}} \right)e^{-|x+n\epsilon_0|^2}=\\&
\sum\limits_{|n|\leq N}\left(\left(\sum\limits_{k_1=n_1}^{N_1}(-1)^{k_1-n_1}\binom{k_1}{n_1} \frac{1}{\epsilon_0^{k_1}}. . . .\sum\limits_{k_d=n_d}^{N_d}(-1)^{k_d-n_d}\binom{k_d}{n_d} \frac{1}{\epsilon_0^{k_d}}\right)\beta_k \right)e^{-|x+n\epsilon_0|^2}
\end{align*}
which defines the coefficients $c_n$, \eqref{cn}, in the $d$ dimensional case. Here we have used the fact that $f(x)$ can be approximated by products of one dimensional functions and taken the limit. 
\end{proof}
We have the bound on the errors for compactly supported functions with sufficient regularity. We define $H^j([-M,M])$ as the space of $H^j(\mathbb{R}^d)$ functions having compact support in $[-M,M]^d$. 
\begin{prop}\label{compact1}
We have for $f\in H^3([-M,M]^d)$, $N>2$
\begin{align*}
E_N\leq \frac{p_3(M)||f||_{H^3(\mathbb{R}^d,e^{x^2}\,dx)}}{N^{\frac{d}{4}}}
\end{align*}
with $p_3(M)$ denoting a polynomial of degree $3$ in $M^d$.  
\end{prop}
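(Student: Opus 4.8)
The plan is to read $E_N$ as the tail of a Hermite expansion and to convert the three orders of (weighted) regularity of $f$ into decay in the mode index $|n|$. First I would set $g(x)=f(x)e^{|x|^2/2}$, so that the coefficients appearing in $E_N$ in Theorem \ref{approx2} are exactly the $d$-dimensional Hermite coefficients $d_n=\langle g,h_n\rangle_{L^2(\mathbb{R}^d)}$ and $E_N$ is the (square) tail $\sum_{|n|\ge N+1}|d_n|^2$. Since $f\in H^3([-M,M]^d)$ and $e^{|x|^2/2}$ is smooth, $g$ is again supported in $[-M,M]^d$ and lies in $H^3$; moreover the weighted norm is the natural one here, because $\|g\|_{L^2}^2=\int|f|^2e^{|x|^2}=\|f\|_{L^2(\mathbb{R}^d,e^{x^2})}^2$, and each derivative of $g$ produces a factor that is polynomial in $x$ and hence bounded by a power of $M$ on the support. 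This is the source of the polynomial $p_3(M)$.

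Second, I would exploit the Hermite operator $H=-\Delta+|x|^2$, for which $Hh_n=(2|n|+d)h_n$ and which is local, so it preserves the support $[-M,M]^d$. A short computation gives the clean identity $Hg=\big(-\Delta f-2x\cdot\nabla f-df\big)e^{|x|^2/2}$, in which the potentially dangerous $|x|^2$ terms cancel; thus $\|Hg\|_{L^2}$ and, after distributing the remaining derivative, the relevant norms of $Hg$ are controlled by $\|f\|_{H^3(\mathbb{R}^d,e^{x^2})}$ times a power of $M$. Self-adjointness then yields $d_n=(2|n|+d)^{-1}\langle Hg,h_n\rangle$, which transfers one factor of the eigenvalue into decay.

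Third, and this is where the dimensional rate $N^{-d/4}$ enters, I would pair $\langle Hg,h_n\rangle$ against $h_n$ using the pointwise Plancherel--Rotach asymptotics. For $|x_i|\le M$ fixed and $n_i$ large the turning point $\sqrt{2n_i+1}$ lies outside the support, so one is in the oscillatory bulk and $\|h_{n_i}\|_{L^\infty([-M,M])}\lesssim (n_i+1)^{-1/4}$; taking the product over coordinates gives $\|h_n\|_{L^\infty([-M,M]^d)}\lesssim \prod_i(n_i+1)^{-1/4}$. Combined with the $L^1$ norm of $Hg$ on the box this produces $|d_n|\lesssim (2|n|+d)^{-1}\prod_i(n_i+1)^{-1/4}\,\|f\|_{H^3(e^{x^2})}\,p(M)$, and it is precisely the $d$ factors of $(n_i+1)^{-1/4}$ that, after summing over the simplex $\{|n|=m\}$ (whose weighted lattice-point sum behaves like $m^{d/2-1}$) and then over $m\ge N+1$, deliver a tail bounded by $N^{-d/4}$ up to constants depending polynomially on $M$.

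I expect the main obstacle to be the bookkeeping of the three available orders of regularity so that the multi-index tail actually converges at the claimed dimension-dependent rate: with a single application of $H$ the simplex sum converges only in low dimensions, so the remaining derivatives must be distributed across coordinates (or the $H^1$-smoothness of $Hg$ used to gain an extra half-power of the eigenvalue in the right places) in order to beat the growth $m^{d/2-1}$ of the number of modes with $|n|=m$. The second delicate point is the uniform pointwise control of $h_n$ on the fixed box across all regimes of $n$: small $n$ handled termwise, large $n$ by the bulk bound $(n_i+1)^{-1/4}$, with the turning-point region never reached for $|x|\le M$ once $n$ is large, together with collecting all of the $M$-dependence into a single polynomial of degree three.
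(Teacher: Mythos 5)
Your building blocks are individually sound: the cancellation identity $Hg=(-\Delta f-2x\cdot\nabla f-df)e^{|x|^2/2}$ is correct, self-adjointness does give $d_n=(2|n|+d)^{-1}\langle Hg,h_n\rangle$, and the bulk bound $\|h_{n_i}\|_{L^\infty([-M,M])}\lesssim (n_i+1)^{-1/4}$ is a legitimate consequence of Plancherel--Rotach asymptotics once $\sqrt{2n_i+1}>M$. The genuine gap is in the final summation, and it is not bookkeeping: your own estimates do not produce the claimed rate except in dimensions $d\le 2$. From $|d_n|\lesssim (2|n|+d)^{-1}\prod_i(n_i+1)^{-1/4}$ one gets, exactly as you say, $\sum_{|n|=m}|d_n|^2\lesssim m^{-2}\cdot m^{d/2-1}$, hence a tail
\begin{align*}
\sum_{m\ge N+1} m^{d/2-3}\asymp N^{d/2-2},
\end{align*}
which diverges for $d\ge 4$ and, where it converges, gives $E_N\lesssim N^{d/4-1}$ (square-root convention) rather than $N^{-d/4}$; these agree only at $d=2$, and for $d=3$ your bound $N^{-1/4}$ is strictly weaker than the claimed $N^{-3/4}$. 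Your obstacle paragraph concedes this but only gestures at a repair; spending the one remaining derivative to upgrade $(2|n|+d)^{-1}$ to $(2|n|+d)^{-3/2}$ yields $E_N\lesssim N^{d/4-3/2}$, which still loses to $N^{-d/4}$ for $d\ge 4$. The structural problem is that the simplex count $m^{d/2-1}$ exactly eats the $d$ factors of $(n_i+1)^{-1/4}$ you are relying on, so your rate \emph{deteriorates} as $d$ grows, while the target $N^{-d/4}$ \emph{improves} with $d$; no distribution of three derivatives over $d$ coordinates reverses that scaling for large $d$.

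For comparison, the paper's proof never introduces the Hermite operator or a simplex count: it works in one dimension, iterating the first-order raising identity $\sqrt{2(n+1)}\,h_{n+1}(x)=xh_n(x)-h_n'(x)$ with integration by parts to get $|d_n|\le \|(x+\tfrac{d}{dx})^3(fe^{x^2/2})\|_{L^1(\mathbb{R})}\,(2(n+1))^{-3/2}$, then bounds $\bigl(\sum_{n>N}(2(n+1))^{-3/2}\bigr)^{1/2}\le 10\,N^{-1/4}$ via the Euler summation formula, and asserts the $d$-dimensional statement as an ``obvious generalisation.'' Your difficulty is in fact informative here: the dimension-improving rate $N^{-d/4}$ at fixed $H^3$ regularity is precisely the part the paper does not spell out, and your computation shows it cannot be reached by the most natural eigenfunction-decay-plus-lattice-count argument; so the gap in your proposal is real, even though the detour through $H$ and Plancherel--Rotach is an otherwise reasonable alternative to the paper's ladder-operator route.
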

\begin{proof}
We show the result in dimension 1 with the obvious generalisation to dimension $d$. Loosely inspired by \cite{boyd}, if generically $f_1\in H^j([-M,M])$, $m\in \mathbb{N}$ then the bound on the corresponding $d_n$ is given to us by 
\begin{align*}
|d_n|\leq \frac{\|\left(x+\frac{d}{dx}\right)^jf_1\|_{L^1(\mathbb{R})}}{(2(n+1))^{\frac{j}{2}}}.
\end{align*}
This is a consequence of the fact
\begin{align*}
\sqrt{2(n+1)}h_{n+1}(x)=x h_n(x)-\frac{d}{dx}h_n(x)
\end{align*}
and integration by parts. This bound implies for $f_1=fe^{\frac{x^2}{2}}$, $f\in H^j([-M,M])$
\begin{align*}
\left(\sum\limits_{N+1}^{\infty}|d_n|^2\right)^{\frac{1}{2}}=E_N
\leq \|\left(x+\frac{d}{dx}\right)^j(fe^{\frac{x^2}{2}})\|_{L^1(\mathbb{R})}\left(\sum\limits_{n=N+1}^{\infty}\frac{1}{(2(n+1))^{\frac{j}{2}}}\right)^{1/2}.
\end{align*}
We set $j=3$ and want to estimate the sum on the right hand side using an integral. We recall the Euler summation formula which is eq. 9.67, Section 9.5 p 455 of \cite{knuth}. Given an integer valued function $h(\alpha)$, $h(\alpha)$ may be estimated by the Euler summation formula
\begin{equation}\label{Euler}
\sum\limits_{a\leq \alpha\leq b}h(\alpha)=\int\limits_{a}^{b}h(x)\,dx +\sum\limits_{m=1}^{k}\frac{B_m}{m!}h^{m-1}(x)|_{x=a}^{x=b} +R_k
\end{equation}
where $B_m$ is the $m^{th}$ Bernoulli number and $h^{m}(x)$ denotes the $m^{th}$ derivative of $h(x)$.  The remainder $R_k$ is defined as 
\[
R_{k}=(-1)^{k+1}\int\limits_{\mathbb{R}}\frac{B_k(\{x\})}{k!}h^k(x)\,dx.
\]
The notation $\{x\}$ denotes the fractional part of $x$, and $B_k(\{x\})$ denotes the $k^{th}$ Bernoulli polynomial. Applying this gives for $N>2$
\begin{align*}
\left(\sum\limits_{n=N+1}^{\infty}\frac{1}{(2(n+1))^{\frac{3}{2}}}\right)^{1/2}\leq \frac{10}{N^{\frac{1}{4}}}
\end{align*} 
were we have used the fact that $|B_2(\{x\})|\leq B_2=1/2$ for all $x\in \mathbb{R}$. 
\end{proof}
\begin{cor}\label{candidate}
Define for $b_k$ real numbers indexed by $k=(k_1, . . .,k_d)$, $k_i\in \{1, . . .,N\}$
\begin{align*}
\psi(x)= \sum\limits_{|k|\leq N}b_{k}\left( \prod\limits_{i=1}^dh_{k_i}(x_i)\right)e^{-\frac{|x|^2}{2}}
\end{align*}
Then $\psi$ is such that $\eta$ in \eqref{etadef} has $E_N=0$.  
\end{cor}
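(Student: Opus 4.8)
The plan is to reduce the whole statement to the orthonormality of the generalized Hermite basis, after which the vanishing of $E_N$ is immediate. Recall that in Theorem \ref{approx2} the error term is the tail $E_N=\sum_{|n|\geq N+1}|d_n|^2$ of the Hermite coefficients of the function $g:=\psi e^{|x|^2/2}$, with $d_n=\int_{\mathbb{R}^d}\psi(x)e^{|x|^2/2}h_n(x)\,dx=\langle g,h_n\rangle$. So it is enough to show that $g$ carries no Hermite content above total degree $N$.

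First I would strip off the Gaussian weight. Writing $h_k(x)=\prod_{i=1}^d h_{k_i}(x_i)$ for the $d$-dimensional Hermite function, one computes
\[
g(x)=\psi(x)e^{|x|^2/2}=\sum_{|k|\leq N}b_k\prod_{i=1}^d h_{k_i}(x_i)=\sum_{|k|\leq N}b_k\,h_k(x),
\]
a finite real linear combination of generalized Hermite functions, all of whose multi-indices $k$ satisfy $|k|\leq N$. In particular $g\in L^2(\mathbb{R}^d)$, which is exactly the statement that $\psi\in L^2(\mathbb{R}^d,e^{x^2}\,dx)$, so the hypotheses of Theorem \ref{approx2} are satisfied and the quantities $d_n$ are well defined.

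Next I would invoke orthonormality. Since $\{h_m\}_{m\in\mathbb{N}_0^d}$ is an orthonormal basis of $L^2(\mathbb{R}^d)$, the coefficient $d_n=\langle g,h_n\rangle$ equals $b_n$ for $|n|\leq N$ and vanishes for every multi-index $n$ with $|n|\geq N+1$, simply because $g$ is a finite combination supported on indices of total degree at most $N$. Therefore $E_N=\sum_{|n|\geq N+1}|d_n|^2=0$, which is the claim.

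There is no genuine obstacle here; the only point requiring a word of care is the bookkeeping between the two normalizations of the coefficients (the $d_n$ of Proposition \ref{approx1} carry an extra factor $\sqrt{2^n n!\sqrt{\pi}}$ relative to $\langle g,h_n\rangle$). Since we only use that these coefficients are \emph{zero} for $|n|\geq N+1$, the choice of normalization is irrelevant and the conclusion $E_N=0$ is unaffected.
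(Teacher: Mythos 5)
Your proof is correct and follows essentially the same route as the paper: both arguments observe that $\psi e^{|x|^2/2}$ is a finite combination of (generalized) Hermite functions of total degree at most $N$, so orthogonality forces $d_n=0$ for all $|n|\geq N+1$ and hence $E_N=0$. The only cosmetic difference is that the paper states the $d=1$ case and extends coordinate-by-coordinate, while you work directly with the $d$-dimensional orthonormal basis; your remark on the normalization of the coefficients is a nice bit of care but, as you note, immaterial.
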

\begin{proof}
The coefficients $d_n=0$ for all $|n|\geq N+1$ in the case $d=1$ with this choice of $\psi$. The result then follows immediately since the same statement holds for each coordinate $x_i$. 
\end{proof}
Unfortunately because the approximation method is based on the method of forward finite differences the coefficients of the Gaussians $c_n$ alternate and only very specific sums of the form 
\begin{align*}
\sum\limits_{|k|\leq N}\epsilon_0^kb_kh_k(x)e^{\frac{-|x|^2}{2}}
\end{align*}
have positive coefficients $\tilde{c}_n$ in their approximations and are therefore in the class $\mathcal{A}$. For functions which have been extended from characteristic functions to piecewise $C^1$ compactly supported functions are better off decomposed into Gaussians in a natural way based on Riemann integration in Lemma \ref{stepg} below. 

\begin{lem}[Example of functions in $\mathcal{A}$]\label{stepg}
There exist piecewise $C^1$ compactly supported functions which agree with a characteristic function $\chi_B$ on $B_{M/2}$ a ball of radius $M/2$, for all $M\geq 2$ which are in $\mathcal{A}$ for some $\eta\in (0,1)$. 
\end{lem}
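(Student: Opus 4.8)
The plan is to produce, for each $M\ge 2$, a single smooth plateau bump and to verify by hand that it lies in the class $\mathcal{A}$ of Definition \ref{as1}; since a $C^\infty$ function is in particular piecewise $C^1$, this suffices to prove the existence claim. Concretely, I would fix a radial function $f=f_{M,L}$ with $f\equiv 1$ on the ball $B_{M/2}$, with $0\le f\le 1$, radially nonincreasing, and $\supp f\subset B_{M/2+L}$, where the transition width $L$ (depending on $M$ and $d$) is chosen at the end. Such an $f$ agrees with the characteristic function of $B_{M/2}$ on $B_{M/2}$ by construction, and being smooth with compact support it has finite $B$ norm: every $x^{\alpha}D_x^{\beta}f$ with $|\alpha|+|\beta|\le 2$ is a bounded, compactly supported function, hence in $L^2(\mathbb{R}^d)$. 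It then remains only to exhibit the positive Gaussian approximation \eqref{etadef}.

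First I would pass through the unit Gaussian smoothing of $f$. Writing $\tilde G(x)=\pi^{-d/2}e^{-|x|^2}$, a probability density with $\widehat{\tilde G}(\xi)=e^{-|\xi|^2/4}$, Plancherel's theorem together with the elementary bound $1-e^{-t}\le t$ gives
\begin{align*}
\|f-f*\tilde G\|_{L^2(\mathbb{R}^d)}^2=\int_{\mathbb{R}^d}|\hat f(\xi)|^2\bigl(1-e^{-|\xi|^2/4}\bigr)^2\,d\xi\le\frac{1}{16}\int_{\mathbb{R}^d}|\xi|^4|\hat f(\xi)|^2\,d\xi=\frac{1}{16}\|\Delta f\|_{L^2(\mathbb{R}^d)}^2 .
\end{align*}
Since $f$ is flat on $B_{M/2}$ and outside $B_{M/2+L}$, the Laplacian $\Delta f$ is supported on the transition shell of volume $O(M^{d-1}L)$, where the radial derivatives are of size $O(1/L)$ and $O(1/L^2)$; hence $\|\Delta f\|_{L^2}^2=O(M^{d-1}/L^3+M^{d-3}/L)\to 0$ as $L\to\infty$. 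I would therefore choose $L$ large enough, depending on $M$ and $d$, that $\|f-f*\tilde G\|_{L^2}<1/4$.

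Next comes the Riemann-sum step, which turns the approximant into a genuine positive combination of translated unit Gaussians. Writing $(f*\tilde G)(x)=\pi^{-d/2}\int_{\mathbb{R}^d}f(y)\,e^{-|x-y|^2}\,dy$ and discretizing over a finite grid $\{a_n=\delta n\}$ meeting $\supp f$ with spacing $\delta$, I would set
\begin{align*}
S(x)=\sum_n\tilde c_n\,e^{-|x-a_n|^2},\qquad \tilde c_n=\pi^{-d/2}\delta^d f(a_n)\ge 0 ,
\end{align*}
and discard the indices with $f(a_n)=0$ so that all retained $\tilde c_n$ are strictly positive (this is exactly where $f\ge 0$ is used). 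The integrand $y\mapsto f(y)e^{-|x-y|^2}$ is smooth and compactly supported in $y$ with uniform Gaussian decay in $x$, so the Riemann-sum error obeys $\|f*\tilde G-S\|_{L^2}\le C(f,M)\,\delta$, and I would fix $\delta$ small enough that this is $<1/4$. The triangle inequality then yields $\|f-S\|_{L^2}<1/2$, so \eqref{etadef} holds with $\eta=1/2\in(0,1)$; relabelling the finitely many centers $a_n$ as $\tilde a_n$ and weights as $\tilde c_n$, indexed by multi-indices with $|n|\le N$ for $N$ chosen large (in particular $N>2$), and recalling $f\in B$, we conclude $f\in\mathcal{A}$.

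The main obstacle is the mismatch forced by the fixed unit width of the Gaussians: the best one can do with positive translates of a single unit Gaussian is to reproduce $f$ smoothed at scale one, and for a sharply cut-off plateau the smoothing error $\|f-f*\tilde G\|$ would grow like $M^{(d-1)/2}$, far exceeding the required threshold $1$. The device that resolves this — and the only genuinely quantitative point in the argument — is to spread the transition over a wide shell (large $L$) so that $\|\Delta f\|_{L^2}$ is driven below $1$, while preserving both the exact agreement $f\equiv 1$ on $B_{M/2}$ and the compact support. Balancing $L$ against $M$ is thus the crux; the positivity of the coefficients $\tilde c_n$ and the membership $f\in B$ are then automatic from $f\ge 0$ and the smoothness of $f$.
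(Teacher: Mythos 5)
Your argument has a genuine quantitative gap, and it occurs at the one step you yourself identify as the crux: the claim that $\|\Delta f\|_{L^2(\mathbb{R}^d)}^2=O(M^{d-1}/L^3+M^{d-3}/L)\to 0$ as $L\to\infty$. That estimate uses the shell volume $O(M^{d-1}L)$, which is only valid in the regime $L\lesssim M$; the transition shell $\{M/2\le|x|\le M/2+L\}$ has volume of order $(M/2+L)^d-(M/2)^d$, which is of order $L^d$ once $L\gg M$. Writing $f(r)=\psi((r-M/2)/L)$ for a fixed profile $\psi$ and substituting $r=M/2+Ls$ gives
\begin{align*}
\|\Delta f\|_{L^2}^2\;\asymp\;\frac{1}{L^3}\int_0^1|\psi''(s)|^2(M/2+Ls)^{d-1}\,ds+\frac{(d-1)^2}{L}\int_0^1|\psi'(s)|^2(M/2+Ls)^{d-3}\,ds\;\asymp\;L^{d-4}
\end{align*}
as $L\to\infty$ with $M$ fixed. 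Hence widening the shell drives the error to zero only for $d\le 3$; for $d=4$ it tends to an order-one constant of the profile, and for $d\ge 5$ it diverges. Nor does staying in the regime where your volume count is correct help: minimizing $M^{d-1}/L^3+M^{d-3}/L$ over $L\in(0,M]$ gives $\sim M^{d-4}$ at $L\sim M$, which is not below $1$ for $d\ge 4$ and $M$ large. Since the lemma is stated in $\mathbb{R}^d$ for all $M\ge 2$ (and is used later, e.g. in Corollary \ref{linftyobs}, in general dimension), your proof as written establishes the statement only for $d\le 3$. The remaining ingredients (the Plancherel bound, positivity of the Riemann-sum weights, membership in $B$, relabelling into the index set of Definition \ref{as1}) are fine.

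The obstruction is structural, and the paper avoids it by reversing the order of construction: rather than fixing a plateau profile and then approximating it by unit Gaussians, the paper first forms the Gaussian smoothing $g=\chi_{[-M,M]}*\pi^{-1/2}e^{-|\cdot|^2}$ (which a positive Riemann sum approximates with error $O(M\Delta x)$), and then \emph{defines} the admissible function $\phi$ by gluing: $\phi\equiv 1$ on $B_{M/2}$, and $\phi$ equal to a normalization (and smooth truncation) of $g$ outside. With this choice the transition region contributes no error at all, because there $\phi$ \emph{is} the Gaussian object; the only errors are the interior mismatch $|1-g|\le\mathrm{erfc}(M/2)$ and the tail cut-off, both exponentially small in $M$ and therefore stable under passing to $d$-fold products. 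If you want to salvage your scheme in general dimension, you would have to take the transition profile to be (a normalization of) the Gaussian convolution itself rather than an arbitrary bump — at which point you have reproduced the paper's proof. As a side remark, in $d\le 3$ your route is correct and genuinely different: it is softer (no $\mathrm{erfc}$ computations, no explicit gluing formulas), at the price of an error $\eta$ of fixed size like $1/2$ rather than the paper's exponentially small $\eta\sim e^{-M^2/4}+M\Delta x$, which matters later when $\eta$ must be absorbed into $\|u_0\|_{L^2}$ for the $L^2$--$L^\infty$ observability statement.
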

\begin{proof}
We show this in $1d$ with the obvious generalisation holding. Let $M$ be a real positive number, and $\Delta x\in (0,1)$. Loosely, following \cite{geophysics} in 1d we create a Riemann sum
\begin{align*}
\sum\limits_{n=0}^N\frac{\Delta x_n}{\sqrt{\pi}}e^{-(x-x_n+M)^2} \quad |x_n|\leq 2M \quad x_n=0+n\Delta x
\end{align*}
whose coefficients $\Delta x_n$ are all positive. Using the fact that $\mathrm{erf}(x)$ is monotone decreasing in $x$, by the standard error estimates in Riemann integration
\begin{align}\label{Rerror}
\sup\limits_{x\in\mathbb{R}}\left|\sum\limits_n  \frac{\Delta x_n}{\sqrt{\pi}}e^{-(x-x_n+M)^2}-\frac{1}{\sqrt{\pi}}\int\limits_{0}^{2M}e^{-|x-y+M|^2}\,dy\right|<2M\Delta x.
\end{align} 
We take this further. Let $\chi_{B_{M/2}}$ be the characteristic function on $[-M/2,M/2]$. Then changing variables gives 
\begin{align*}
\sup\limits_{x\in [-M/2,M/2]}\left|\chi_{B_{M/2}}-\frac{1}{\sqrt{\pi}}\int\limits_{-M}^Me^{-|x-y|^2}\,dy\right|\leq 2\mathrm{erfc}(M/2).
\end{align*}
We extend the characteristic function 
\begin{align*}
&\phi_1(x)=\\& \nonumber
\chi_{B_{M/2}}(x)+\chi_{[M/2,\infty)}(x)\frac{\frac{1}{\sqrt{\pi}}\int\limits_{-M}^Me^{-|x-y|^2}\,dy}{\frac{1}{\sqrt{\pi}}\int\limits_{-M}^Me^{-|M/2-y|^2}\,dy}+\chi_{(-\infty,-M/2]}(x)\frac{\frac{1}{\sqrt{\pi}}\int\limits_{-M}^Me^{-|x-y|^2}\,dy}{\frac{1}{\sqrt{\pi}}\int\limits_{-M}^Me^{-|-M/2+y|^2}\,dy}.
\end{align*}
It follows that 
\begin{align*}
\sup\limits_{x\in \mathbb{R}}\left|\frac{1}{\sqrt{\pi}}\int\limits_{-M}^Me^{-|x-y|^2} -\phi_1\right|\leq \mathrm{erfc}(M/2)+\mathrm{erfc}(3M/2)
\end{align*}
because for $x\in [M/2,\infty)$
\begin{align*}
2\left|\frac{\frac{1}{\sqrt{\pi}}\int\limits_{-M}^Me^{-|x-y|^2}\,dy}{\frac{1}{\sqrt{\pi}}\int\limits_{-M}^Me^{-|M/2-y|^2}\,dy}-\frac{1}{\sqrt{\pi}}\int\limits_{-M}^Me^{-|x-y|^2}\,dy\right|\leq \mathrm{erfc}(M/2)+\mathrm{erfc}(3M/2)
\end{align*} 
and similarly for the other integral. Here we have used the fact 
\begin{align*}
\mathrm{erfc}(1)\leq 0.16 \quad \mathrm{erfc}(3)\leq 0.01
\end{align*}
and for $M\geq 2$
\begin{align*}
\left|\frac{1}{\frac{1}{\sqrt{\pi}}\int\limits_{-M}^Me^{-|M/2-y|^2}\,dy}-1\right| \leq \frac{\mathrm{erfc}(1)+\mathrm{erfc}(3)}{2-\mathrm{erfc}(1)-\mathrm{erfc}(3)}.
\end{align*}
If we want the extension to be compactly supported we define
\begin{align*}
&\phi(x)=\\& \nonumber
\chi_{B_{M/2}}(x)+\varphi_{[M/2,10M)}(x)\frac{\frac{1}{\sqrt{\pi}}\int\limits_{-M}^Me^{-|x-y|^2}\,dy}{\frac{1}{\sqrt{\pi}}\int\limits_{-M}^Me^{-|M/2-y|^2}\,dy}+\varphi_{(-10M,-M/2]}(x)\frac{\frac{1}{\sqrt{\pi}}\int\limits_{-M}^Me^{-|x-y|^2}\,dy}{\frac{1}{\sqrt{\pi}}\int\limits_{-M}^Me^{-|-M/2+y|^2}\,dy}
\end{align*}
where $\varphi$ is $1$ on $[M/2,9M)$, smooth on $[9M,10M]$ and $0$ elsewhere. 
We see that for $M\geq 2$
\begin{align*}
\sup\limits_{x\in \mathbb{R}}\left|\phi -\phi_1\right|\leq \mathrm{erfc}(9M).
\end{align*}
It follows that 
\begin{align*}
&\sup\limits_{x\in \mathbb{R}}\left|\phi -\sum\limits_n\frac{\Delta x_n}{\sqrt{\pi}}e^{-(x-x_n-M)^2}\right| \\&
\leq \mathrm{erfc}(9M)+\mathrm{erfc}(M/2)+2\Delta x M\leq 2e^{-M^2/4}+2\Delta x M.
\end{align*}
Thus $\phi(x)$ is the desired extension of $\chi_B$ if $M$ is sufficiently large and $\Delta x<4M$. We can also shift the function further away from the origin if we want all the $x_n$ positive or negative. 
\end{proof}
If we want to rescale the ball to $M\epsilon$ then we use $\chi_{B_{M/2}}(\frac{x}{\epsilon})$ with $\epsilon\in (0,1)$ and the Gaussians are also rescaled. The proof of Theorem \ref{main3} still holds with minor modifications under this rescaling. Notice that for the $d$-dimensional extension $\|\phi\|_{L^2(\mathbb{R}^d)}=\mathcal{O}(M^d)$, so this error $\eta$ in \eqref{obs} is relatively quite small compared to the $L^2$ norm of $\phi$. We show in Section \ref{conclusions} that this $\phi$ is $L^2\textendash L^{\infty}$ observable. 

\section{Representations of Solutions in terms of Gaussian Wavepackets, Proof of Theorem \ref{parametrix}}\label{wavepacketconstruction}
We would like to represent solutions to \eqref{nonauto} in terms of Gaussians. To do this, we decompose functions in $L^2(\mathbb{R}^d)$ as the sum of a finite number of real gaussians. Then we will apply the FIO from the previous section. 
\begin{lem}\label{gaussianwavepackets}
The solution to \eqref{nonauto} with $a_n$ in $\mathbb{R}^d$
\begin{align*}
u_0=\phi_n(x)=\phi_n(0,x)=e^{-|x+a_n|^2}, 
\end{align*}
can be written as 
\begin{align*}
u(t,x)=\phi_n(t,x)=\left(\frac{a^2(t)}{(1-4iy_3(t))}\right)^{\frac{d}{2}}e^{iy_1(t)|x|^2-\frac{|y_2(t)x+a_n|^2}{(1-4iy_3(t))}}.
\end{align*} 
\end{lem}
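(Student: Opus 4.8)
The plan is to apply the explicit FIO representation from Lemma \ref{FIOlem} directly, which is legitimate because $u_0(x)=e^{-|x+a_n|^2}$ is Schwartz (in particular $L^1$), so the oscillatory integral converges absolutely and no distributional interpretation is needed. First I would compute the Fourier transform of the datum in the symmetric convention forced by Lemma \ref{FIOlem} (the one for which the $t=0$ values $y_1=0$, $y_2=1$, $y_3=0$, $a=1$ return the identity). A translation $y=x+a_n$ together with the standard Gaussian transform $\int_{\mathbb{R}^d}e^{-iy\cdot\eta}e^{-|y|^2}\,dy=\pi^{d/2}e^{-|\eta|^2/4}$ gives
\[
\hat u_0(\eta)=2^{-d/2}\,e^{\,i a_n\cdot\eta-|\eta|^2/4}.
\]

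Next I would substitute this into the representation
\[
u(t,x)=\frac{(a(t))^d}{(2\pi)^{d/2}}\int_{\mathbb{R}^d}e^{\,iy_1(t)|x|^2+iy_2(t)x\cdot\eta+iy_3(t)|\eta|^2}\hat u_0(\eta)\,d\eta,
\]
pull the factor $e^{iy_1(t)|x|^2}$ out of the integral, and collect all $\eta$-dependence into a single complex Gaussian. Writing $A=\tfrac14-iy_3(t)$ and $b=i\,(y_2(t)x+a_n)$, the remaining integral is $\int_{\mathbb{R}^d}e^{-A|\eta|^2+b\cdot\eta}\,d\eta$.

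The one point genuinely requiring justification is the evaluation of this complex Gaussian integral. Since Lemma \ref{FIOlem} takes $y_1(t)$ purely real, the system \eqref{system} forces $y_2(t)$ and $y_3(t)$ to be real as well and $a(t)=e^{-2\int_0^t y_1\kappa_1}$ to be real and positive; hence $\Re A=\tfrac14>0$ and $1-4iy_3(t)$ lies in the open right half-plane. Thus the integral converges absolutely and, by analytic continuation of the real-parameter Gaussian formula using the principal branch of the square root, equals $(\pi/A)^{d/2}e^{(b\cdot b)/(4A)}$. Here $b\cdot b=-|y_2(t)x+a_n|^2$, so $\tfrac{b\cdot b}{4A}=-\tfrac{|y_2(t)x+a_n|^2}{1-4iy_3(t)}$, while $(\pi/A)^{d/2}=\bigl(4\pi/(1-4iy_3(t))\bigr)^{d/2}$.

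Finally I would collect the prefactors: the factor $(a(t))^d(2\pi)^{-d/2}2^{-d/2}=(a(t))^d(4\pi)^{-d/2}$ out front combines with $\bigl(4\pi/(1-4iy_3(t))\bigr)^{d/2}$ to give exactly $\bigl(a^2(t)/(1-4iy_3(t))\bigr)^{d/2}$ (using $a(t)>0$), and reinstating $e^{iy_1(t)|x|^2}$ produces the claimed formula. The main obstacle is not conceptual but bookkeeping: keeping the branch of $(1-4iy_3(t))^{d/2}$ consistent—which is exactly what the reality facts and $\Re A>0$ guarantee—and tracking the prefactor arithmetic. A useful sanity check is that at $t=0$ the expression collapses to $e^{-|x+a_n|^2}=u_0$, and one may optionally verify \emph{a posteriori} that the result satisfies \eqref{nonauto} by differentiating and invoking \eqref{system}.
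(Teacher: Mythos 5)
Your proposal is correct and follows essentially the same route as the paper's proof: compute the Fourier transform $\hat u_0(\eta)=2^{-d/2}e^{ia_n\cdot\eta-|\eta|^2/4}$, insert it into the FIO of Lemma \ref{FIOlem}, and evaluate the resulting complex Gaussian integral in $\eta$ to collect the prefactor $\bigl(a^2(t)/(1-4iy_3(t))\bigr)^{d/2}$ and the exponent $iy_1(t)|x|^2-|y_2(t)x+a_n|^2/(1-4iy_3(t))$. The only difference is that you make explicit the convergence and branch-of-square-root justifications ($\Re A=1/4>0$, reality of the $y_i$ and positivity of $a(t)$) that the paper leaves implicit.
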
 
\begin{proof}
Recall that 
\begin{align*}
\int\limits_{\mathbb{R}^d}e^{-iy\cdot \eta}e^{-c|y|^2}\,dy=\left(\frac{\pi}{c}\right)^{\frac{d}{2}}e^{-\frac{|\eta|^2}{4c}}.  
\end{align*}
The Fourier transform of $\phi_n(x)$ is 
\begin{align*}
2^{-\frac{d}{2}}e^{ia_n\cdot\eta-\frac{|\eta|^2}{4}}.
\end{align*}
The FIO applied to $\phi_n(x)$ is then 
\begin{align*}
&\left(\frac{a^2(t)}{4\pi}\right)^{\frac{d}{2}}\int\limits_{\mathbb{R}^d}e^{iy_1(t)|x|^2+iy_2(t)x\cdot\eta+iy_3(t)|\eta|^2+ia_n\cdot\eta-\frac{|\eta|^2}{4}} \,d\eta\\& \nonumber
=\left(\frac{a^2(t)}{(1-4iy_3(t))}\right)^{\frac{d}{2}}e^{iy_1(t)|x|^2-\frac{|y_2(t)x+a_n|^2}{(1-4iy_3(t))}}
\end{align*} 
which gives the desired result. 
\end{proof} 
We can then construct a nearly explicit parametrix to all $u_0=f(x)e^{-|x|^2/2}$ with $f(x)$ an $L^2(\mathbb{R}^d)$ function which is interesting in its own right
\begin{proof}[Proof of Theorem \ref{parametrix}]
The approximation in \eqref{approxH} holds for any $f\in L^2(\mathbb{R}^d)$ because $h_k(x)$ is a well known (complete) orthonormal basis for $L^2(\mathbb{R}^d)$. More explicitly we see, \eqref{approxH} holds for a fixed $\eta$ and $f\in H^3([-M,M]^d)$ since we can select $N$ larger than $M$ based on the threshold $\eta$, using Proposition \ref{compact1}. We then use Corollary \ref{candidate} to give the two term bound on the error approximating $u_0$. The principle of superposition and the construction of the individual $\phi_n(t,x)$ in Lemma \ref{gaussianwavepackets} gives the desired result. The error over time can be bounded since the non-autonomous propagation operator is still an isometry on $L^2(\mathbb{R}^d)$ as a result of \cite{karel}. 
\end{proof}

\section{Proof of Theorems \ref{main} and \ref{main3} for Observability} 
From Theorem \ref{approx2} and Lemma \ref{gaussianwavepackets}, we are able to construct accurate solutions to \eqref{nonauto} using sophisticated Gaussian wavepackets. Gaussian wavepackets have localization features which will allow us to analyze the observable sets, $\omega$. Therefore, we start by proving some elementary bounds on the solutions to \eqref{system}, provided it is well-posed, which will then allow us to analyze the observability of the solution. A future goal is the development of complex valued initial data into wavepackets which would pose the added challenge that there could be a wavefront set, \cite{rodino}. 
\begin{lem}\label{Glemma}
Assume the Hamiltonian flow is non-zero for all $0<t\leq T_D$. Let $K_0,K$ be positive constants such that for all $t\in [0,T_D]$
\begin{align*}
|y_1(t)|\leq K_0 \quad \quad
0< \kappa_1(t)\leq K.
\end{align*}
Then the solutions $y_2(t)$ and $y_3(t)$ to \eqref{system} satisfy the following bounds
\begin{align}\label{Gbounds}
&e^{-4K_0Kt} \leq y_2(t)\leq e^{4K_0Kt}\quad \quad y_3(t)\leq 0.
\end{align}
\end{lem}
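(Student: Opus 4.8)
The plan is to exploit the fact that the second and third equations in \eqref{system} are both explicitly integrable once $y_1$ is regarded as a known, bounded function. By Lemma \ref{y1} the non-zero Hamiltonian flow guarantees that $y_1$ is well-posed on $[0,T_D]$, so the constants $K_0$ and $K$ in the statement genuinely exist, and the whole argument reduces to two elementary quadratures.

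First I would treat $y_2$. The equation $y_2'(t)=-4\kappa_1(t)y_1(t)y_2(t)$ with $y_2(0)=1$ is a homogeneous scalar linear ODE, hence solves explicitly as
\[
y_2(t)=\exp\left(-4\int_0^t\kappa_1(s)y_1(s)\,ds\right).
\]
In particular $y_2(t)>0$ for every $t\in[0,T_D]$, since the exponential is always positive. Bounding the exponent by the triangle inequality together with the hypotheses $|y_1(s)|\leq K_0$ and $0<\kappa_1(s)\leq K$ gives
\[
\left|4\int_0^t\kappa_1(s)y_1(s)\,ds\right|\leq 4K_0Kt,
\]
and exponentiating this two-sided bound yields $e^{-4K_0Kt}\leq y_2(t)\leq e^{4K_0Kt}$, which is the first assertion.

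For $y_3$ I would simply integrate the third equation directly: since $y_3(0)=0$,
\[
y_3(t)=-\int_0^t\kappa_1(s)\,y_2^2(s)\,ds.
\]
The integrand is nonnegative because $\kappa_1(s)>0$ by hypothesis and $y_2^2(s)>0$ by the positivity of $y_2$ established in the previous step. Hence $y_3(t)\leq 0$ for all $t\in[0,T_D]$, completing the proof.

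There is no genuine obstacle here: the result is elementary once one observes that the $y_2$-equation decouples into a scalar linear ODE (solvable by an integrating factor) while the $y_3$-equation is a pure quadrature. The only point requiring a little care is recording that the explicit exponential form forces $y_2>0$ throughout $[0,T_D]$, since that positivity is exactly what makes both the lower bound on $y_2$ and the sign of $y_3$ fall out cleanly.
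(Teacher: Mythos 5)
Your proof is correct and follows essentially the same route as the paper: the paper's proof is a single line invoking Gronwall's inequality applied to \eqref{system}, and your explicit integrating-factor solution of the $y_2$-equation is precisely the sharp form of that estimate for a scalar linear homogeneous ODE, with the $y_3$ bound then being a direct quadrature in both treatments. If anything, your write-up is more complete, since the explicit exponential form makes the positivity of $y_2$ --- and hence the lower bound $e^{-4K_0Kt}\leq y_2(t)$ and the sign of $y_3$ --- immediate, details the paper leaves unstated.
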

\begin{proof}
The proof follows immediately from Gronwall's inequality and \eqref{system}. 
\end{proof}
In practice we will compute the bounds explicitly in Section \ref{apps} when we know that the Hamiltonian flow is non-zero. Before proving a lower bound, we recall the following result of \cite{lowerbound}. The error function is defined by
\begin{align*}
\mathrm{erfc}(x)=\frac{2}{\sqrt{\pi}}\int\limits_x^{\infty} e^{-x^2}\,dx.
\end{align*}
We have the following single term lower bound for the complementary error function
\begin{align}\label{lowerbd}
\mathrm{erfc}(x)\geq \sqrt{\frac{2e}{\pi}}\sqrt{\frac{\beta-1}{\beta}}e^{-\beta x^2} \quad x\geq 0,\,\, \beta>1.
\end{align}
For the rest of this paper we make the definitions  
\begin{align}\label{adef}
A=A(t)=\frac{2|y_2(t)|^2}{(1+16y_3^2(t))}  \quad \gamma(t)=\left(\frac{a^2(t)}{(1-4iy_3(t))}\right)^{\frac{d}{2}}&\\ \nonumber
\end{align}
which will control the spread of the wave packets and ultimately the constant $C_T$. 
The above results will be useful in proving the lower bound in the next lemma.
\begin{lem}\label{lowerinner}
Let $\phi_n(t,x)$ be the same as in Lemma \ref{gaussianwavepackets} for arbitrary $a_n\in \mathbb{R}^+$. Let $c_n$ be a finite collection of  real numbers indexed by $n\in \mathbb{R}^d$, $|n|\leq N$, then have the following lower bound for all $0\leq t\leq T$
\begin{align*}
\epsilon(t,R)\sum\limits_{|n|\leq N}|c_n|\leq  \sum\limits_{|n|\leq N}\left(\int\limits_{[-R,R]^d}|c_n\phi_n(t,x)|^2\,dx\right)^{\frac{1}{2}}
\end{align*}
where $\epsilon(t,R)$ is a positive constant. 
\end{lem}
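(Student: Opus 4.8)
The plan is to reduce the claimed inequality to a uniform-in-$n$ lower bound on each individual box integral. Indeed, once I produce a positive constant $\epsilon(t,R)$ with $\epsilon(t,R)\le\left(\int_{[-R,R]^d}|\phi_n(t,x)|^2\,dx\right)^{1/2}$ for every $n$ with $|n|\le N$, I can multiply by $|c_n|\ge 0$ and sum, obtaining
\begin{align*}
\epsilon(t,R)\sum_{|n|\le N}|c_n|\le\sum_{|n|\le N}|c_n|\left(\int_{[-R,R]^d}|\phi_n(t,x)|^2\,dx\right)^{\frac{1}{2}}=\sum_{|n|\le N}\left(\int_{[-R,R]^d}|c_n\phi_n(t,x)|^2\,dx\right)^{\frac{1}{2}},
\end{align*}
which is exactly the asserted bound. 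So the whole content is the single-packet estimate. First I would compute $|\phi_n(t,x)|^2$ explicitly from Lemma \ref{gaussianwavepackets}. The factor $iy_1(t)|x|^2$ is purely imaginary since $y_1$ is real, and the real part of $-|y_2x+a_n|^2/(1-4iy_3)$ equals $-|y_2x+a_n|^2/(1+16y_3^2)$. Using that $y_2(t)>0$ (a scalar) so that $|y_2x+a_n|^2=y_2^2|x+a_n/y_2|^2$, and the definition of $A(t)$ in \eqref{adef}, this gives the clean Gaussian profile
\begin{align*}
|\phi_n(t,x)|^2=|\gamma(t)|^2\,e^{-A(t)\left|x+a_n/y_2(t)\right|^2}.
\end{align*}

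\textbf{The lower bound on the box.} Since the integrand is a strictly positive Gaussian, I would bound it below on $[-R,R]^d$ by its minimum value over the box. By the triangle inequality, for $x\in[-R,R]^d$ one has $|x+a_n/y_2(t)|\le\sqrt{d}\,R+|a_n|/y_2(t)$, whence $|\phi_n(t,x)|^2\ge|\gamma(t)|^2\exp\!\big(-A(t)(\sqrt{d}R+|a_n|/y_2(t))^2\big)$ uniformly on the box. Integrating over the cube of volume $(2R)^d$ and taking square roots yields
\begin{align*}
\left(\int_{[-R,R]^d}|\phi_n(t,x)|^2\,dx\right)^{\frac{1}{2}}\ge|\gamma(t)|(2R)^{d/2}\exp\!\left(-\tfrac{A(t)}{2}\big(\sqrt{d}R+|a_n|/y_2(t)\big)^2\right).
\end{align*}
Because the index set $\{n:|n|\le N\}$ is finite, I then set $a_\ast=\max_{|n|\le N}|a_n|$ and define
\begin{align*}
\epsilon(t,R)=|\gamma(t)|(2R)^{d/2}\exp\!\left(-\tfrac{A(t)}{2}\big(\sqrt{d}R+a_\ast/y_2(t)\big)^2\right),
\end{align*}
which is the desired uniform lower bound. (Alternatively one can evaluate the one-dimensional integrals exactly in terms of $\mathrm{erfc}$ and invoke the lower bound \eqref{lowerbd} for a sharper constant, but the crude minimum-value estimate already suffices for positivity.)

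\textbf{Positivity and the main obstacle.} It remains to check $\epsilon(t,R)>0$. The amplitude $a(t)$ is a positive exponential, so $|\gamma(t)|>0$; Lemma \ref{Glemma} gives $e^{-4K_0Kt}\le y_2(t)\le e^{4K_0Kt}$, so $y_2(t)>0$ and $A(t)=2|y_2(t)|^2/(1+16y_3^2(t))>0$ is finite. Hence every factor is strictly positive and $\epsilon(t,R)>0$ for each fixed $t\in[0,T]$ and $R>0$. The only genuinely delicate point — and the one that drives the hypotheses of Theorems \ref{main} and \ref{main3} — is the dependence of $\epsilon(t,R)$ on $a_\ast=\max_{|n|\le N}|a_n|$: the bound degrades like $\exp(-\tfrac{A(t)}{2}(a_\ast/y_2(t))^2)$, so the packets whose centers $-a_n/y_2(t)$ are pushed far from the box contribute an exponentially small lower bound. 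This is precisely why the downstream observability estimates must restrict the spread $\max_{|n|,|m|\le N}|\tilde a_n-\tilde a_m|$ and impose smallness of $\varepsilon$ with $\tilde a_n=\varepsilon n$; here I simply record the clean positive constant and defer that quantitative control to the observability proofs.
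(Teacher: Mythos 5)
Your reduction to a single-packet bound and your computation of $|\phi_n(t,x)|^2=|\gamma(t)|^2e^{-A(t)|x+a_n/y_2(t)|^2}$ agree with the paper, but you have proved the wrong inequality: the paper's own proof estimates $\int_{([-R,R]^d)^c}|\phi_n(t,x)|^2\,dx$, i.e.\ the integral over the \emph{complement} of the box, not over the box itself. The "$[-R,R]^d$" in the statement is a typo, and the exterior region is what is actually required downstream: the observation set is $\omega=\mathbb{R}^d\setminus\overline{\Omega}$ with $\Omega\subset B_{R_0}\subset[-R,R]^d$, and in the proof of Theorem \ref{main} the lemma is chained with Lemma \ref{chainlemma} through the inclusion $([-R,R]^d)^c\subset B_{R_0}^c$, which gives $\|c_n\phi_n\|_{L^2(B_{R_0}^c)}\geq\|c_n\phi_n\|_{L^2(([-R,R]^d)^c)}\geq\epsilon(t,R)|c_n|$. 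Your interior bound cannot be inserted into that chain: since $B_{R_0}\subset[-R,R]^d$, a lower bound for the integral over the box says nothing about the integral over $B_{R_0}^c$, so the observability argument would break exactly at this step.

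The second problem is your constant. The lemma asserts a constant $\epsilon(t,R)$ depending only on $t$ and $R$, valid for \emph{arbitrary} $a_n$, and the explicit $a_n$-free value \eqref{defepst} is what enters $C_T$ in \eqref{CT}. Your constant decays like $\exp\!\left(-\tfrac{A}{2}(\sqrt{d}R+a_*/y_2)^2\right)$ with $a_*=\max_n|a_n|$, and for the interior version this is unavoidable, not an artifact of a crude estimate: taking a single Gaussian with $a_1\to\infty$ sends the box integral to $0$, so no $a_n$-independent positive constant exists for the statement you proved — read literally with $\epsilon=\epsilon(t,R)$, the interior version is false. The exterior domain is precisely what restores uniformity, via the paper's dichotomy in each coordinate: writing $B_{n_i}=a_{n_i}/y_2(t)$, either $|B_{n_i}|<R$ and the tail beyond the box is bounded below via \eqref{lowerbd} by $\sqrt{e/\pi}\,e^{-2A(R-|B_{n_i}|)^2}\geq\sqrt{e/\pi}\,e^{-2AR^2}$, or $|B_{n_i}|\geq R$ and at least half the Gaussian mass lies outside, so $2-\mathrm{erfc}(\sqrt{A}(|B_{n_i}|-R))>1$; both cases are independent of the center. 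Relatedly, your closing remark misattributes the role of the spread of the centers: in the paper the restrictions on the $\tilde a_n$ come from Lemma \ref{ass1} (condition \eqref{req}) and from \eqref{R1} in Theorem \ref{main3}, not from Lemma \ref{lowerinner}, whose constant carries no $a_n$-dependence at all.
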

\begin{proof}
Define 
\begin{align*}
 \quad\quad B_n= \frac{a_n}{y_2(t)}
 \end{align*}
then we have that
\begin{align*}
&\int\limits_{([-R,R]^d)^c}|\phi_n(t,x)|^2\,dx
\geq |\gamma(t)|^2 \int\limits_{([-R,R]^d)^c}e^{-\frac{2|y_2(t)x+a_n|^2}{(1+16y_3^2(t))}}\,dx\\&=|\gamma(t)|^2 \left(\frac{\pi}{4A}\right)^{\frac{d}{2}}\prod\limits_{n_i}(h(|B_{n_i}|,A,R))
\end{align*}
where
\begin{align*}
&h(|B_{n_i}|,A,R)\\&=
\begin{cases}
\mathrm{erfc}\left( \sqrt{A}\left(R+\left|B_{n_i}\right|\right)\right)+ \mathrm{erfc}\left(\sqrt{A}\left(R-\left|B_{n_i}\right|\right)\right) & R>\left|B_{n_i}\right| \\ 
\mathrm{erfc}\left(\sqrt{A}\left(R+\left|B_{n_i}\right|\right)\right)+ \left(2-\mathrm{erfc}\left(\sqrt{A}\left(\left|B_{n_i}\right|-R\right)\right)\right)  & R\leq \left|B_{n_i}\right|. 
\end{cases}
\end{align*} 
If $R>\left|B_{n_i}\right|=m\geq 0$, then from equation \eqref{lowerbd} we have with $\beta=2$ 
\begin{align*}
&\mathrm{erfc}\left(\sqrt{A}(R-m)\right)\geq \sqrt{\frac{e}{\pi}}e^{-2A(R-m)^2} \geq \sqrt{\frac{e}{\pi}}e^{-2AR^2} .
\end{align*} 
If $\left|B_{n_i}\right|\geq R\geq 0$ then 
\begin{align*}
&\left(2-\mathrm{erfc}(\sqrt{A}(m-R))\right)>1.
\end{align*} 
As a result we obtain 
\begin{align}
&\epsilon(t,R)=\left(\frac{\pi}{4A}\right)^{\frac{d}{4}}|\gamma(t)|\min\left\{1, \sqrt[4]{\frac{e}{\pi}}e^{-AR^2}\right\}.
\end{align}
Noticing that $|\gamma(t)|=\left(\frac{A}{2}\right)^{\frac{d}{4}}$ we can simplify further
\begin{align}\label{defepst}
&\epsilon(t,R)=\min\left\{\frac{\pi^{\frac{d-1}{4}}e^{\frac{1}{4}}}{2^{\frac{3d}{4}}}e^{-AR^2},\left(\frac{\pi}{8}\right)^{\frac{d}{4}}\right\}.
\end{align}
By combining Lemmata \ref{y1} and \ref{Glemma}, $A$ is bounded above and below for all $0\leq t\leq T$ depending on the $L^{\infty}([0,T])$ norms of $\kappa_1(t),\kappa_2(t)$, implying $\epsilon(t,R)$ is positive. An explicit lower bound for \eqref{defepst} is computed for various examples in Section \ref{apps}. 
\end{proof}
Recall the Diaz-Metcalf inequality
\begin{lem}\label{reversecomplex}[Diaz-Metcalf \cite{diazmetcalf}]
Let $w$ be a unit vector in the inner product space $(H;\langle \cdot, \cdot\rangle)$ over the real or complex number field. Suppose that the vectors $v_n\in H\setminus{0}$, $i\in\{1, . . ,N\}$ satisfy
\begin{align}
0\leq r\leq \frac{\mathrm{Re}\langle v_n,w\rangle}{\|v_n\|}
\end{align}
then  
\begin{align}
r\sum\limits_{i=1}^N\|v_n\|\leq \|\sum\limits_{i=1}^Nv_n\|. 
\end{align}
\end{lem}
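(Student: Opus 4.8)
The plan is to derive the estimate from a short chain of inequalities, combining the hypothesis term-by-term with a single application of Cauchy--Schwarz against the unit vector $w$. The key observation is that, because each $\|v_n\|>0$, the lower bound $r\leq \mathrm{Re}\langle v_n,w\rangle/\|v_n\|$ is equivalent to the cleared form
\begin{align*}
r\|v_n\|\leq \mathrm{Re}\langle v_n,w\rangle
\end{align*}
holding separately for every index $n\in\{1,\dots,N\}$. This is the only place the hypothesis is used, and it is used in a form in which the denominators have disappeared.

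First I would sum this inequality over $n$. By linearity of the inner product in its first slot and of the real-part functional, the right-hand side collapses into the real part of a single pairing, namely $\sum_n \mathrm{Re}\langle v_n,w\rangle=\mathrm{Re}\langle \sum_n v_n,w\rangle$, so that
\begin{align*}
r\sum_{n=1}^N\|v_n\|\leq \mathrm{Re}\Big\langle \sum_{n=1}^N v_n,\,w\Big\rangle.
\end{align*}
Next I would estimate the real part by the modulus via $\mathrm{Re}(z)\leq |z|$ and apply Cauchy--Schwarz to the pairing of $\sum_n v_n$ with $w$; since $w$ is a unit vector, $\|w\|=1$, which yields
\begin{align*}
\mathrm{Re}\Big\langle \sum_{n=1}^N v_n,\,w\Big\rangle\leq \Big|\big\langle \sum_{n=1}^N v_n,\,w\big\rangle\Big|\leq \Big\|\sum_{n=1}^N v_n\Big\|\,\|w\|=\Big\|\sum_{n=1}^N v_n\Big\|.
\end{align*}
Chaining the two displays gives exactly the claimed bound $r\sum_n\|v_n\|\leq \|\sum_n v_n\|$.

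There is essentially no genuine obstacle here; this is a classical reverse Cauchy--Schwarz argument. The only point requiring mild care is the consistent handling of the real part, which is precisely what makes the proof valid uniformly over both the real and the complex scalar fields: one must retain $\mathrm{Re}$ through the summation step and only relax it to the modulus at the Cauchy--Schwarz stage, since in the complex case the quantity $\langle v_n,w\rangle$ need not be real and the hypothesis controls only its real part.
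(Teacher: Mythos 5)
Your proof is correct: clearing the denominators, summing over $n$, using linearity of $\mathrm{Re}\langle\cdot,w\rangle$, and finishing with $\mathrm{Re}(z)\leq|z|$ plus Cauchy--Schwarz against the unit vector $w$ is exactly the classical Diaz--Metcalf argument. The paper itself states this lemma without proof, citing \cite{diazmetcalf}, and your argument coincides with the standard proof in that reference, so nothing further is needed.
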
 
Using the above lemma we have can find a lower bound on the $L^2((0,T)\times \omega)$ norm of $u(t,x)$ in terms of the Gaussian wave packets which suits our needs. The $L^2((0,T)\times \omega)$ norm is difficult to work with directly when the initial data is propagated into complex wavepackets, because their inner products take the form of Frensel integrals. This is the reason for the intermediate Lemmata. We make the following choice of $a_n$. 
\begin{lem}\label{ass1}
There is a choice of $\varepsilon$ in terms of a fixed natural number $N$ such that for $a_n=\varepsilon n$ with $n=(n_1, . . .,n_d)$, $|n|\leq N$ $n_i\in \{1, 2, . . .N\}$ and a $\psi_1(x)$ independent of $n$ such that 
\begin{align*}
\mathrm{Re}\int\limits_{|x|>R_0}(\gamma(t))^{-1}\phi_n(t,x)\overline{\psi_1(x)}\,dx\geq \tilde{\delta}(t,R_0)>0 
\end{align*}
for all of these $n$, where $B_{R_0}$ is the ball of radius $R_0$ with $||\psi_1||_{L^2(B_{R_0}^c)}=1$.
\end{lem}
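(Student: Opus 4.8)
The plan is to construct a single unit vector $\psi_1$ (allowed to depend on $t$, but \emph{not} on $n$, which is all that the Diaz--Metcalf inequality in Lemma \ref{reversecomplex} requires, since it will be applied at each fixed $t$) that correlates positively with every normalized wavepacket $(\gamma(t))^{-1}\phi_n(t,\cdot)$ on $B_{R_0}^c$. First I would split modulus and phase. Using $\frac{1}{1-4iy_3(t)}=\frac{1+4iy_3(t)}{1+16y_3^2(t)}$ and expanding $|y_2(t)x+a_n|^2=y_2^2(t)|x|^2+2y_2(t)\,x\cdot a_n+|a_n|^2$ gives
\[
(\gamma(t))^{-1}\phi_n(t,x)=e^{-\frac{|y_2(t)x+a_n|^2}{1+16y_3^2(t)}}\,e^{i(\Phi_0(t,x)+\Psi_n(t,x))},
\]
where $\Phi_0(t,x)=\left(y_1(t)-\frac{4y_3(t)y_2^2(t)}{1+16y_3^2(t)}\right)|x|^2$ is \emph{independent of} $n$, while $\Psi_n(t,x)=-\frac{8y_3(t)y_2(t)}{1+16y_3^2(t)}\,x\cdot a_n-\frac{4y_3(t)|a_n|^2}{1+16y_3^2(t)}$ carries all the $n$-dependence.

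Second, I would choose $\psi_1(x)=e^{i\Phi_0(t,x)}g(x)$, with $g$ a fixed real, nonnegative function supported in a bounded annulus $\{R_0<|x|<R_1\}\subset B_{R_0}^c$ and normalized so that $\|\psi_1\|_{L^2(B_{R_0}^c)}=\|g\|_{L^2}=1$. This choice cancels exactly the common oscillatory factor $e^{i\Phi_0}$, so that
\[
\mathrm{Re}\int_{|x|>R_0}(\gamma(t))^{-1}\phi_n(t,x)\overline{\psi_1(x)}\,dx=\int_{R_0<|x|<R_1}g(x)\,e^{-\frac{|y_2(t)x+a_n|^2}{1+16y_3^2(t)}}\cos\big(\Psi_n(t,x)\big)\,dx.
\]

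Third, I would exploit the freedom in $\varepsilon$. Since $a_n=\varepsilon n$ with $|n|\le N$ we have $|a_n|\le C\varepsilon N$, and Lemmata \ref{y1} and \ref{Glemma} bound $y_2(t),y_3(t)$ uniformly on $[0,T]$; hence on the annulus $|\Psi_n(t,x)|\le C(t)(R_1+1)\varepsilon N$. Taking $\varepsilon$ small enough (in terms of the fixed $N$, of $R_1$, and of the $t$-bounds) that this quantity is at most $\pi/3$ forces $\cos(\Psi_n(t,x))\ge \tfrac12$ uniformly in $n$. On the same bounded annulus the Gaussian weight is bounded below by $e^{-(|y_2(t)|R_1+C\varepsilon N)^2/(1+16y_3^2(t))}=:\delta_0(t)>0$, again uniformly in $n$, so that $\mathrm{Re}\int\ge \tfrac12\,\delta_0(t)\int g\,dx=:\tilde\delta(t,R_0)$, which is strictly positive because $g\ge 0$ is not identically zero and $\delta_0(t)>0$.

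The main obstacle is the uniformity in $n$: both the residual phase $\Psi_n$ and the $n$-dependent shift of the Gaussian center must be prevented from destroying positivity of the real part for \emph{any} admissible $n$ simultaneously. These are tamed together precisely by taking $\varepsilon$ small and supporting $g$ in a \emph{bounded} annulus, so that $x$ ranges over a compact set on which the oscillation is controlled; this is exactly the source of the ``$\varepsilon$ sufficiently small'' constraint that reappears in Theorem \ref{main}. The remaining point to verify is that $\delta_0(t)$, and hence $\tilde\delta(t,R_0)$, stays positive for all $t\in[0,T]$, which follows from the $L^\infty([0,T])$ bounds on $y_2,y_3$ guaranteed by the non-zero flow hypothesis through Lemma \ref{Glemma}.
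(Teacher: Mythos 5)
Your proof is correct, and it shares the paper's central idea: pick a test function whose phase is exactly the $n$-independent quadratic phase, so that after conjugation the real part of the inner product reduces to a positive integral plus an $n$-dependent residue that is killed by taking $\varepsilon$ small. Indeed, the paper's choice $\psi_0(x)=e^{-\frac{A}{2}|x|^2-2iy_3(t)A|x|^2+iy_1(t)|x|^2}/|x|^{d-1}$ carries precisely the phase $e^{i\Phi_0(t,x)}$ in your notation, since $-2y_3(t)A=-4y_3(t)y_2^2(t)(1+16y_3^2(t))^{-1}$. Where you genuinely differ is in the amplitude profile and the error control: you put a real bump $g$ on a bounded annulus and control everything pointwise (the phase bound $\cos(\Psi_n)\ge\tfrac12$ plus a trivial Gaussian lower bound on a compact set), whereas the paper keeps an explicit radial profile on all of $B_{R_0}^c$, computes the $\varepsilon=0$ term exactly as $C(d)\int_{r>R_0}e^{-Ar^2}\,dr$ (manifestly independent of $n$), and bounds the first $\varepsilon$-derivative by explicit Gaussian/erfc integrals, see \eqref{normest}. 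Your route is shorter, softer, and avoids special-function estimates; the paper's route buys the explicit quantitative admissibility condition \eqref{req} on $\varepsilon$ and the explicit lower bound \eqref{deltatilde} for $\tilde{\delta}(t,R_0)$, both of which are quoted in the statement of Theorem \ref{main}, feed into $\delta(t,R_0)$ in \eqref{delta}, hence into the explicit constant $C_T$ in \eqref{CT}, and are evaluated for the concrete oscillators in Section \ref{apps}. If your version were adopted, those downstream statements would remain qualitatively true, but \eqref{req}, \eqref{deltatilde}, \eqref{delta} and \eqref{CT} would need to be re-derived in terms of your $g$, $R_1$ and $\delta_0(t)$, losing the computability that the examples rely on. Two minor points: your annulus bound on the residual phase should read $|\Psi_n(t,x)|\le C(t)\left(R_1\varepsilon N+\varepsilon^2N^2\right)$, i.e.\ of order $(R_1+N)\varepsilon N$ rather than $(R_1+1)\varepsilon N$, which is harmless since $\varepsilon$ is chosen small in terms of the fixed $N$; and your observation that $t$-dependence of $\psi_1$ is permissible is consistent with the paper, whose $\psi_0$ also depends on $t$ through $A$, $y_1$, $y_3$, the Diaz--Metcalf inequality being applied at each fixed time.
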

\begin{proof}
Select 
\begin{align*}
\psi_0(x)=\frac{e^{-\frac{A}{2}|x|^2-2iy_3(t)A|x|^2+iy_1(t)|x|^2}}{|x|^{d-1}} 
\end{align*}
then we have that 
\begin{align*}
\int\limits_{|x|>R_0}|\psi_0(x)|^2\,dx=\int\limits_{|x|>R_0}\frac{e^{-A|x|^2}}{|x|^{2(d-1)}}\,dx=(C(R_0,A))^2
\end{align*}
where $C(R_0,A)=\mathcal{O}(e^{-R_0^2A/2})$. We set $\psi_0=(C(R_0,A))\psi_1$. 
It follows that 
\begin{align*}
&\int\limits_{|x|>R_0}\phi_n(t,x)\overline{\psi_0(x)}\,dx=\int\limits_{|x|>R_0}\frac{e^{-A|x|^2+\tilde{A}y_2(t)a_n\cdot x-\frac{\tilde{A}}{2}|a_n|^2}}{|x|^{d-1}}\,dx\\&=
\int\limits_{\theta\in \mathbb{S}^{d-1}} \int\limits_{r>R_0}e^{-Ar^2+\tilde{A}y_2(t)a_n\cdot b(\theta)r-\frac{\tilde{A}}{2}a_n^2}\,dr\,d\theta.
\end{align*}
Here $\tilde{A}=2(1-4iy_3(t))^{-1}$ and $b(\theta)$ is a vector depending on $\theta\in \mathbb{S}^{d-1}$ with norm 1. It follows that if $a_n=\varepsilon n$, where we have control over the parameter $\varepsilon$ which is small, then we can Taylor expand this integral in terms of $\varepsilon$. 
Set 
\begin{align*}
\int\limits_{|x|>R_0}\frac{e^{-A|x|^2+\tilde{A}y_2(t)a_n\cdot x-\frac{\tilde{A}}{2}|a_n|^2}}{|x|^{d-1}}\,dx=f(\varepsilon,A,R_0,n).
\end{align*}
Let $C(d)$ be the volume of the unit ball in $d$ dimensions. The leading order term is
\begin{align*}
f(0,A,R_0,n)= C(d)\int\limits_{r>R_0}e^{-Ar^2}\,dr,
\end{align*}
which is independent of $n$. The error in this approximation is bounded by 
\begin{align*}
&\left|\sup\limits_{\varepsilon\in(0,1)}\frac{d}{d\varepsilon}f(\varepsilon,A,R_0,n)\right|\leq \\&\sup\limits_{\varepsilon\in(0,1)}\int\limits_{r>R_0}C(d)|\tilde{A}n|(1+r|y_2(t)|)e^{-Ar^2+A_1|n|r}\,dr=C(d)nC_2(A_1,R_0,n).
\end{align*}
with $A_1=2|y_2(t)(1+16y_3^2(t))^{-1}|$. For positive constants $a,b,c,m$ we have that 
\begin{align}
\int\limits_m^{\infty}(a+bx)e^{cx-x^2}\,dx=
\frac{e^{\frac{c^2}{4}}}{4}\left(\sqrt{\pi}(2a+bc)\left(\mathrm{erf}\left(\frac{1}{2}(c-2m)\right)+1\right)+2be^{-\frac{1}{4}(c-2m)^2}\right).
\end{align}
Using the substitution 
\begin{align*}
a=1\quad b=\frac{|y_2(t)|}{\sqrt{A}} \quad c=\frac{A_1n}{\sqrt{A}} \quad \sqrt{A}R_0=m,
\end{align*}
we find
\begin{align*}
&\sup\limits_{n\leq N}C_2(A_1,R_0,n)\leq \\&
 \frac{C(d)|\tilde{A}|}{2\sqrt{A}}\left(\frac{|y_2(t)|}{\sqrt{A}}e^{-AR_0^2+A_1NR_0}+2\sqrt{\pi}e^{\frac{A_1^2N^2}{A}}\left(1+\frac{A_1N}{A}|y_2(t)|\right)\right).
\end{align*}
Plugging in the values of $A$, $\tilde{A}$ and $A_1$ in terms of the Riccati equations we obtain
\begin{align}\label{normest}
&C(d)^{-1}\sup\limits_{n\leq N}C_2(A_1,R_0,n)\leq  \\& \nonumber
\frac{\sqrt{1+16y_3(t)^2}}{2|y_2(t)|}e^{\frac{2|y_2(t)|^2R_0^2+2|y_2(t)|NR_0}{1+16y_3^2(t)}}+\sqrt{2\pi}\left(\frac{N+1}{|y_2(t)|} \right)e^{\frac{2N^2}{1+16y_3^2(t)}}.
\end{align}
 This gives   
\begin{align*}
\frac{1}{C(R_0,A)}\sup\limits_{n\leq N}\Re \int\limits_{|x|>R_0}\phi_n(t,x)\overline{\psi_0(x)}\,dx=C(d)\frac{\int\limits_{r>R_0}e^{-Ar^2}\,dr}{C(R_0,A)}+\mathcal{O}(\varepsilon)=\tilde{\delta}(t,R_0)>0,
\end{align*}
by our choice of $\varepsilon$ sufficiently small. Notice that 
\begin{align*}
C(d)\frac{\int\limits_{r>R_0}e^{-Ar^2}\,dr}{C(R_0,A,n)}\geq R_0^{\frac{d-1}{2}}\left(\frac{\mathrm{erfc}(\sqrt{A}R_0)}{2\sqrt{A}}\right)^{\frac{1}{2}}
\end{align*} 
for $R_0>1$. We need to show that the $\mathcal{O}(\varepsilon)$ terms are not so large so that $\tilde{\delta}(t,R_0)$ is positive. Using Taylor's theorem we see that this can be accomplished by using \eqref{normest} to enforce the condition
\begin{align}\label{req}
&\frac{\sqrt{1+16y_3(t)^2}}{2|y_2(t)|}e^{\frac{2|y_2(t)|^2R_0^2+2|y_2(t)|NR_0}{1+16y_3^2(t)}}+\sqrt{2\pi}\left(\frac{N+1}{|y_2(t)|} \right)e^{\frac{2N^2}{1+16y_3^2(t)}}
\leq \\& \nonumber \frac{1}{2}\int\limits_{r>R_0}e^{-Ar^2}\,dr.
\end{align}
 We have for $\varepsilon$ sufficiently small that 
 \begin{align}\label{deltatilde}
 2\tilde{\delta}(t,R_0)>R_0^{\frac{d-1}{2}}\left(\frac{\mathrm{erfc}(\sqrt{A}R_0)}{2\sqrt{A}}\right)^{\frac{1}{2}}>R_0^{\frac{d-1}{2}}\left(\frac{e}{4A\pi}\right)^{\frac{1}{4}}e^{-AR_0^2}.
 \end{align} 
 This finishes the proof. In practice the constants in time in \eqref{deltatilde} and \eqref{req} are computable and this is done in Section \ref{apps} for various examples. 
\end{proof}
 Using these lemmata we can show that in essence it is possible to ignore some of the problems caused by the off diagonal terms for certain $f(x)$. Let $R_0$ and $R$ be large enough such that $\Omega\subset B_{R_0}\subset [-R,R]^d$. 
\begin{lem}\label{chainlemma}
Let
\begin{align}\label{delta}
\delta(t,R_0)=\frac{e^{\frac{1}{4}}A^{\frac{d-1}{4}}R_0^{\frac{d-1}{2}}}{2^{\frac{d}{4}+1}(4\pi)^{\frac{1}{4}}}e^{-AR_0^2}
\end{align}
Assume that $a_n$ satisfies Lemma \ref{ass1}, and $c_n$ are finite real positive numbers then we have the following inequalities
\begin{align}\label{chain}
&\left(\frac{\pi}{2}\right)^{-\frac{d}{4}}\int\limits_0^T\delta(t,R_0)\sum\limits_{|n|\leq N}||c_n\phi_n(t,\cdot)||_{L^2(B_{R_0}^c)}\,dt\leq \\&\int\limits_0^T\left(\int\limits_{\omega}\left|\sum\limits_{|n|\leq N}c_n\phi_n(t,x)\right|^2\right)^{\frac{1}{2}}\,dt\leq \nonumber T^{\frac{1}{2}}\left(\int\limits_0^T\int\limits_{\omega}\left|\sum\limits_{|n|\leq N}c_n\phi_n(t,x)\right|^2\,dx\,dt\right)^{\frac{1}{2}}.
\end{align} 
\end{lem}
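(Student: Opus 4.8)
The right-hand inequality in \eqref{chain} is immediate from the Cauchy--Schwarz inequality in the time variable: writing $g(t)=\left(\int_{\omega}\left|\sum_{|n|\le N}c_n\phi_n(t,x)\right|^2\,dx\right)^{1/2}$, one has $\int_0^T g(t)\,dt\le T^{1/2}\left(\int_0^T g(t)^2\,dt\right)^{1/2}$, and $g(t)^2$ is precisely the inner space-time integrand. The substance of the lemma is therefore the left-hand inequality, which I would prove pointwise in $t$ and then integrate over $[0,T]$.

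The plan for the left inequality is to invoke the Diaz--Metcalf reverse triangle inequality (Lemma \ref{reversecomplex}) in the Hilbert space $H=L^2(B_{R_0}^c)$. Since $\Omega\subset B_{R_0}$ forces the exterior $\{|x|>R_0\}$ into $\omega$, we have $\int_{B_{R_0}^c}|\cdot|^2\le\int_{\omega}|\cdot|^2$, so it suffices to bound $\left\|\sum_{|n|\le N}c_n\phi_n(t,\cdot)\right\|_{L^2(B_{R_0}^c)}$ from below. First I would strip the common amplitude by setting $\tilde\phi_n=\gamma(t)^{-1}\phi_n$, so that $|\tilde\phi_n(t,x)|^2=e^{-A|x+B_n|^2}$ with $B_n=a_n/y_2(t)$ and $A$ as in \eqref{adef}; the scalar $|\gamma(t)|^{-1}$ then factors out of every term and cancels at the end. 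With $w=\psi_1$ the unit vector from Lemma \ref{ass1} and $v_n=c_n\tilde\phi_n$, the positivity of $c_n$ pulls $c_n$ out of the required ratio, giving $\frac{\Re\langle v_n,w\rangle}{\|v_n\|}=\frac{\Re\langle\tilde\phi_n,\psi_1\rangle}{\|\tilde\phi_n\|}$.

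The two ingredients feeding Diaz--Metcalf are a lower bound $\Re\langle\tilde\phi_n,\psi_1\rangle\ge\tilde\delta(t,R_0)$ supplied directly by Lemma \ref{ass1}, and an upper bound $\|\tilde\phi_n\|_{L^2(B_{R_0}^c)}\le(\pi/A)^{d/4}$ obtained by extending the Gaussian integral $\int_{B_{R_0}^c}e^{-A|x+B_n|^2}\,dx$ to all of $\mathbb{R}^d$. Together these make $r=\tilde\delta(t,R_0)(A/\pi)^{d/4}$ an admissible constant, so Lemma \ref{reversecomplex} yields $r\sum_n\|c_n\tilde\phi_n\|\le\|\sum_n c_n\tilde\phi_n\|$; reinstating $\gamma(t)$ (the $|\gamma(t)|^{-1}$ factors cancel on both sides) converts this to the same statement for the $\phi_n$.

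The final step, where I expect the only real friction, is the bookkeeping that matches $r$ against the explicit constant in \eqref{delta}: inserting the lower bound \eqref{deltatilde} for $\tilde\delta(t,R_0)$ and collecting the powers of $A$, $\pi$, $e$ and $2$ must reproduce $(\pi/2)^{-d/4}\delta(t,R_0)$, i.e. one verifies $\tilde\delta(t,R_0)(A/\pi)^{d/4}\ge(\pi/2)^{-d/4}\delta(t,R_0)$. This is purely algebraic but needs care, since the exponent $\tfrac{d-1}{4}$ on $A$, the factor $R_0^{(d-1)/2}$, and the $e^{-AR_0^2}$ must all line up exactly. Granting this, integrating the pointwise inequality in $t$ over $[0,T]$ delivers the left-hand side of \eqref{chain}. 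The conceptual point worth stressing is that the ordinary triangle inequality runs the wrong way and the cross terms in $\left|\sum c_n\phi_n\right|^2$ may be negative; Diaz--Metcalf is exactly what rescues a lower bound, by testing every wavepacket against the single common direction $\psi_1$ along which Lemma \ref{ass1} guarantees a uniformly positive projection.
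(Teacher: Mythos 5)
Your proposal is correct and follows essentially the same route as the paper's (much terser) proof: factor out the common amplitude $\gamma(t)$, apply the Diaz--Metcalf inequality in $L^2(B_{R_0}^c)$ with $w=\psi_1$ and the lower bound $\tilde{\delta}(t,R_0)$ from Lemma \ref{ass1}, use $\Omega\subset B_{R_0}$ to pass to $\omega$, and finish the right-hand inequality by Cauchy--Schwarz in time. Your constant bookkeeping also checks out, since $\tilde{\delta}(A/\pi)^{d/4}\geq(\pi/2)^{-d/4}\delta(t,R_0)$ follows exactly from \eqref{deltatilde}, which is precisely the paper's identification $\tilde{\delta}\,|\gamma(t)|=\delta$ combined with $|\gamma(t)|=(A/2)^{d/4}$.
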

\begin{proof}
For the first inequality, we note that $c_n$ is real valued and $\gamma(t)$ is the same factor for all the propagated wavepackets then applying Lemma \ref{ass1} gives the first inequality, provided $\Omega\subset B_{R_0}$ and $v_n=c_n\phi_n(t,x)$ and $\tilde{\delta}|\gamma(t)|=\delta$. The last inequality in the Lemma follows by Cauchy Schwartz. 
\end{proof} 

The $\ell^1$ norm of the $c_n's$ can be related to the $L^2(\mathbb{R}^d)$ norm of $f$ as follows. 
\begin{lem}\label{chainlem2}
We have that for $f$ in $\mathcal{A}$ that there are $c_n$ real positive numbers and $\phi_n(t,x)$ as in Lemma \ref{gaussianwavepackets} so that 
\begin{align*}
\|f\|_{L^2(\mathbb{R}^d\,dx)} \leq \left(\frac{\pi}{2}\right)^{\frac{d}{4}}\sum\limits_{|n|\leq N}|c_n|+\eta.
\end{align*}
\end{lem}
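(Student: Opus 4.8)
The plan is to read the coefficients directly off the membership $f\in\mathcal{A}$ and then control the $L^2$ norm of the resulting Gaussian sum by Minkowski's inequality. First I would invoke Definition \ref{as1}, which for $f\in\mathcal{A}$ supplies a fixed $N>2$, an $\eta\in(0,1)$, real positive $\tilde c_n$, and $\tilde a_n\in\mathbb{R}^d$ with
\begin{align*}
\left(\int_{\mathbb{R}^d}\left|f(x)-\sum_{|n|\leq N}\tilde c_n e^{-|x+\tilde a_n|^2}\right|^2\,dx\right)^{1/2}\leq\eta.
\end{align*}
I would set $c_n=\tilde c_n$ and $a_n=\tilde a_n$; with this choice Lemma \ref{gaussianwavepackets} gives $e^{-|x+a_n|^2}=\phi_n(0,x)$, so the $c_n$ and the $\phi_n$ named in the statement are precisely these objects. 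The triangle inequality in $L^2(\mathbb{R}^d)$ then separates off the approximation error,
\begin{align*}
\|f\|_{L^2(\mathbb{R}^d)}\leq\left\|\sum_{|n|\leq N}c_n e^{-|x+a_n|^2}\right\|_{L^2(\mathbb{R}^d)}+\eta,
\end{align*}
which already isolates the additive $\eta$ in the claim.

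Next I would estimate the remaining Gaussian sum term-by-term. Minkowski's inequality yields
\begin{align*}
\left\|\sum_{|n|\leq N}c_n e^{-|x+a_n|^2}\right\|_{L^2(\mathbb{R}^d)}\leq\sum_{|n|\leq N}|c_n|\,\bigl\|e^{-|x+a_n|^2}\bigr\|_{L^2(\mathbb{R}^d)},
\end{align*}
and the single-Gaussian norm is computed using translation invariance of Lebesgue measure together with the product structure of the integrand,
\begin{align*}
\bigl\|e^{-|x+a_n|^2}\bigr\|_{L^2(\mathbb{R}^d)}^2=\int_{\mathbb{R}^d}e^{-2|y|^2}\,dy=\left(\int_{\mathbb{R}}e^{-2s^2}\,ds\right)^d=\left(\frac{\pi}{2}\right)^{d/2},
\end{align*}
so that $\bigl\|e^{-|x+a_n|^2}\bigr\|_{L^2(\mathbb{R}^d)}=(\pi/2)^{d/4}$ independently of $n$. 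Substituting this back and combining with the first display gives exactly
\begin{align*}
\|f\|_{L^2(\mathbb{R}^d)}\leq\left(\frac{\pi}{2}\right)^{d/4}\sum_{|n|\leq N}|c_n|+\eta.
\end{align*}

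I do not expect a genuine obstacle here: the bound is one-sided and rests only on the triangle and Minkowski inequalities plus a textbook Gaussian integral, the sole noteworthy point being that translation invariance forces every term to contribute the identical factor $(\pi/2)^{d/4}$, which is precisely the constant in the statement. The one thing I would be careful to verify is the bookkeeping of notation, namely that the positive coefficients $\tilde c_n$ guaranteed by $\mathcal{A}$ are genuinely the same real positive $c_n$ whose $\ell^1$ norm is later lower-bounded against $\|u\|_{L^2((0,T)\times\omega)}$ via Lemma \ref{chainlemma}; ensuring this alignment is what lets the present lemma chain correctly into the eventual observability estimate.
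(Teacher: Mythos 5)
Your proof is correct and follows essentially the same route as the paper's: apply the triangle inequality to the approximation supplied by the definition of $\mathcal{A}$, then Minkowski's inequality term by term, and conclude with the normalization $\|\phi_n(0,\cdot)\|_{L^2(\mathbb{R}^d)}=\left(\frac{\pi}{2}\right)^{\frac{d}{4}}$. Your write-up is in fact more explicit than the paper's (which compresses the triangle-inequality step into the remark that ``the last inequality comes from assuming $f\in\mathcal{A}$''), but there is no substantive difference.
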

\begin{proof}
Using Minkowski's inequality and the normalization of the $\phi_n(0)'s$ we see 
\begin{align*}
\left(\int\limits_{\mathbb{R}^d}|\sum\limits_{|n|\leq N}|c_n\phi_n(0,x)|^2\,dx\right)^{\frac{1}{2}}\leq \sum\limits_{|n|\leq N}\left(\int\limits_{\mathbb{R}^d}|c_n\phi_n(0,x)|^2\,dx\right)^{\frac{1}{2}}= \left(\frac{\pi}{2}\right)^{\frac{d}{4}}\sum\limits_{|n|\leq N}|c_n|.
\end{align*}
The last inequality comes from assuming that $f\in\mathcal{A}$. 
\end{proof} 
\begin{proof}[Proof of Theorem \ref{main}]
Assume that $f$ satisfies Assumption \ref{as1} so that for some positive real numbers $c_n$ we can write
\begin{align}\label{repg}
f(x)=\sum\limits_{|n|\leq N}c_n\phi_n(x)+\eta(x)
\end{align}
with $\eta(x)$ having $L^2(\mathbb{R}^d)$ norm less than $\eta$. (The equality is understood in the sense of $ L^2(\mathbb{R}^d)$ norms). Moreover from Lemma \ref{gaussianwavepackets} and linearity we have 
\begin{align}\label{reppg}
u(t,x)=\sum\limits_{|n|\leq N}c_n\phi_n(t,x)+\eta(t,x)
\end{align} 
with $\eta(t,x)$ having $L^2(\mathbb{R}^d)$ norm less than $\eta$ for all $0\leq t\leq T$. Now we proceed by a bit of a bootstrap argument. We additionally assume that $a_n=\varepsilon n$ are chosen such that Lemma \ref{lowerinner} holds by the hypothesis on $\varepsilon$.  By application of equalities \eqref{repg} and \eqref{reppg} followed by Lemmata \ref{lowerinner}, \ref{chainlemma}, and \ref{chainlem2} in direct succession we have that 
\begin{align*}
&T^{\frac{1}{2}}\left(\|u \|_{L^2((0,T)\times\omega)}+\eta T\right)\\&
\geq T^{\frac{1}{2}}\left(\int\limits_0^T\int\limits_{\omega}|\sum\limits_{|n|\leq N}c_n\phi_n(t,x)|^2\,dx\,dt\right)^{\frac{1}{2}} \\&
\geq \left(\frac{\pi}{2}\right)^{-\frac{d}{4}}\left(\int\limits_0^T\epsilon(t,R)\delta(t,R_0)\,dt\right)\left(\sum\limits_{|n|\leq N}|c_n|\right) \\&
\geq \left(\frac{\pi}{2}\right)^{-\frac{d}{2}}\left(\int\limits_0^T\epsilon(t,R)\delta(t,R_0)\,dt\right)(\|f\|_{L^2(\mathbb{R}^d)}-\eta). \\&
%\pi^{-\frac{d}{2}}\left(\int\limits_0^T\epsilon(t,R)\delta(t,R_0)\,dt\right)\frac{3\|f\|_{L^2(\mathbb{R}^d, e^{x^2}\,dx)}}{4}.
\end{align*}
This concludes the proof of the theorem. The constant in the inequality is defined as
\begin{align}\label{CT}
&C_T=\left(\frac{\pi}{2}\right)^{\frac{d}{2}}T^{\frac{1}{2}}\left(\int\limits_0^T\epsilon(t,R)\delta(t,R_0)\,dt\right)^{-1}\\& \nonumber
=T^{\frac{1}{2}}\left(\int\limits_0^T\frac{e^{\frac{1}{2}}A^{\frac{d-1}{4}}R_0^{\frac{d-1}{2}}}{\pi^{\frac{d+2}{4}}2^{\frac{d+3}{2}}}e^{-A(R_0^2+R^2)}\,dt\right)^{-1}.
\end{align}
\end{proof}

\begin{proof}[Proof of Theorem \ref{main3}]
Let $a_n$ be in $\mathbb{R}^d_+$ and $c_n\in \mathbb{R}_+$. By the hypothesis on the initial data $u_0(x)$ that it is in $\mathcal{A}$ we have that 
\begin{align}
u_0(x)=\sum\limits_{|n|\leq N}c_ne^{-|x+a_n|^2}=\sum\limits_{|n|\leq N}c_n\phi_n(0,x)+\eta(x)
\end{align}
in the sense of $L^2(\mathbb{R}^d)$ norms for some $\eta(x)$ with norm less than $\eta$. 
This allows us to write
\begin{align}\label{string}
&||u(t,\cdot)||^2_{L^2(\omega)}=\sum\limits_{n,m}c_nc_m\langle \phi_n(t,\cdot),\phi_m(t,\cdot)\rangle_{L^2(\omega)}+\eta'
\end{align}
where $\eta'\leq \eta^2$. 
Let $$|\alpha_N|=\max\limits_{n,m}|a_n-a_m|.$$ Let $a_{n'}$ be such that $$R_1=|a_{n'}|=\max\limits_m|a_m|.$$ We consider $a_{n'}$ as the new origin in the coordinate plane. Under this coordinate change we have that $\phi_n$ becomes $\tilde{\phi}_n$ where $\tilde{\phi}_n(t,x)$ have $\tilde{a}_n$ as their centers. The number $R_2$ we define as $R_2=R_1-\frac{\mathrm{diam}(\Omega)}{2}$ which is the distance from this new center $a_{n'}$ to the side of the ball enclosing $\Omega$. 

\begin{figure}[h!]
 \begin{center}
 \includegraphics*[width=10cm]{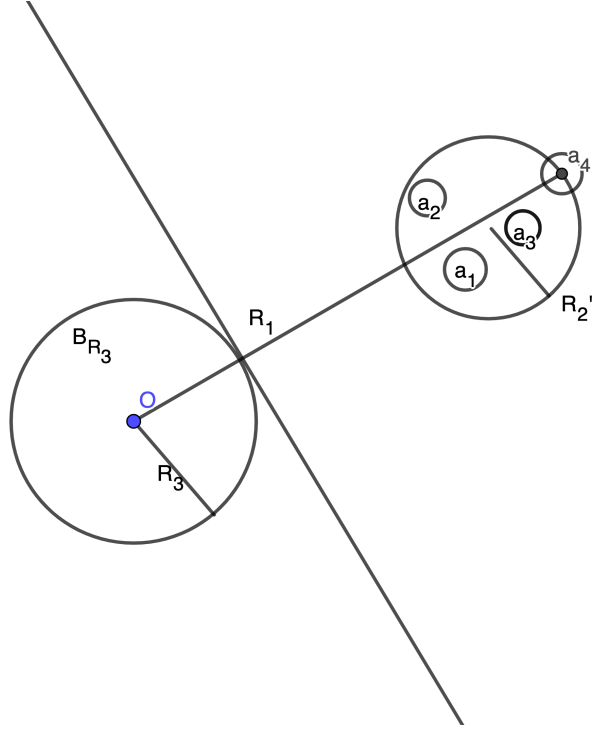}
 \end{center}
 \caption{The Gaussians at time $t=0$ are centered at $a_n$ $n=1,2,3,4$ and have their $L^2(\mathbb{R}^2)$ norm largely within a ball of radius $\frac{1}{\sqrt{2}}$. For $\Omega=B_{R_3}$, then we see if $R_1-R_3=R_2$ and $2R_2'\leq R_2$ these smaller balls do not intersect $B_{R_3}$, or the plane at a distance $R_2$ from $a_4$ containing it. In this case, $R_2'\geq \frac{1}{2}\max\limits_{n,m}|a_n-a_m|+\frac{1}{\sqrt{2}}$, and we should be able to compare the $L^2$ norms of the sum of the Gaussians over the area with the plane removed, to their $L^2(\mathbb{R}^d)$ norms. When the coordinates and radii of the balls are rescaled in time, the same principle still holds which is what is behind the proof of Theorem \ref{main3}.}
\end{figure}

Removing the plane containing the ball gives
\begin{align}\label{lower}
\sum\limits_{n,m}c_nc_m\langle \phi_n(t,\cdot),\phi_m(t,\cdot)\rangle_{L^2(\omega)}\geq \sum\limits_{n,m}c_nc_m\langle \tilde{\phi}_n(t,\cdot),\tilde{\phi}_m(t,\cdot)\rangle_{L^2(\mathbb{R}^d\setminus [R_2,\infty)^d)}.
\end{align}
We will show that
\begin{align}\label{hiB}
&2e|\int\limits_{[R_2,\infty)^d}\tilde{\phi}_n(t,x)\overline{\tilde{\phi}_m(t,x)}\,dx| 
\leq 2e\int\limits_{[R_2,\infty)^d}|\tilde{\phi}_n(t,x)\overline{\tilde{\phi}_m(t,x)}|\,dx\\& \leq \nonumber
\langle \tilde{\phi}_n(0,\cdot),\tilde{\phi}_m(0,\cdot)\rangle_{L^2(\mathbb{R}^d)}=\langle \phi_n(0,\cdot),\phi_m(0,\cdot)\rangle_{L^2(\mathbb{R}^d)}.
\end{align}
The main result will then follow since 
\begin{align}\label{hiB2}
&\sum\limits_{n,m}c_nc_m\langle \tilde{\phi}_n(t,\cdot),\tilde{\phi}_m(t,\cdot)\rangle_{L^2(\mathbb{R}^d\setminus [R_2,\infty)^d)}=\\& \nonumber\sum\limits_{n,m}c_nc_m\langle \tilde{\phi}_n(t,\cdot),\tilde{\phi}_m(t,\cdot)\rangle_{L^2(\mathbb{R}^d)}-\sum\limits_{n,m}c_nc_m\langle \tilde{\phi}_n(t,\cdot),\tilde{\phi}_m(t,\cdot)\rangle_{L^2([R_2,\infty)^d)}
\end{align}
combined with \eqref{lower} and \eqref{hiB} will give the result after integrating all the inequalities over $t$. In order to compute
\begin{align}\label{desired}
\int\limits_{[R_2,\infty)^d}|\tilde{\phi}_n(t,x)\overline{\tilde{\phi}_m(t,x)}|\,dx
\end{align}
we recall that
\begin{align*}
&\int\limits_{R_2}^{\infty} e^{-D_0|x-\tilde{c}|^2-D_0|x-\tilde{d}|^2}\,dx\\&=\left(\frac{\sqrt{\pi}}{2\sqrt{2D_0}}\right)^d\prod\limits_{i=1}^d\left(\mathrm{erfc}\left(\sqrt{2D_0}\left(R_2-\frac{\tilde{c_i}+\tilde{d_i}}{2}\right)\right)\right)e^{-\frac{D_0|\tilde{c}-\tilde{d}|^2}{2}}.
\end{align*}
Let $2D_0= A$ with $A$ as in \eqref{adef} and the vectors $\tilde{c}$ and $\tilde{d}$ we set equal to $\tilde{c}=-\tilde{a}_n/(y_2(t))$ and $\tilde{d}=-\tilde{a}_m/y_2(t)$ to match with \eqref{desired}. Using that $\mathrm{erfc}(x)\leq e^{-x^2}$ for all $x>0$ if we have that
\begin{align}
e^{-2D_0\left(R_2-\frac{\tilde{c_i}+\tilde{d_i}}{2}\right)^2}e^{-\frac{D_0|\tilde{c}_i-\tilde{d}_i|^2}{2}}\leq \sqrt{D_0}e^{-1}e^{-\frac{(a_{n_i}-a_{m_i})^2}{2}}
\end{align}
then \eqref{hiB} follows immediately. Whenever $D_0\geq 1$ we can find $R_2$ such that  
\begin{align}\label{if}
2D_0\left(R_2-\frac{\tilde{c_i}+\tilde{d_i}}{2}\right)^2>1-\log(\sqrt{D_0}).
\end{align}
However it is not guaranteed that $D_0\geq 1$. Therefore we take 
\begin{align*}
4D_0\left(R_2-\frac{\tilde{c_i}+\tilde{d_i}}{2}\right)^2>(a_n-a_m)^2+2-2\log(\sqrt{D_0})
\end{align*}
as our condition instead. We now have $-\tilde{c}=\frac{a_n-R_1}{y_2(t)}$ and $-\tilde{d}=\frac{a_m-R_1}{y_2(t)}$ by definition. This results in
 \begin{align*}
 R_2>\sqrt{\frac{|\alpha_N|^2+2-\log(\sqrt{D_0})}{4D_0}}+\frac{|\alpha_N|}{y_2(t)}
 \end{align*}
 We note that at $t=0, D_0=1, y_2(0)=1$ from \eqref{Riccati} and this would mean the expression for $R_2$ found from \eqref{if} would exactly coincide with the value found pictorially in Figure 1. 
 
 Plugging in $D_0\neq 0$ in terms of the Riccati equation solutions we see that this implies
 \begin{align}
 R_1>\sqrt{\frac{|\alpha_N|^2+2-2\log(\sqrt{|y_2(t)|(1+16(y_3(t))^2)^{-1}})}{4(y_2(t))^2(1+16(y_3(t))^2)^{-1}}}+\frac{|\alpha_N|}{y_2(t)}+\frac{\mathrm{diam}(\Omega)}{2}
 \end{align} 
 Taking the maximum of the right hand side in $t\in [0,T)$ with $T$ as in Definition \ref{flow}, this finishes the proof by combining \eqref{hiB} and \eqref{hiB2} and integrating over time. In this case $C_T=\frac{e}{(e-1)T}$ in Theorem \ref{main3}. 
\end{proof}

\begin{proof}[Demonstration of Counterexample \ref{main2}]
The proof consists of two counterexamples. Let $\delta>0$ be a constant. 
We consider as the initial data to \eqref{nonauto} the Gaussian 
\begin{align*}
e^{-|x+\delta|^2}
\end{align*}  
By direct computation in the notation of Lemma \ref{lowerinner} we obtain
\begin{align*}
&\int\limits_{[R,\infty)^d}|u(t,x)|^2\,dx=\left(\frac{\pi}{4A}\right)^{\frac{d}{2}}|\gamma(t)|^2 \left(\mathrm{erfc}\left(\sqrt{A}\left(\frac{\delta}{y_2}+R\right)\right)\right)^d
\end{align*} 
as $\delta\rightarrow \infty$, this goes to $0$ if $y_2>0$. As a result, no lower bound on $\|u\|_{L^2((0,T)\times\omega)}$ exists. Therefore it is necessary that $y_2$ change sign if $\omega$ is consisting of the right or left half space minus a compact set. The same set up can be use if $\omega$ is instead a bounded domain, which violates the condition of Definition \ref{geof}. For fixed $\delta$ and some finite $R_1,R_2$
\begin{align*}
&\int\limits_{\omega}|u(t,x)|^2\,dx\leq 
\int\limits_{[R_1,R_2]^d}|u(t,x)|^2\,dx \\&\leq 
\left(\frac{\pi}{4A}\right)^{\frac{d}{2}}|\gamma(t)|^2 
\left(\mathrm{erf}\left(\sqrt{A}\left(\frac{\delta}{y_2}+R_1\right)\right)-\mathrm{erf}\left(\sqrt{A}\left(\frac{\delta}{y_2}-R_2\right)\right)\right)^d
\end{align*}
which as $\delta$ goes to infinity is again $0$.
\end{proof} 

\section{Applications}\label{apps}

Many of the examples in this section are taken from the physics literature, see \cite{kim} for more background on these examples and others. The collection of these examples is the proof of Proposition \ref{maincorollary}. 
\begin{example}[The harmonic oscillator and free Schr\"odinger revisited]
For the free Schr\"odinger equation, $T_D=\infty$. Using \eqref{lowerbd}, the conditions for \eqref{req} and \eqref{R1} for $y_2(t)=1$ and $y_3(t)=-t$ are seen hold under the stronger conditions
\begin{align}
\varepsilon N((\sqrt{1+16t^2})e^{\frac{R_0^2+NR_0}{8t}}+\left(4N+4\right)e^{\frac{N^2}{8t}})\leq \frac{1}{10}e^{-\frac{4R_0^2}{1+16t^2}}
\end{align}
and
\begin{align}
R_2\geq \frac{\sqrt{(|\alpha_N|^2+2+2\mathrm{ln}\sqrt{1+16t^2})(1+16t^2)}}{2}+|\alpha_N|
\end{align}
respectively for all finite $t$. Notice that in \eqref{adef}
\begin{align}
A=\frac{2}{\sqrt{1+16t^2}} \quad A\leq 2
\end{align}
and therefore $\min\limits_{0\leq t\leq T}\epsilon(t,R)$ and $\min\limits_{0\leq t\leq T}\delta(t,R_0)$ as defined by \eqref{defepst} and \eqref{delta} are positive, so $C_T$ in \eqref{CT} is bounded. \\

For $\kappa_1(t)=\kappa_2(t)=\frac{1}{2}$, the standard harmonic oscillator, we have the hypothesis of Theorem 2 are satisfied for all time $0\leq t<\frac{\pi}{2}$. Notice that the construction and stability estimates could be extended past the cusp in this case using exercise 11.1 p.129 in \cite{grigissjostrand} to include all times $t$ in $(0,\pi)$ which is sufficient since it is known the operator has periodic $L^2((0,T)\times\omega)$ norm. 
Using \eqref{lowerbd}, notice that for this example the condition \eqref{req} would certainly be satisfied for $\varepsilon$ satisfying
\begin{align}
2\varepsilon N\left(e^{2R_0^2+2NR_0}+(2N+2)e^{2N^2}\right)\leq \frac{1}{10}e^{-4R_0^2}
\end{align}
uniformly for $t\in [0,\frac{\pi}{2})$. Also $R_1$ satisfies the inequality \eqref{R1} if
\begin{align}
R_2\geq 2\sqrt{|\alpha_N|^2+6}+|\alpha_N|
\end{align}
which is finite for finite $|\alpha_N|$.  Notice that in \eqref{adef}
\begin{align}
A=\frac{2}{\sqrt{1+15\sin^2(t)}} \quad\quad \frac{1}{2}\leq A\leq 2
\end{align}
and therefore $\min\limits_{0\leq t\leq T}\epsilon(t,R)$ and $\min\limits_{0\leq t\leq T}\delta(t,R_0)$ are positive, so $C_T$ in \eqref{CT} is bounded.
\end{example}

\begin{example}[Potentials]
Consider the Hamiltonian with $\sigma$ a constant
\begin{align*}
H(t)=\frac{|p|^2}{2(t+d)^a}+\frac{\sigma^2(t+d)^b|x|^2}{2}, \quad a,b,d>0
\end{align*}
The equations of motion give 
\begin{align*}
\frac{d^2x(t)}{dt^2}+\frac{a}{t+d}\frac{dx}{dt}+\sigma^2(t+d)^{b-a}x(t)=0. 
\end{align*}
The solutions are 
\begin{align*}
x(t)=C_1(t+d)^{\alpha}J_{\nu}(z)+C_2\quad (t+d)^{\alpha}Y_{\nu}(z) 
\end{align*}
with 
\begin{align*}
z=\beta (t+d)^{\gamma} \quad \alpha=(1-a)/2\quad \gamma=1+(b-a)/2 \quad \beta=\sigma/|\gamma| \quad \nu=|\alpha/\gamma|.
\end{align*}
where $C_1$, $C_2$ are chosen so $x(0)=1, x'(0)=0$. However if $\gamma=0$, which occurs when $a=b+2$ then in this case there are power law solutions 
\begin{align*}
x(t)=C_1(t+d)^{\alpha_-}+C_2(t+d)^{\alpha_+}
\end{align*} 
where 
\begin{align*}
\alpha_{\pm}=\frac{-(a-1)\pm\sqrt{(a-1)^2-4\sigma^2}}{2}
\end{align*}
unless $a=1\pm 2\sigma$. 
\end{example}

\begin{example}[Caldirola-Kanai oscillator \cite{cald,kanai}]\label{CK}
The Caldirola-Kanai oscillator which has an exponentially increasing mass and frequency, has been studied through various methods is one of the most typical time-dependent quantum systems whose exact quantum states are known. 
Let $\kappa_1(t)=\frac{e^{-2at}}{2}, \kappa_2(t)=\frac{e^{2at}\sigma^2}{2}$, $a>0$, corresponding to the Hamiltonian 
\begin{align*}
H=\frac{|p|^2}{2e^{2at}}+\frac{\sigma^2e^{2at}|x|^2}{2}
\end{align*} then the solutions to the Hamiltonian flow are given by 
\begin{align*}
&x(t)=e^{-at}\cos(\Lambda t)+\frac{a}{\Lambda}e^{-at}\sin(\Lambda t)\\&
p(t)=-\frac{a^2}{\Lambda}e^{at}\sin(\Lambda t)- \Lambda e^{at}\sin(\Lambda t)
\end{align*} 
where $\Lambda=\sqrt{\sigma^2-a^2}$, with $\sigma^2-a^2>0$.  If $\Lambda$ real, then $T_D=\frac{\pi}{2\Lambda}$, for example.  The conditions \eqref{req} and \eqref{R1} are then also nonempty using trigonometric identities.

When $\Lambda$ is imaginary, we can recalculate. Let $\lambda=\sqrt{a^2-\sigma^2}$ where $a^2-\sigma^2>0$. Then 
\begin{align}
&x(t)=e^{-at}\cosh(\lambda t)+\frac{a}{\lambda}e^{-at}\sinh(\lambda t)\\&
p(t)=-\frac{a^2}{\lambda}e^{at}\sinh(\lambda t)+ \Lambda e^{at}\sinh(\lambda t)
\end{align}
The equations for the FIO coefficients are then 
\begin{align*}
&y_1(t)=\frac{e^{2at}}{2}\frac{\left(\lambda-\frac{a^2}{\lambda}\right)\sinh(\lambda t)}{\cosh(\lambda t)+\frac{a}{\lambda}\sinh(\lambda t)}\\&
y_2(t)=\frac{\lambda e^{at}}{\lambda \cosh(\lambda t)+a\sinh(\lambda t)} \\&
y_3(t)=-\frac{\sinh (\lambda t)}{2a\sinh(\lambda t)+2\lambda \cosh(\lambda t)} \\&
\end{align*}
As you can see $a(t)=\sqrt{y_2(t)}$, and $T_D=\infty$. In this case we have that 
\begin{align}
&\frac{\lambda e^{2at}}{a(1+e^{at})}\leq |y_2(t)|\leq 2e^{2at}\\&
\frac{1}{4a(e^{\lambda t}+1)}\leq |y_3(t)|\leq \frac{e^{-\lambda t}}{2\lambda}
\end{align}
and
\begin{align}
A=\frac{\lambda e^{at}}{\sqrt{(a\sinh(\lambda t)+\lambda \cosh(\lambda t))^2+4\sinh^2(\lambda t)}}
\end{align}
so there exists constants $C_1, C_2$ depending on $a$ and $\lambda$ such that 
\begin{align}
C_1e^{-\lambda t}\leq A\leq C_2e^{at}
\end{align}
the conditions \eqref{req} and \eqref{R1} are true for a nonempty set of $\epsilon_0$ and $R_1$ for finite $T$.  
\end{example} 
\begin{proof}[Proof of Proposition \ref{maincorollary}]
The proof of Proposition \ref{maincorollary} follows directly from the examples above. 
\end{proof}

\section{Conclusions}\label{conclusions}
We have the following Corollary
\begin{cor}\label{linftyobs}
The function $\phi(x)$ constructed in Lemma \ref{stepg} shifted a distance from $\Omega$ specified in Theorem \ref{main3} with $2M=|\alpha_N|$ is $L^2\textendash L^{\infty}$ observable if 
\begin{align}
\eta=\left(2e^{-\frac{M^2}{4}}+M\Delta x\right)^d<\frac{e-1}{4e}{\|\phi\|_{L^2(\mathbb{R}^d)}}.
\end{align}
\end{cor}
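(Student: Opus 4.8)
The plan is to reduce the corollary to a single application of Theorem \ref{main3} followed by an absorption of the two error terms into the $L^2(\mathbb{R}^d)$ norm of the datum. First I would record that the extension $\phi$ built in Lemma \ref{stepg} already arrives as a sum of real Gaussians $\sum_{|n|\le N}\tilde c_n e^{-|x+\tilde a_n|^2}$ with \emph{strictly positive} weights $\tilde c_n=\prod_i \Delta x_{n_i}/\sqrt{\pi}$; converting the sup-norm bound of Lemma \ref{stepg} into an $L^2(\mathbb{R}^d)$ bound over the compact support of $\phi$, and controlling the Gaussian tails outside it, shows that the $L^2$ approximation error is at most $\eta=(2e^{-M^2/4}+M\Delta x)^d$. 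Together with its bounded $B$-norm this places $\phi$ in the admissible class $\mathcal{A}$ of Definition \ref{as1} with exactly this value of $\eta$. Since the initial datum for \eqref{nonauto} is $u_0=\phi$ and translation is an $L^2$ isometry, $\|u_0\|_{L^2(\mathbb{R}^d)}=\|\phi\|_{L^2(\mathbb{R}^d)}$.

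Next I would arrange the geometry so that Theorem \ref{main3} applies. The center-spread is $|\alpha_N|=\max_{n,m}|\tilde a_n-\tilde a_m|=2M$, and Lemma \ref{stepg} permits translating $\phi$ so that every $\tilde a_n\in\mathbb{R}^d_+$ and $\max_n|\tilde a_n|$ is as large as we wish; I would choose the shift so that the placement inequality \eqref{R1} holds. Because the right-hand side of \eqref{R1} depends on $t$ only through $y_2(t),y_3(t)$, the uniform two-sided bounds of Lemma \ref{Glemma} (equivalently the explicit Riccati estimates of Section \ref{apps}) guarantee that \eqref{R1} can be met simultaneously for every $t\in[0,T)$ by a single translate. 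With the hypotheses in force, Theorem \ref{main3} yields, with $C_T=\frac{e}{(e-1)T}$,
\[
\|\phi\|_{L^2(\mathbb{R}^d)}-\eta\le C_T\left(\|u\|_{L^2((0,T)\times\omega)}+T\eta\right).
\]

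The decisive step is then the absorption, which is what forces the constant $\tfrac{e-1}{4e}$. Since $C_TT=\frac{e}{e-1}$, the hypothesis $\eta<\frac{e-1}{4e}\|\phi\|_{L^2(\mathbb{R}^d)}$ is precisely the statement $C_TT\eta<\tfrac14\|\phi\|_{L^2(\mathbb{R}^d)}$, and as $C_TT>1$ it also gives $\eta<\tfrac14\|\phi\|_{L^2(\mathbb{R}^d)}$; hence $\eta+C_TT\eta<\tfrac12\|\phi\|_{L^2(\mathbb{R}^d)}$. Rewriting the conclusion of Theorem \ref{main3} as
\[
\|\phi\|_{L^2(\mathbb{R}^d)}-\eta-C_TT\eta\le C_T\|u\|_{L^2((0,T)\times\omega)}
\]
and moving the error to the left leaves
\[
\tfrac12\|\phi\|_{L^2(\mathbb{R}^d)}\le C_T\|u\|_{L^2((0,T)\times\omega)}=\frac{e}{(e-1)T}\|u\|_{L^2((0,T)\times\omega)},
\]
a genuine observability inequality with no residual term. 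This is exactly the $L^2$--$L^\infty$ observability asserted, the error $\eta$ being governed by the $L^\infty$ size of the data through Lemma \ref{stepg}.

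I expect the only step requiring real care, rather than new analysis, to be the uniform-in-$t$ placement: one must verify that a \emph{single} translate of $\phi$ makes \eqref{R1} hold on all of $[0,T)$ while leaving both $\|\phi\|_{L^2(\mathbb{R}^d)}$ and $\eta$ unchanged (translation alters neither, which is the key point that keeps the threshold $\frac{e-1}{4e}\|\phi\|_{L^2(\mathbb{R}^d)}$ meaningful). This amounts to checking that the right-hand side of \eqref{R1} stays bounded in $t$, which is supplied by the explicit bounds on $y_2(t),y_3(t)$ computed for each example in Section \ref{apps}; no estimate beyond those already established is needed.
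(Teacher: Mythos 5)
Your proposal is correct and follows essentially the same route as the paper: verify $\phi\in\mathcal{A}$ with $\eta=\left(2e^{-M^2/4}+M\Delta x\right)^d$, use $2M=|\alpha_N|$ to fix the shift required by Theorem \ref{main3}, apply that theorem with $C_T=\frac{e}{(e-1)T}$, and absorb the two $\eta$ terms into $\|\phi\|_{L^2(\mathbb{R}^d)}$. Your absorption arithmetic (reading the hypothesis as $C_TT\eta<\tfrac14\|\phi\|_{L^2(\mathbb{R}^d)}$, hence $\eta+C_TT\eta<\tfrac12\|\phi\|_{L^2(\mathbb{R}^d)}$) is in fact spelled out more cleanly than in the paper's own terse proof and matches the stated constant $\frac{e-1}{4e}$ exactly.
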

\begin{proof}
We have from the proof of Theorem \ref{main3} that $\eta$ must satisfy the following 
 $$\eta=\left(2e^{-\frac{M^2}{4}}+M\Delta x\right)^d\min\{(C_TT),1\}<\frac{1}{4}{\|\phi\|_{L^2(\mathbb{R}^d)}}$$ $C_T=\frac{e}{(e-1)T}$ to be $L^2\textendash L^{\infty}$ observable, that is for the $\eta$ to be folded into the left hand side of \eqref{obs}. By the construction of $\phi$, $2M=|\alpha_N|$ which gives the distance $R_1$ required by Theorem \ref{main3}. 
\end{proof}
More general $\phi\in\mathcal{A}$ can also be constructed. Because the FIO construction presented in Lemma \ref{FIOlem} here also gives an exact construction for initial data which consists of Hermite functions, there is room for future development and examples of the construction presented here.

\end{document}